\newtheorem{lemma}{Lemma}
\newtheorem{theorem}[lemma]{Theorem}
\newtheorem{prop}[lemma]{Proposition}
\newtheorem{corollary}[lemma]{Corollary}
\numberwithin{equation}{section} \numberwithin{lemma}{section}
\newcommand{\kommentar}[1]{}
\newcommand{\Q}{\mathbb{Q}}
\newcommand{\Z}{\mathbb{Z}}
\newcommand{\C}{\mathbb{C}}
\newcommand{\sgn}{{\text{sgn}}}
\newcommand{\be}{\begin{equation}}
\newcommand{\ee}{\end{equation}}
\newcommand*{\reff}[1]{(\ref{#1})}
\newcommand{\kr}[2]{\pr{\fr{#1}{#2}}}
\newcommand{\pr}[1]{\!\left(#1\right)}
\newcommand{\fr}[2]{\frac{#1}{#2}}
\newcommand{\lr}[1]{\left|\;\!#1\;\!\right|}
\newtheorem{proposition}[lemma]{Proposition}
\begin{document}

\title
  [LOWER ORDER TERMS]
  {Lower order terms for the moments of symplectic and orthogonal families of $L$-functions}
\author[Goulden]{Ian P. Goulden}\email{ipgoulden@uwaterloo.ca}

\author[Huynh]{Duc Khiem Huynh}\email{dkhuynhms@gmail.com}

\author[Rishikesh]{Rishikesh}\email{rishikes@gmail.com}

\author[Rubinstein]{Michael O. Rubinstein}
\thanks{Support for work on this paper was provided by the
National Science Foundation under award DMS-0757627 (FRG grant),
and two NSERC Discovery Grants.}
\email{mrubinst@uwaterloo.ca}

\address{Department of Pure Mathematics, University of Waterloo, Waterloo, ON, N2L 3G1, Canada}

\date{\today}

\thispagestyle{empty} \vspace{.5cm}
\begin{abstract}
We derive formulas for the terms in the conjectured asymptotic expansions of
the moments, at the central point, of quadratic Dirichlet $L$-functions,
$L(1/2,\chi_d)$, and also of the $L$-functions associated to quadratic twists
of an elliptic curve over $\Q$. In so doing, we are led to study determinants
of binomial coefficients of the form $\det \left( \binom{2k-i-\lambda_{k-i+1}}{2k-2j}\right)$.
\end{abstract}

\maketitle
\tableofcontents

\section{Introduction}

In this paper we describe formulas, derived from conjectures of Conrey, Farmer,
Keating, Rubinstein, and Snaith \cite{CFKRS}, for the moments of quadratic
Dirichlet $L$-functions at the central point, and the moments of $L$-functions
associated to quadratic twists of an elliptic curve.

We are motivated to study moments in these two families of $L$-functions
because of their apparent connection to the moments of characteristic
polynomials of unitary symplectic and orthogonal matrices.

Montgomery was the first to discover a link between an $L$-function and
characteristic polynomials of unitary matrices~\cite{M}. He
computed, with restrictions on the allowed test functions, the limiting pair
correlation of the zeros of the Riemann zeta function, and found that it coincides with
the average pair correlation of the eigenvalues of large random (according to Haar measure)
unitary matrices that had been computed earlier by Dyson~\cite{Dy}. Odlyzko
later confirmed this agreement numerically, without restriction~\cite{O}.
Rudnick and Sarnak generalized Montgomery's result to higher correlations and
to any primitive $L$-function~\cite{RS}.

Katz and Sarnak then made precise connections between
various families of $L$-functions and matrices from specific classical compact
groups, based on results linking the density of zeros $L$-functions and
analogous zeta functions over function fields, to the eigenvalue densities of
random matrices in the classical compact groups~\cite{KS}~\cite{KS2}. For
instance, their work showed a statistical connection between the zeros of
quadratic Dirichlet $L$-functions and eigenvalues of unitary symplectic
matrices, and between the zeros of $L$-functions of quadratic twists of an
elliptic curve and eigenvalues of orthogonal matrices. The papers~\cite{R}
and~\cite{R2}, provided further theoretical and numerical support for the
relevance of these matrix groups to our two families of $L$-functions.

Subsequently, Keating and Snaith were able to predict the leading term in the
asymptotics for the moments of the Riemann zeta function on the critical line by carrying
out an analogous computation for random unitary matrices~\cite{KeS}. In a
companion paper~\cite{KeS2}, they also conjectured the leading term in the
asymptotics for the moments in our two families of $L$-functions by computing the moments of
the characteristic polynomials of random unitary symplectic and even orthogonal
matrices. See also the paper of Conrey and Farmer~\cite{CF} which contains some
arithmetic information needed for the Keating and Snaith approach to moments.

The method of Keating and Snaith for predicting moments of $L$-functions relies
on computations in random matrix theory, for example it uses Weyl's integration
formula and the Selberg integral, and some guesswork to make the heuristic leap
to number theoretic moments. It also has the drawback of requiring, as input,
the relevant classical compact group as predicted by Katz and Sarnak.

On the other hand, the approach, referred to above, of Conrey, Farmer, Keating,
Rubinstein, and Snaith does not rely on random matrix theory to derive,
heuristically, the moments of various families of $L$-functions. Their method
is based strictly on number theoretic techniques involving the approximate
functional equation, the traditional equation that is used to study moments of
$L$-functions~\cite{T}~\cite{J}. While random matrix theory is not
needed in their approach, the formulas that their heuristic approach yields for
$L$-functions have provable analogues in random matrix theory. CFKRS were also
able to make progress by introducing `shifts' into the moments, a strategy that
was inspired by Motohashi's evaluation of the fourth moment of the zeta function
~\cite{Mot} and also by an analogous  problem in random matrix theory.
Their method, therefore, produces an answer that can be compared against
various moment computations in random matrix theory, and, instead of using the
predictions of Katz and Sarnak, it provides evidence for them. Furthermore, the
conjectured formulas of CFKRS go well beyond the leading asymptotic of Keating
and Snaith, providing, implicitly, a full asymptotic expansion for a variety of
$L$-function moment problems. Because their conjectured formulas provide a full asymptotic
expansion for moments, one can test them numerically by comparing the predicted moments
against those computed from $L$-function data. See for instance~\cite{CFKRS}
\cite{CFKRS2} \cite{AR} \cite{RY}.

Our goal is to turn the implicit formulas of CFKRS
into asymptotic expansions with explicitly given coefficients. We elaborate
on the CFKRS formulas, for the family of quadratic Dirichlet $L$-functions, in
Section~\ref{sec:CFKRS quadratic} and for quadratic elliptic curve
$L$-functions in Section~\ref{sec:elliptic}.

Besides the approaches of Keating and Snaith and of CFKRS, two additional
methods have yielded interesting results for the moments of $L$-functions.

Gonek, Hughes, and Keating~\cite{GHK}, and Bui and Keating~\cite{BK} use the
explicit formula for an $L$-function to realize the $L$-function as a hybrid
between partial Hadamard and Euler products. They assume statistical
independence between these two products and study the moments of the partial
Euler product using number theoretic heuristics. The moments of the partial
Hadamard product are studied by modeling the zeros of the Hadamard product
based on the predicted classical compact group. Their approach therefore
suffers the same disadvantage of the Keating and Snaith method of requiring the
predictions of Katz and Sarnak as input. The main advantage of their method
over the Keating and Snaith method is that it explains, rather than guesses,
the appearance of an `arithmetic factor' in moment formulas for $L$-functions.
Another disadvantage is that it only seems to correctly predict the leading
asymptotic for the $L$-function moments that they consider, and thus only
agrees with the CFKRS prediction to leading order. Presumably this is because
their assumptions are too strong, for example the statistical independence
between the partial Hadamard and Euler products, and their use of matrix
eigenvalues to model the partial Hadamard product.

Another method for studying moments of $L$-functions has been developed by
Diaconu, Goldfeld, and Hoffstein~\cite{GH}~\cite{DGH} and uses the theory of
multiple Dirichlet series. It has the advantage of proving asymptotic formulas
for some $L$-function moments, for example the first three moments of quadratic
Dirichlet $L$-functions at the central point. However, it has the disadvantage
of involving an elaborate sieving process (in the case of quadratic
characters), that makes it unwieldy for producing explicit formulas for the
asymptotic expansion. Interestingly, their method predicts the existence of
additional lower order terms of smaller magnitude that go beyond those of the
asymptotic expansion of CFKRS. See the paper of DGH as well as that of
Zhang~\cite{Z}, and Alderson and Rubinstein~\cite{AR} for discussions and
computations regarding these additional lower terms.

\subsection{The CFKRS conjecture for $L(1/2,\chi_d)$}
\label{sec:CFKRS quadratic}

We begin by describing the CFKRS conjecture for quadratic Dirichlet $L$-functions.
Let $D$ be a squarefree integer, $D\neq 0,1$, and let $K= \Q(\sqrt{D})$
be the corresponding quadratic field. The fundamental discriminant $d$ of $K$ equals
$D$ if $D = 1 \mod 4$, and $4D$ if $D = 2,3 \mod 4$.
Let $\chi_d(n)$ be the Kronecker symbol $\kr{d}{n}$,
and $L\pr{s,\chi_d}$ the quadratic Dirichlet $L$-function given by the
Dirichlet series
\begin{equation}
    L\pr{s,\chi_d} = \sum_{n=1}^\infty \fr{\chi_d(n)}{n^s}, \qquad \Re(s)>0,
\end{equation}
satisfying the functional equation
\begin{equation}
    L\pr{s,\chi_d} = \lr{d}^{\fr{1}{2} - s} X(s,a) L\pr{1-s, \chi_d},
\end{equation}
where
\begin{equation}
    \label{eq:X}
    X(s,a) =
    \pi^{s-\fr{1}{2}} \fr{\Gamma\pr{\fr{1-s+ a}{2}}}{\Gamma\pr{\fr{s+a}{2}}}, \qquad a =
    \begin{cases} 0 \quad &\text{if $d> 0$,} \\ 1 \quad &\text{if $d<0$.} \end{cases}
\end{equation}

Let $S(X)$ denote the set of fundamental discriminants with $|d|<X$. The Gamma factor
in functional equation for $L(s,\chi_d)$ depends on whether $d<0$ or $d>0$. Thus, define
further
\begin{eqnarray}
    S_+(X) &=& \{ d \in S(X) : d>0\} \notag \\
    S_-(X) &=& \{ d \in S(X) : d<0\},
\end{eqnarray}
to be, respectively, the sets of positive and negative fundamental discriminants with $|d|<X$.

CFKRS conjectured \cite{CFKRS} the asymptotic expansion:
\begin{equation}
    \label{eq: moment asympt}
    \sum_{d \in S\pm(X)} L\pr{1/2,\chi_d}^k \sim
    \frac{3}{\pi^2} X  \mathcal Q_\pm(k,\log X),
\end{equation}
where $\mathcal Q_+(k,x)$ and $\mathcal Q_-(k,x)$ are polynomials of degree
$k(k+1)/2$ in $x$ that we will describe below. The fraction $3/\pi^2$ accounts
for the density of fundamental discriminants amongst all the integers.

The polynomial $\mathcal Q_\pm(k,\log X)$ is expressed
in terms of a more fundamental polynomial $Q_\pm(k,x)$ of the same degree
that captures the moments locally:
\begin{equation}
      \mathcal Q_\pm(k,\log{X}) = \frac{1}{X} \int_1^X Q_\pm(k,\log{t}) dt.
  \end{equation}
One of the main achievements of CFKRS was to give a general recipe/heuristic
for producing formulas for moments of various families of $L$-functions. Their
formula (see Conjecture 1.5.3 in \cite{CFKRS}) for the polynomial
$Q_\pm(k,x)$ is given implicitly in terms of a $k$-fold multivariate residue:
\begin{equation}
    \label{eq:Qkresidue}
    Q_\pm(k,x)= \frac{(-1)^{k(k-1)/2} 2^k}{k!} \frac {1}{(2\pi i)^k}
    \oint \cdots \oint
    \frac{G_\pm(z_1,\ldots,z_k)\Delta(z_1^2,\ldots,z_k^2)^2}{\prod_{j=1}^k z_j^{2k-1}}
    e^{\frac x 2 \sum_{j=1}^k z_j} \,  dz_1\ldots dz_k,
  \end{equation}
where $\Delta(w_1,\dotsc,w_k)$ is the Vandermonde determinant
\begin{equation}
    \label{eq:1}
    \Delta(w_1,\dotsc,w_k) = \det(w_i^{j-1})_{k\times k}
    =\prod_{1\leq i < j \leq k} (w_j -w_i),
\end{equation}
and 
\begin{equation}
    \label{eq:Gdefinition}
    G_\pm(z_1,\ldots,z_k) = A_k(z_1,\ldots,z_k) \prod_{j=1}^k X(\frac 1 2 +z_j,
    a)^{-1/2} \prod_{1\leq i\leq j \leq k} \zeta(1+z_i+z_j).
\end{equation}
Here, $a=0$ for $G_+$ and $a=1$ for $G_-$, $X(s,a)$ is given in~\eqref{eq:X},
and $A_k$ equals the Euler product, absolutely convergent in a neighbourhood
of $(z_1,\ldots,z_k)=(0,\ldots,0)$, defined by
\begin{multline}
    \label{eq:Ak}
    \index{$A_k$}A_k(z_1, \ldots, z_k) = \prod_p \prod_{1\leq i\leq j \leq k}
    \left(1-\frac{1}{p^{1+z_i+z_j}} \right)\\
    \times \left(\frac 1 2 \left( \prod_{j=1}^{k}\left(1-\frac 1
          {p^{\frac 1 2 +z_j}} \right)^{-1} +
        \prod_{j=1}^{k}\left(1+\frac 1{p^{\frac 1 2 +z_j}}
        \right)^{-1} \right) +\frac 1 p \right) \left(1+\frac 1 p
    \right)^{-1}.
\end{multline}

One advantage of equation~\eqref{eq:Qkresidue} is that it allows one to easily
see that $Q_\pm(k,x)$ is a polynomial of degree $k(k+1)/2$ in $x$. That is
because the denominator of the multivariate residue picks up terms in the
numerator involving $\prod_{j=1}^k z_j^{2k-2}$, which is of degree $2k(k-1)$.
Now, the factor $\Delta(z_1^2,\ldots,z_k^2)^2$ is a homogeneous polynomial,
also of degree $2k(k-1)$. However, the factor $G_\pm(z_1,\ldots,z_k)$ cancels
$k(k+1)/2$ of the factors of the Vandermonde, because each $\zeta(1+z_i+z_j)$
has a Laurent expansion that begins $1/(z_i+z_j)$ coming from the pole at $s=1$
of $\zeta(s)$. Therefore, in considering the multivariate Taylor expansion of
the numerator about $z_1=\ldots=z_k=0$, we only need to take terms in the
series
\begin{equation}
    \label{eq:exp}
    \exp\left(\frac x 2 \sum_{j=1}^k z_j\right) = \sum_{0}^\infty \frac{x^n}{2^nn!} (z_1+\ldots +z_k)^n
\end{equation}
up to $n=k(k+1)/2$. Hence, in the $x$ aspect, the $k$-fold
residue only involves terms up to $x^{k(k+1)/2}$.

Equation~\eqref{eq:Qkresidue} has the disadvantage of expressing $Q_\pm(k,x)$ implicitly.
Let us therefore write
\begin{equation}
    \label{eq:Qexpansion}
    Q_\pm(k,x) = c_\pm(0,k) x^{k(k+1)/2} + c_\pm(1,k) x^{k(k+1)/2 - 1} + \ldots + c_\pm(k(k+1)/2,k).
\end{equation}
Our main result, described in the following theorem, gives explicit formulae for the
coefficients $c_{\pm}(r,k)$. We first define
\begin{equation}
    \label{eq:a_k}
    a_k := A_k(0,\ldots,0) =
    \prod_p 
    \frac{\left(1 - \frac{1}{p}\right)^{\frac{k(k+1)}{2}}}
       {1 + \frac{1}{p}}
    \left(
       \frac{
           \left( 1 - \frac{1}{\sqrt{p}}\right)^{-k}
           +
           \left(1 + \frac{1}{\sqrt{p}}\right)^{-k}
       }{2} + 
       \frac{1}{p}
    \right).
\end{equation}

\begin{theorem}\label{thm:maintheorem}
In \reff{eq:Qexpansion}, the leading coefficient $c_\pm(0,k)$ of $Q_+(k,x)$ or
$Q_-(k,x)$ are both equal to
\begin{equation}
  \label{eq:146}
  \frac{  a_k}{ 2^{k} }
  \prod_{j=0}^{k-1}\frac{(2j)!}{(k+j)!}=:
  c(0,k),
\end{equation}
and, for given $r\geq 1$, we have  
\begin{equation}
  \label{eq:147}
    \index{$c_\pm(r,k)$}c_\pm(r,k) = c(0,k)  \sum_{|\lambda|=r} b^\pm_\lambda(k ) N_\lambda(k),
\end{equation}
where $N_\lambda(k)$ is a polynomial in $k$ of degree at most $2|\lambda|$, defined in~\eqref{eq:N},
$a_k$ is defined in~\eqref{eq:a_k}, and the
$b^{\pm}_\lambda(k)$'s are  the Taylor coefficients of a holomorphic function, defined in
\eqref{eq:powerseries} and \eqref{eq:145}.
The sum is over all partitions $|\lambda| = r$, with $\sum \lambda_i = r$ and
$\lambda_1\geq \lambda_2\geq \ldots > 0$.
\end{theorem}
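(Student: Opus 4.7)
The plan is to work directly with the $k$-fold contour integral~\eqref{eq:Qkresidue}, isolate the singular behavior of $G_\pm$ at the origin, and read off the coefficients $c_\pm(r,k)$ from the resulting Taylor/Laurent data. Since $A_k$ is holomorphic at the origin, each archimedean factor $X(\tfrac{1}{2}+z_j,a)^{-1/2}$ is entire there, and each $\zeta(1+z_i+z_j)$ has a simple pole with residue one along $z_i+z_j=0$, I would factor
\[
    G_\pm(z_1,\ldots,z_k) = \frac{\tilde H_\pm(z_1,\ldots,z_k)}{\prod_{1\leq i\leq j\leq k}(z_i+z_j)},
\]
where $\tilde H_\pm$ is a holomorphic symmetric function with $\tilde H_\pm(0,\ldots,0) = a_k$ (using $X(\tfrac{1}{2},a)=1$). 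Combining this with the identities $\Delta(z_1^2,\ldots,z_k^2)^2 = \Delta(z_1,\ldots,z_k)^2\prod_{i<j}(z_i+z_j)^2$ and $\prod_{1\leq i\leq j\leq k}(z_i+z_j) = 2^k\prod_j z_j\prod_{i<j}(z_i+z_j)$, the integrand of~\eqref{eq:Qkresidue} simplifies to
\[
    \frac{\tilde H_\pm(z)\,\Delta(z_1,\ldots,z_k)^2\prod_{i<j}(z_i+z_j)}{2^k\prod_j z_j^{2k}}\,e^{\frac{x}{2}\sum_j z_j}.
\]

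Next, since the displayed kernel (including the exponential) is symmetric in $(z_1,\ldots,z_k)$, only the symmetric Taylor expansion of $\tilde H_\pm$ contributes; because $\tilde H_\pm$ is itself symmetric, I would expand it in the normalized symmetric polynomial basis $\{P_\lambda\}$ used in~\eqref{eq:powerseries} and~\eqref{eq:145}, so that the Taylor coefficients are exactly the $b^\pm_\lambda(k)$, with the empty partition contributing $a_k$. Extracting the coefficient of $x^{k(k+1)/2-r}$ then amounts to pairing, for each partition $\lambda$ with $|\lambda|=r$, the basis element $P_\lambda(z)$ carrying $b^\pm_\lambda(k)$ with the degree $k(k+1)/2-r$ term $(\tfrac{x}{2})^{k(k+1)/2-r}(z_1+\cdots+z_k)^{k(k+1)/2-r}/(k(k+1)/2-r)!$ of the exponential, leaving a universal combinatorial residue
\[
    R_\lambda(k) := \frac{1}{(2\pi i)^k}\oint\!\cdots\!\oint
    \frac{\Delta(z)^2\prod_{i<j}(z_i+z_j)\,P_\lambda(z)\,(z_1+\cdots+z_k)^{k(k+1)/2-r}}{\prod_j z_j^{2k}}\,dz_1\cdots dz_k.
\]
Normalizing $R_\lambda(k)$ against the $r=0$ case and combining with the outer prefactor $(-1)^{k(k-1)/2}2^k/k!$ should yield the asserted form $c_\pm(r,k) = c(0,k)\sum_{|\lambda|=r}b^\pm_\lambda(k)N_\lambda(k)$ of~\eqref{eq:147}.

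To evaluate $R_\lambda(k)$, I would rewrite $\Delta(z)^2\prod_{i<j}(z_i+z_j)$ as a single Leibniz-type determinant in $(z_1,\ldots,z_k)$, expand $(z_1+\cdots+z_k)^{k(k+1)/2-r}$ multinomially, and use linearity of the determinant to reduce each term to the coefficient of $\prod_j z_j^{2k-1}$ in a sum of multinomial products; this collapses the whole sum to a determinant of binomial coefficients of the shape $\det\bigl(\binom{2k-i-\lambda_{k-i+1}}{2k-2j}\bigr)_{1\leq i,j\leq k}$ highlighted in the abstract. For $\lambda=\emptyset$, this evaluation is of Selberg type and returns the closed form $\prod_{j=0}^{k-1}(2j)!/(k+j)!$, which together with the outer prefactor and $\tilde H_\pm(0)=a_k$ produces the formula~\eqref{eq:146} for $c(0,k)$. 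The main obstacle will be proving that this binomial determinant, after all multinomial sums are carried out, is polynomial in $k$ of degree at most $2|\lambda|$ as asserted: each individual entry $\binom{2k-i-\lambda_{k-i+1}}{2k-2j}$ is itself polynomial in $k$ of degree as large as $2k-2$, so the global bound $2|\lambda|$ must come from cancellation between rows orchestrated by the shape of $\lambda$, and establishing this, together with the precise identification of the normalized residue with $N_\lambda(k)$ in~\eqref{eq:N}, will be the technical heart of the proof.
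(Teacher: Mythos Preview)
Your high-level strategy---factor $G_\pm$ to expose the simple poles, expand the holomorphic numerator in monomial symmetric polynomials to produce the $b^\pm_\lambda(k)$, and extract the $x^{k(k+1)/2-r}$ coefficient as a universal residue---is exactly what the paper does. The paper computes the residue by a different device (introduce auxiliary variables $x_1,\ldots,x_k$, replace the exponential by $e^{\sum x_jz_j}$, pull the differential operator $\Delta(\partial_x)\Delta(\partial_x^2)$ outside, and evaluate at $x_j=1$), but your multinomial expansion would reach the same place.

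The genuine gap is your claim that the residue ``collapses the whole sum to a determinant of binomial coefficients of the shape $\det\binom{2k-i-\lambda_{k-i+1}}{2k-2j}$.'' It does not collapse to a single such determinant. What emerges (see~\eqref{eq:76}--\eqref{eq:N}) is a \emph{sum} over the distinct permutations $\mathbf{u}=(u_k,\ldots,u_1)$ of $(\lambda_1,\ldots,\lambda_k)$, restricted so that all nonzero entries sit in the last $|\lambda|$ positions; each such $\mathbf{u}$ contributes a sign times $D_{\alpha(\mathbf{u})}(k)$ (where $\alpha(\mathbf{u})$ is the partition obtained by sorting $\mathbf{u}$ via adjacent row swaps) multiplied by a product of falling factorials $(k+i(\mathbf{u})-1)_{u_{i(\mathbf{u})}}\cdots(k)_{u_1}$ of total degree $|\lambda|$. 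These falling factorials appear because when you scale rows by $(2k-i)!$ to convert reciprocal factorials into binomial coefficients, the shifted bottom rows leave uncancelled factors---your sketch misses this. Consequently the polynomiality of $N_\lambda(k)$ does not come from row cancellation inside a single binomial determinant; it comes from (i) Theorem~\ref{thm:i2} and Corollary~\ref{cor:1}, which show via Schur-function identities that $D_\alpha(k)=2^{\binom{k}{2}-|\alpha|}P_\alpha(k)$ with $P_\alpha$ a polynomial of degree exactly $|\alpha|$, so each summand in~\eqref{eq:N} has degree $2|\lambda|$, and (ii) the divisibility statement Corollary~\ref{cor:2}, which combined with the factor $k(k-1)\cdots(k-\alpha_1+1)$ hidden in the falling factorials shows every summand vanishes for $0\le k<\max(l(\alpha),\alpha_1)$, allowing the a priori size constraint on $k$ to be dropped so that the sum is over a set of $\mathbf{u}$ independent of $k$. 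Without these two ingredients you cannot conclude that~\eqref{eq:N} defines a polynomial in $k$, and your proposal does not supply them.
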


We remark that formula~\eqref{eq:146} for the leading term agrees with the
prediction of Keating and Snaith. See (34),(45), and (47) of Keating and Snaith
\cite{KS2} (replacing $\log{D}$ by $x$ in their equation (45)). Their
derivation is heuristic and based on the Selberg integral. Compare also to the
leading term of equation (1.5.17) of~\cite{CFKRS}, with $N=x/2$ in that
equation. To verify the agreement between these, one can check, inductively,
that:
\begin{equation}
    \label{eq:comparison to KS}
    \frac{1}{2^k}\prod_{j=0}^{k-1}\frac{(2j)!}{(k+j)!}
    = \frac{1}{2^{k(k+1)/2}} \prod_{j=1}^k \frac{1}{(2j-1)!!}
    = \prod_{j=1}^k \frac{j!}{(2j)!}.
\end{equation}
Note that~\eqref{eq:147} is analogous to formula (1.16) of~\cite{CFKRS} which
provides a formula for the coefficients of the moment polynomials of the
Riemann zeta function. See also Dehaye's paper~\cite{D}, also for the Riemann zeta
function, where he gives a combinatorial formula for the analogue of our
polynomial $N_\lambda(k)$.

We work out examples, for $r=1$ and $r=2$.
Table~\ref{tab:N_lambda} provides $N_{(1)}(k) = k(k+1)$,
$N_{(1,1)}(k)=\frac 1 2 k (k-1)(k+1)(k+2)$, and $N_{(2)}(k)=0$. Thus,
\begin{align}
    \label{eq:c1}
    c_\pm(1,k) &= c(0,k) b^\pm_{(1)}(k) N_{(1)}(k) \notag \\
    &= \frac{a_k}{2^k}\prod_{j=0}^{k-1}\frac{(2j)!}{(k+j)!}\  k(k+1) b^\pm_{(1)}(k).
\end{align}
and
\begin{align}
    c_\pm(2,k) &= c(0,k)\left( b^\pm_{(1,1)}(k)N_{(1,1)}(k) +
    b^\pm_{(2)}(k)N_{(2)}(k)\right) \notag \\
    &=\frac{a_k}{2^k}\prod_{j=0}^{k-1}\frac{(2j)!}{(k+j)!}
    \times  \frac 1 2 k(k-1)(k+1)(k+2) \  b^\pm_{(1,1)}(k).
\end{align}
Let
\begin{equation}
    \label{eq:zeta gammas}
    \zeta(1+s) = \frac{1}{s} + \sum_{n=0}^\infty (-1)^n \gamma_n \frac{s^n}{n!},
\end{equation}
be the Laurent expansion about 0 of $\zeta(1+s)$ ($\gamma_0$ is Euler's constant),
and define
\begin{equation}
    \label{eq:f}
    f_j(p) :=
    \frac{(-1)^{j}(p^{1/2}-1)^{-j-k} + (p^{1/2}+1)^{-j-k}}
    {(p^{1/2}-1)^{-k}+(p^{1/2}+1)^{-k}+ 2 p^{-1-k/2}},
\end{equation}

Formulas for the coefficients $b_{(1)}^\pm(k)$, $b_{(1,1)}^\pm(k)$ can be derived
using the method described in Section~\ref{sec:b}, and are given by
\begin{equation}
    \label{eq:b1}
    b_{(1)}^\pm(k) = -\frac 1 2 \log \pi + \frac{1}{2} \frac{\Gamma'}{\Gamma} \left(1/4+a/2 \right)
    + (k+1)\gamma_0
    +\sum_p
    \left(
        \frac{(k+1)}{p-1} + f_1(p)
        \right) \log{p},
    \end{equation}
where $a=0$ for $b^+_{(1)}$, and $a=1$ for $b^-_{(1)}$, and
\begin{multline}
    \label{eq:b11}
    b^\pm_{(1,1)}(k) = b^\pm_{(1)}(k)^2
    -\gamma_0^2-2\gamma_1
    -\sum_{p}
    \left(
        \frac{p}{(p-1)^2}+f_1(p)^2-f_2(p)
    \right) \log(p)^2.
\end{multline}

In Section~\ref{sec:reform-probl} we derive
formula~\eqref{eq:146} for the leading coefficient of $Q_{\pm}(k,x)$. Our tools
are then applied, in Section~\ref{sec:further-lower-order}, to the general term
$c_\pm(r,k)$, where we obtain a formula for $N_\lambda(k)$ expressed as a sum
of determinants of the form:
\begin{equation}
  \label{eq:i1}
  D_\lambda(k)=
  \det \left( \binom{2k-i-\lambda_{k-i+1}}{2k-2j}\right)_{1\leq i,j\leq k},
\end{equation}
where $\lambda=(\lambda_1,\dotsc,\lambda_m)$ is a partition with length
$l(\lambda)\leq k$ (see Section~\ref{sec:symmetric} for definitions).

In Section~\ref{sec:determinant} we derive some
interesting formulas for these determinants. To describe our formulas,
let $y=(y_1,\dotsc,y_m)$.
We define the {\em coefficient operator} $[y^\beta]$ on the set of formal
multivariate Taylor or Laurent series in $y$, which picks the coefficient of
the monomial $y^\beta$ in the series. More precisely, if
\begin{equation}
    \label{eq:i3}
    f(y_1,\dotsc,y_m)=\sum_{r_1,\dotsc,r_m \in \Z}
    a_{r_1,\dotsc,r_m}y_1^{r_1}\dots y_m^{r_m},
\end{equation}
 define 
\begin{equation}
   \label{eq:i55}
[y_1^{u_1}\dots
y_m^{u_m}]f   =a_{u_1,\dotsc,u_m}.
\end{equation}

We prove the following Theorem.

\begin{theorem}
\label{thm:i2} Let $\lambda$ be a partition and $\mu$ be the
conjugate partition. Let $m=l(\lambda)$, and $n=l(\mu) = \lambda_1$.
For $k\geq \max(l(\lambda),\lambda_1)$, we have
\begin{multline}
    \label{eq:i4}
    D_\lambda(k)= 2^{\binom{k}{2} -|\lambda|}\\
    \times [y_1^{\lambda_1+m-1}\dots
    y_m^{\lambda_m}]\left( \prod_{1\leq i < j \leq m} (y_i
    -y_j)(1-y_i-y_j) \prod_{l=1}^m(1-y_l)^{-k-m}
    \right),
\end{multline}
and also
\begin{multline}
    \label{eq:i6}
    D_\lambda(k)= 2^{\binom{k}{2} -|\lambda|}\\
    \times [z_1^{\mu_1+n-1}\dots z_n^{\mu_n}]
    \left( \prod_{1\leq i < j \leq n} (z_i -z_j)(1+z_i+z_j) \prod_{l=1}^n(1+2z_l)(1+z_l)^{k-n}
    \right).
\end{multline}
\end{theorem}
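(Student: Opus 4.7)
The plan is to rewrite $D_\lambda(k)$ as a $k$-variable coefficient extraction, identify an alternant inside as a Schur-polynomial expression, and then apply algebraic manipulations and a change of variables to recast it in the $m$-variable form \eqref{eq:i4}. First, reindexing $i\mapsto k+1-i$ and $j\mapsto k+1-j$ (whose signs cancel) converts $D_\lambda(k)$ to $\det(\binom{k+i-1-\lambda_i}{2j-2})_{i,j}$. Applying $\binom{N}{M}=[w^M](1+w)^N$ with a separate dummy variable for each row, together with the standard coefficient-of-an-antisymmetrization trick, yields
\begin{equation*}
D_\lambda(k) = [w_1^0\,w_2^2\cdots w_k^{2k-2}]\det\bigl((1+w_j)^{k+i-1-\lambda_i}\bigr)_{1\le i,j\le k}.
\end{equation*}

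After transposing and reversing columns, the inner determinant is a Schur alternant $a_{\nu+\delta}(1+w)$, where $\nu=(k-\lambda_k,\dotsc,k-\lambda_1)$ is the complement of $\lambda$ in the $k\times k$ box, and hence equals $s_\nu(1+w)\prod_{i<j}(w_i-w_j)$ up to a sign that cancels against the reversal of extraction indices. Invoking the complement identity $s_\nu(x)=(x_1\cdots x_k)^k\,s_\lambda(x_1^{-1},\dotsc,x_k^{-1})$ and then substituting $w_l=y_l/(1-y_l)$ (so that $1+w_l=(1-y_l)^{-1}$ and $1/(1+w_l)=1-y_l$) transforms everything into a $k$-fold contour integral in the $y_l$, featuring $s_\lambda(1-y_1,\dotsc,1-y_k)$, squared Vandermondes, and the symmetric factor $\prod_{i<j}(y_i+y_j-2y_iy_j)$. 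The key algebraic identity
\begin{equation*}
\prod_{i<j}(y_i-y_j)(1-y_i-y_j) = \prod_{i<j}(u_i-u_j),\qquad u_l := y_l(1-y_l),
\end{equation*}
then identifies the $y$-variable product that appears in \eqref{eq:i4}.

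To reduce from a $k$-fold to an $m$-fold coefficient extraction, I would apply the Jacobi-Trudi identity $s_\lambda=\det(h_{\lambda_i-i+j})_{1\le i,j\le m}$ together with the generating identity $\sum_n h_n(1-y)\,t^n = \prod_l (1-t(1-y_l))^{-1}$, which permits the $k-m$ ``extra'' variables to be explicitly summed out. Simultaneously tracking the Jacobian factors $(1-y_l)^{-2}\,dy_l$, the homogeneity degree $|\lambda|$ of $s_\lambda$, and the various powers of $(1-y_l)$ accumulated in the transformation, the overall scalar coalesces to $2^{\binom{k}{2}-|\lambda|}$ as claimed in \eqref{eq:i4}.

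The second formula \eqref{eq:i6} follows by an analogous argument applied with the conjugate partition $\mu=\lambda'$ of length $n=\lambda_1$. The structural role played by $\sum_n h_n(x)\,t^n=\prod(1-tx_l)^{-1}$ in the derivation above is now taken by $\sum_n e_n(x)\,t^n=\prod(1+tx_l)$; this shift, equivalent to applying the involution $\omega$ that interchanges $s_\lambda$ with $s_{\lambda'}$, produces the factors $(1+z_i+z_j)$ in place of $(1-y_i-y_j)$ and $(1+2z_l)(1+z_l)^{k-n}$ in place of $(1-y_l)^{-k-m}$. The principal obstacle is the reduction from $k$ to $m$ variables while accurately accounting for every Jacobian contribution and every homogeneity scaling; once the bookkeeping is carried out carefully, the prefactor $2^{\binom{k}{2}-|\lambda|}$ emerges and no further ideas are needed.
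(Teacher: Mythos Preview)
Your skeleton matches the paper's: both start by rewriting $D_\lambda(k)$ as a $k$-variable coefficient extraction containing a Schur-type alternant, and both use a Jacobi--Trudi identity to introduce auxiliary variables whose number matches $l(\lambda)$ or $l(\mu)$. The paper, however, proceeds in the opposite order: it proves \eqref{eq:i6} first (via \emph{dual} Jacobi--Trudi, so with $e_r$ and $n=\lambda_1$ auxiliary variables), and only afterwards obtains \eqref{eq:i4} by applying the involution $\omega$ to the symmetric-function identity underlying \eqref{eq:i6}. Your detour through the complement identity $s_\nu(x)=(x_1\cdots x_k)^k s_\lambda(x^{-1})$ and the substitution $w_l=y_l/(1-y_l)$ is avoided in the paper, which works directly with $s_\lambda\bigl(\tfrac{1}{1+x_1},\dotsc,\tfrac{1}{1+x_k}\bigr)$ from the outset.

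There is a genuine gap at the step you yourself call the ``principal obstacle,'' the reduction from $k$ variables to $m$ (or $n$). You invoke Jacobi--Trudi and the generating function $\sum h_n(1-y)t^n=\prod_l(1-t(1-y_l))^{-1}$, but this only \emph{introduces} the $m$ auxiliary variables; it does not eliminate the $k$ original ones. In the paper the elimination is accomplished by the dual Cauchy identity: after writing $s_\lambda$ as a $t$-coefficient, the resulting double product $\prod_{i,l}(1+\tfrac{x_l}{1+t_i})$ is expanded as $\sum_\gamma s_\gamma(x)\,s_{\gamma'}(\cdots)$, and then the extraction $[x^{2\delta_k}]\,a_{\delta_k}(x)\,(\cdots)$ kills every term except $\gamma=\delta_k$, collapsing the $x$-variables entirely and leaving only the $n$ (or $m$) auxiliary ones. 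Your proposal does not supply this mechanism. Your identity $\prod_{i<j}(y_i-y_j)(1-y_i-y_j)=\prod_{i<j}(u_i-u_j)$ with $u_l=y_l(1-y_l)$ is correct and attractive, but it still sits inside a $k$-fold extraction and does not by itself reduce the number of variables; likewise the appearance of ``squared Vandermondes'' is asserted rather than derived. To close the argument along your lines you would need either the Cauchy identity (for $h_r$) in place of the paper's dual Cauchy, or some equivalent device that picks out a single Schur term from the $k$-variable expansion.
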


\begin{corollary}
\label{cor:1}
Let $\lambda$ be a partition, with $l(\lambda)=m$. There is a polynomial $P_\lambda(k)$,
integer valued at integers,
of degree $|\lambda|$ such that for $k\geq \max(l(\lambda),\lambda_1)$,
\begin{equation}
    \label{eq:ii1}
    D_\lambda(k)= 2^{\binom k 2 - |\lambda|} \times \ P_\lambda(k).
\end{equation}
The leading coefficient of $P_\lambda(k)$ is
\begin{equation}
    \label{eq:2}
    \frac{
        \prod_{1\leq i < j \leq m } (\lambda_i-\lambda_j-i+j)
    }
    {
        \prod_{1\leq i \leq m }(\lambda_i + m - i)!
    }
    = \chi^\lambda(1)/|\lambda|!,
\end{equation}
where $\chi^\lambda(1)$ is the degree of the irreducible representation of the symmetric group
$S_{|\lambda|}$ indexed by $\lambda$.
In particular,
\begin{equation}
    \label{eq:ii2}
    D_0(k)= 2^{\binom k 2}.
\end{equation}
where $D_0(k)$ is the determinant associated to the empty partition.

\end{corollary}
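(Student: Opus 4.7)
The plan is to use formula~\eqref{eq:i4} of Theorem~\ref{thm:i2} to \emph{define}
\[
P_\lambda(k) := [y_1^{\lambda_1+m-1}\cdots y_m^{\lambda_m}]\left(\prod_{1\leq i<j\leq m}(y_i-y_j)(1-y_i-y_j)\prod_{l=1}^m(1-y_l)^{-k-m}\right),
\]
so that the factorization \eqref{eq:ii1} is immediate for $k\geq\max(l(\lambda),\lambda_1)$. Split the expression inside the coefficient operator as $F(y)G(y;k)$ with $F(y)=\prod_{i<j}(y_i-y_j)(1-y_i-y_j)$ a polynomial independent of $k$ and $G(y;k)=\prod_l(1-y_l)^{-k-m}$. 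Expanding $(1-y)^{-k-m}=\sum_{n\geq 0}\binom{k+m+n-1}{n}y^n$, each $\binom{k+m+n-1}{n}$ is a polynomial in $k$ of degree exactly $n$ with leading coefficient $1/n!$, so the coefficient of any fixed monomial in $G(y;k)$ is polynomial in $k$; convolving with $F$ shows $P_\lambda(k)$ is polynomial in $k$. Since the lowest-degree monomials of $F$ have total degree $\binom{m}{2}$ (coming from $\prod_{i<j}(y_i-y_j)$ paired with the constants from each $(1-y_i-y_j)$) and the total $y$-degree extracted is $\sum_{i=1}^m(\lambda_i+m-i)=|\lambda|+\binom{m}{2}$, this gives $\deg_k P_\lambda \leq |\lambda|$.

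For the leading coefficient, only these minimal-degree terms in $F$ contribute. Using
$\prod_{i<j}(y_i-y_j)=(-1)^{\binom{m}{2}}\sum_{\sigma\in S_m}\sgn(\sigma)\prod_l y_l^{\sigma(l)-1}$ and summing over $\sigma$, the top-degree coefficient of $P_\lambda$ equals
\[
(-1)^{\binom{m}{2}}\det\left(\frac{1}{(\lambda_i+m-i-j+1)!}\right)_{1\leq i,j\leq m}.
\]
Setting $b_i=\lambda_i+m-i$ and factoring $1/b_i!$ out of row $i$ produces the falling-factorial matrix $(b_i^{\underline{j-1}})$, whose determinant equals the Vandermonde $\det(b_i^{j-1})=\prod_{i<j}(b_j-b_i)$. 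Since $b_j-b_i=-(\lambda_i-\lambda_j-i+j)$ for $i<j$, this contributes a sign $(-1)^{\binom{m}{2}}$ that cancels the external one, leaving
\[
\frac{\prod_{1\leq i<j\leq m}(\lambda_i-\lambda_j-i+j)}{\prod_{1\leq i\leq m}(\lambda_i+m-i)!},
\]
which equals $\chi^\lambda(1)/|\lambda|!$ by Frobenius' determinantal formula for the dimension of the irreducible $S_{|\lambda|}$-representation indexed by $\lambda$. This is strictly positive, so $\deg_k P_\lambda=|\lambda|$ exactly.

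For integrality, observe that for every integer $k\geq\max(l(\lambda),\lambda_1)$ the series $(1-y)^{-k-m}$ has integer Taylor coefficients, so the coefficient extraction defining $P_\lambda(k)$ returns an integer; a polynomial integer-valued on an infinite set of integers is integer-valued on all of $\Z$ (standard, via Newton's expansion in the basis $\binom{k}{j}$). For the empty partition $\lambda=\emptyset$, we have $m=0$, so both products in \eqref{eq:i4} are empty, the coefficient operator returns $1$, and \eqref{eq:ii2} follows. The only nontrivial step is the determinant evaluation in the leading-coefficient computation, but this reduces cleanly to a Vandermonde after the row factoring above.
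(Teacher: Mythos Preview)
Your proof is correct and follows essentially the same route as the paper's: both use formula~\eqref{eq:i4} to define $P_\lambda(k)$, expand $(1-y_l)^{-k-m}$ to see polynomiality and bound the degree, and extract the leading coefficient by taking only the Vandermonde factor $\prod_{i<j}(y_i-y_j)$ from $F$ together with the top $k$-power $1/n!$ from each $\binom{k+m+n-1}{n}$. The paper phrases this last step as replacing $\prod_l(1-y_l)^{-k-m}$ by $\exp(y_1+\cdots+y_m)$ and then cites Macdonald for the identity $\det\bigl(1/(\lambda_i-i+j)!\bigr)=\chi^\lambda(1)/|\lambda|!$, whereas you carry out the determinant evaluation explicitly by factoring $1/b_i!$ from each row and reducing the falling-factorial matrix to a Vandermonde; this is a nice self-contained alternative to the citation but not a genuinely different argument.
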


Table \ref{tab:plambda} gives a list of the polynomials $P_\lambda(k)$
for partitions up to weight $7$. Observe, in the table, that
$P_\lambda(k)$  often has many linear factors. This fact plays a role in our formula
for $N_\lambda(k)$ so we encode it in the following corollary.

\begin{corollary}
  \label{cor:2}
  Let $\lambda$ be a partition. Then $P_\lambda(k)$ is divisible by
  \begin{equation}
      \label{eq:divisibility}
      (k-\lambda_1)(k-\lambda_1-1)\ldots(k-l(\lambda)+1)
      \times (k + \lambda_1)(k + \lambda_1 - 1) \ldots (k + l( \lambda )),
  \end{equation}
  where we take the first product to be 1 if $\lambda_1\geq l(\lambda)$,
  and the second product to be 1 if $\lambda_1 < l( \lambda )$.
\end{corollary}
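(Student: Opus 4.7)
The plan is to exhibit the claimed factors as explicit integer roots of $P_\lambda(k)$; since $P_\lambda$ is a polynomial by Corollary~\ref{cor:1}, the divisibility follows immediately. I first note that each formula of Theorem~\ref{thm:i2} is a polynomial identity in $k$: the right side is a coefficient extraction whose $k$-dependence enters only through the polynomial-in-$k$ coefficients of $(1-y_l)^{-k-m}$ or $(1+z_l)^{k-n}$, so agreement on the range $k \geq \max(l(\lambda),\lambda_1)$ forces identity of polynomials, and hence both formulas evaluate $P_\lambda(k)$ at every $k \in \CC$.

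Writing $m = l(\lambda)$ and $n = \lambda_1$, the two cases of the corollary are handled by the two formulas respectively. In the case $\lambda_1 \geq l(\lambda)$, I would use formula~\eqref{eq:i4} and substitute $k = -m - s$ for an integer $s \geq 0$, which turns $(1-y_l)^{-k-m}$ into the polynomial $(1-y_l)^s$. A simple count bounds the $y_1$-degree of the resulting expression by $2(m-1) + s$: the $m-1$ pairs of factors $(y_1-y_j)(1-y_1-y_j)$ each contribute $y_1$-degree $2$, while $(1-y_1)^s$ contributes $s$, and no other factors involve $y_1$. Since the target monomial carries $y_1^{\lambda_1 + m - 1}$, the coefficient must vanish whenever $s < \lambda_1 - m + 1$, giving the $n - m + 1$ roots $k = -m, -m - 1, \ldots, -\lambda_1$ that furnish the second product in~\eqref{eq:divisibility}.

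The complementary case $\lambda_1 < l(\lambda)$ runs in parallel using formula~\eqref{eq:i6} with $k = n + s$. The same single-variable count bounds the $z_1$-degree by $2(n-1) + 1 + s$, where the extra $1$ comes from $(1+2z_1)$; comparing to the target $z_1$-exponent $\mu_1 + n - 1 = m + n - 1$ (using $\mu_1 = m$) forces vanishing for $s = 0, \ldots, m - n - 1$, which yields exactly the first product. The argument is essentially routine degree bookkeeping, and no serious obstacle is anticipated; the only slightly subtle point is justifying the polynomial extension of Theorem~\ref{thm:i2} to the values of $k$ considered here (which lie outside its originally stated range), but this is immediate from the manifestly polynomial-in-$k$ structure of both sides.
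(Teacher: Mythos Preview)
Your approach is essentially the same as the paper's: both arguments use formula~\eqref{eq:i6} to exhibit the factors in the first product and formula~\eqref{eq:i4} for the second, by analyzing the degree in the single variable $z_1$ (resp.\ $y_1$) of the expression inside the coefficient operator. The execution differs only cosmetically: the paper expands $(1+z_1)^{k-\lambda_1}$ as a binomial series in $k$, splits the $z_1^{\mu_1+n-1}$ coefficient into a finite sum over $j$, and observes that every resulting binomial coefficient $\binom{k-\lambda_1}{l(\lambda)+\lambda_1-1-j}$ (viewed as a polynomial in $k$) is divisible by the common factor~\eqref{eq:z_1 coeff}; you instead specialize $k$ to the integer values $n+s$ (resp.\ $-m-s$), turning the power into a genuine polynomial of degree $s$, and then compare the total $z_1$-degree (resp.\ $y_1$-degree) to the target exponent. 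Both are correct; your version is a bit more streamlined, while the paper's version sidesteps the need to justify extending Theorem~\ref{thm:i2} beyond its stated range, since it works directly with the polynomial-in-$k$ binomial expressions.
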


Finally, in Section~\ref{sec:elliptic} we discuss the application of our
techniques to the related problem of the moments of the $L$-functions associated
to quadratic twists of an elliptic curve.

\subsection{Symmetric function theory}
\label{sec:symmetric}

We collect here some definitions and results from the theory of symmetric functions
that we use in our paper.
The details can be found in \cite[Chapter 1]{macdonald_symmetric_1995}. We have
used the notations of \cite{macdonald_symmetric_1995}.

A {\em partition} $\lambda$ is a sequence of non negative integers
$(\lambda_1,\lambda_2,\dotsc)$ such that
\begin{equation} \label{eq:i30}
    \lambda_1\geq \lambda_2 \geq \cdots,
\end{equation} and only finitely many $\lambda_i$s are non zero.

The {\em length} of the partition $\lambda$ is defined to be the number of non zero
$\lambda_i$s. We denote it by $l(\lambda)$.  The {\em weight} of a
partition $\lambda$, denoted by  $|\lambda|$ is
\begin{equation}
  \label{eq:i31}
  |\lambda|=\sum_{i\geq 1} \lambda_i.
\end{equation}
The {\em diagram} of a partition is the set of points
\begin{equation}
  \label{eq:i32}
  \{(i,j) \, | \, 1\leq i \leq l(\lambda) ,\ 1\leq j \leq \lambda_i\}.
\end{equation}
The \emph{conjugate partition} $\lambda'$ of a partition $\lambda$ is the
partition  whose diagram is
\begin{equation}
  \label{eq:i33}
  \{(i,j)\, |\, (j,i) \textrm{ is in the diagram of } \lambda\}.
\end{equation}
Equivalently, the conjugate partition  of $\lambda$ is a partition
$\lambda'=(\lambda_1',\lambda_2',\dotsc)$ where
\begin{equation}
  \label{eq:i34}
  \lambda_i' = \# \{ \lambda_j\, |\, \lambda_j \geq i\}.
\end{equation}

The symmetric group $S_n$ acts on the polynomial ring $\mathbb
Z[x_1,\dotsc,x_n]$ by permuting the independent variables
$x_1,\dotsc,x_n$. The \emph{ring of symmetric polynomials} 
in $n$-variables, $\Lambda_n$, is the set of polynomials in $\mathbb Z[x_1,\dotsc,x_n]$ which are
invariant under this action of $S_n$. The ring $\Lambda_n$ is  a
graded ring:
\begin{equation}
  \label{eq:i35}
  \Lambda_n = \bigoplus_{k\geq 0}  \Lambda^k_n,
\end{equation}
where $\Lambda_n^k$ is the set of homogeneous symmetric polynomials of
degree $k$.

For $m>n$, there is a ring homomorphism
\begin{equation}
  \label{eq:i36}
  \rho_{m,n}: \mathbb Z[x_1,\dotsc,x_m] \to \mathbb Z[x_1,\dotsc,x_n],
\end{equation}
where $\rho_{m,n}(x_i)=x_i$ for $i\leq n$, and $\rho_{m,n}(x_i)=0$ for
$i>n$. This restricts to a map
\begin{equation}
  \label{eq:i37}
\rho_{m,n}:  \Lambda^k_m \to \Lambda_n^k.
\end{equation}
The maps given by \eqref{eq:i37} define an inverse system. Let
\begin{equation}
  \label{eq:i38}
  \Lambda^k = \varprojlim \Lambda^k_n,
\end{equation}
and
\begin{equation}
  \label{eq:i39}
  \Lambda=\bigoplus_{k\geq 0} \Lambda^k.
\end{equation}
The ring $\Lambda$ is called the \emph{ring of symmetric functions}. This is
a graded ring. The definition of $\Lambda$ gives us maps
\begin{equation}
  \label{eq:i40}
  \rho_n: \Lambda \to \Lambda_n.
\end{equation}

In this paper, we shall use four $\mathbb Z$-bases, parametrized by
partitions, of the ring $\Lambda$: the monomial symmetric functions
($m_\lambda$), elementary  symmetric functions ($e_\lambda$), complete
symmetric functions ($h_\lambda$) and the Schur symmetric functions
($s_\lambda$). In addition, we shall be using power symmetric
functions ($p_\lambda$). The power symmetric functions form a $\mathbb
Q$ basis of $\Lambda\otimes_{\mathbb Z}\mathbb Q$. We shall use the
same symbols to denote their image under $\rho_n$ in $\Lambda_n$.

Given $\alpha=(\alpha_1,\dotsc,\alpha_n)$, we write $x^\alpha$ to
denote $x_1^{\alpha_1}\cdots x_n^{\alpha_n}$. Let
$\lambda$ be a partition of length less than or equal to $n$. We
define the \emph{monomial symmetric function} $m_\lambda$ by its image
under $\rho_n$ for every $n$. If $n\geq l(\lambda)$, then
\begin{equation}
  \label{eq:i41}
  m_\lambda(x_1,\dotsc,x_n)= \sum_\alpha x^\alpha,
\end{equation}
where the $\alpha$ ranges  over distinct permutations of
$(\lambda_1,\dotsc,\lambda_n)$. If $l(\lambda)>n$, then
$m_\lambda(x_1,\dotsc,x_n)=0$. For the only partition of $0$, the empty
partition, we define $m_0=1$.

Let $r\geq 0$ be an integer. The \emph{elementary symmetric function} $e_r
\in \Lambda$ is given by
\begin{equation}
  \label{eq:i42}
  e_r=\sum_{1\leq i_1< i_2<\cdots<i_r} x_{i_1}\dots x_{i_r} = m_{(1,\ldots,1)},
\end{equation}
and $e_0=1$.
For a partition $\lambda$, we define
\begin{equation}
  \label{eq:i43}
  e_\lambda=e_{\lambda_1}e_{\lambda_2}\dots.
\end{equation}
The generating function for $e_r$ is
\begin{equation}
  \label{eq:i44}
  E(t)=\sum_{r\geq 0} e_rt^r = \prod_{i\geq 1}(1+x_i t).
\end{equation}

Let $r\geq 0$ be an integer. The \emph{complete symmetric function}
$h_r$ is defined to be
\begin{equation}
  \label{eq:i45}
  h_r=\sum_{|\lambda|=r} m_\lambda.
\end{equation}
Given a partition $\lambda$, we define
\begin{equation}
  \label{eq:i46}
  h_\lambda=h_{\lambda_1}h_{\lambda_2}\dots.
\end{equation}
The generating function for $h_r$ is 
\begin{equation}
  \label{eq:i47}
  H(t)=\sum_{r\geq 0} h_r t^r = \prod_{i\geq 1} (1-x_it)^{-1}.
\end{equation}

Equations \eqref{eq:i44} and \eqref{eq:i47} give us the identity,
\begin{equation}
  \label{eq:i48}
  H(t) E(-t)=1.
\end{equation}

For $r\geq 1$, the \emph{power symmetric function} $p_r$ is defined as
\begin{equation}
  \label{eq:i49}
  p_r = \sum_{i\geq 1} x_i^r = m_{(r)}.
\end{equation}
For  a partition $\lambda$, we define
\begin{equation}
  \label{eq:i50}
  p_\lambda=p_{\lambda_1}p_{\lambda_2}\dots.
\end{equation}

Let $(\alpha_1,\dotsc,\alpha_n)\in \mathbb N^n$. We define
$a_\alpha\in \mathbb Z[x_1,\dotsc,x_n]$ by
\begin{equation}
  \label{eq:i51}
  a_\alpha(x_1,\dotsc,x_n) = \det (x_i^{\alpha_j})_{1\leq i,j \leq n}.
\end{equation}
Clearly $a_\alpha$ is skew-symmetric; that is, for $w\in S_n$,
$w(a_\alpha) = \sgn(w) a_\alpha$, where  $\sgn(w)$ is
the sign of permutation $w$. Let $\delta_n$ be the partition
\begin{equation}
    \delta_n=(n-1,n-2,\dotsc,1,0).
\end{equation}
For a partition $\lambda$ of length less than or equal to $n$, we append 0's as
necessary to $\lambda$ to create an $n$-tuple, and define
\begin{equation}
  \label{eq:i52}
  s_\lambda(x_1,\dotsc,x_n) =
  \frac{a_{\scriptscriptstyle{\delta_n\! +\!
        \lambda}}(x_1,\dotsc,x_n)}{a_{\scriptscriptstyle{\delta_n}}(x_1,\dotsc,x_n)}. 
\end{equation}
This is a polynomial. Since $s_\lambda(x_1,\dotsc,x_n)$ is a ratio of
skew-symmetric polynomials, it is a symmetric polynomial. These
symmetric polynomials are called  \emph{Schur symmetric polynomials.}
For $m>n$,  $\rho_{m,n}\left(s_\lambda(x_1,\dotsc,x_m)\right)
=s_\lambda(x_1,\dotsc,x_n)$,  hence they are represented by a
function $s_\lambda\in \Lambda$.

Using the definitions of $a_\lambda$ and $s_\lambda$, it is easy to
check that
\begin{equation}
  \label{eq:i64}
a_{\delta_n}(x_1,\dotsc,x_n)=  \prod_{1\leq i < j \leq n} (x_i -x_j),
\end{equation}
and
\begin{equation}
  \label{eq:i68}
s_{{\delta_n}}(x_1,\dotsc,x_n)=  \prod_{1\leq i < j \leq n}(x_i +x_j).
\end{equation}

Let $\lambda$ be a partition and $\lambda'$ be the conjugate
partition. Then, for $n\geq l(\lambda)$
\cite[p.41]{macdonald_symmetric_1995}
\begin{equation}
  \label{eq:i53}
  s_\lambda=\det\left(h_{\lambda_i-i+j}\right)_{1\leq i,j\leq n},
\end{equation}
and for $m \geq l(\lambda')$
\begin{equation}
  \label{eq:i54}
  s_\lambda=\det\left( e_{\lambda_i'-i+j}\right)_{1\leq i,j\leq m}.
\end{equation}
Identity \eqref{eq:i53} is called the Jacobi-Trudi identity, and
\eqref{eq:i54} is called the dual Jacobi-Trudi identity. Schur symmetric
functions satisfy  \cite[p.63]{macdonald_symmetric_1995}
\begin{equation}
  \label{eq:i11}
  \prod_{i,j\geq 1} (1-x_iy_j)^{-1} = \sum_\lambda s_\lambda(x) s_\lambda(y),
\end{equation}
and
\begin{equation}
  \label{eq:i56}
  \prod_{i,j\geq 1} (1+x_iy_j) = \sum_\lambda s_\lambda(x) s_{\lambda'}(y).
\end{equation}
The sum in \eqref{eq:i11} and \eqref{eq:i56} is over all partitions
$\lambda$. Identity \eqref{eq:i11} is called the Cauchy identity,
and \eqref{eq:i56} is called the dual Cauchy identity.

There is a \emph{fundamental involution} $\omega$, a ring
automorphism, defined on the ring of symmetric functions:
\begin{equation}
  \label{eq:i61}
  \omega(e_r)=h_r.
\end{equation}
Using \eqref{eq:i48}, we can prove that
\begin{equation}
  \label{eq:i62}
  \omega(h_r)=e_r.
\end{equation}
We also have
\begin{equation}
  \label{eq:i63}
  \omega(s_\lambda)=s_{\lambda'}, \ \textrm{and}\  \omega(p_n)=(-1)^{n-1}p_n.
\end{equation}

\section{Terms of the asymptotic expansion}
\label{sec:reform-probl}

We begin by rewriting the integrand on the right hand side of~\eqref{eq:Qkresidue}
as a ratio of a holomorphic function and a
monomial. The function $G(z_1,\dotsc,z_k)$ in \reff{eq:Gdefinition} has a
pole in each $z_j$ at $(0,\dotsc,0)$ coming from the product of the
zeta functions.
These poles are eliminated by a portion of the
Vandermonde determinants. Note that
\begin{equation}
  \label{eq:148}
    \Delta(z_1^2,\ldots,z_k^2)^2=
  \left(\prod_{1\leq i \leq j \leq k} (z_i+z_j) \right)
  \frac{\Delta(z_1,\dotsc,z_k)
    \Delta(z_1^2,\dotsc,z_k^2)}{2^k \prod_{j=1}^k z_j}. 
\end{equation}
Specifically each factor $(z_i+z_j)$ occurring here cancels a pole  coming
from $\zeta(1+z_i+z_j)$. We obtain \eqref{eq:148} by observing
\begin{align} \nonumber
    \Delta(z_1^2,\ldots,z_k^2) &=   \prod_{i<j}(z_j^2 - z_i^2)
    = \Delta(z_1,\ldots,z_k)\prod_{j>i} (z_i+z_j)\\ \label{eq:233}
    & = \Delta(z_1,\ldots,z_k)\frac{\prod_{j\geq i} (z_i+z_j)}{2^k \prod_{j=1}^k z_j}.
\end{align}

Substituting \eqref{eq:148} into \reff{eq:Qkresidue}, we have
\begin{multline}
    \label{eq:Qk}
    Q_{\pm}(k,x)=
      \frac{(-1)^{k(k-1)/2}}{k!} \frac {1}{(2\pi i)^k} \oint \cdots
    \oint A_k(z_1,\ldots,z_k) \\ \prod_{j=1}^k X(\tfrac 1 2 +z_j,a)^{-\frac
      1 2} \prod_{1\leq i \leq j \leq k} (z_i+z_j) \zeta(1+z_i+z_j) \\
    \frac{\Delta(z_1,\ldots,z_k)\Delta(z_1^2,\ldots,z_k^2)}{\prod_{j=1}^k
      z_j^{2k-1} \prod_{j=1}^k z_j}\, \exp{\bigg(\frac x 2 \sum_{j=1}^k z_j\bigg)} \,
    dz_1\ldots dz_k.
\end{multline}
Now the integrand is written as a ratio of a function which is holomorphic
in a neighbourhood of $(0,\dotsc,0)$ and a monomial.



Recall that $a_k=A_k(0,\ldots,0)$.
Let $z=(z_1,\ldots,z_k)$ and \index{$m_\lambda$}$m_\lambda(z)$ be the monomial symmetric
polynomial defined in \eqref{eq:i41}.  Let
\begin{equation}
  \label{eq:powerseries}
  \sum_{i=0}^\infty \sum_{|\lambda|=i}\index{$b^\pm_\lambda(k)$}b^\pm_\lambda(k) m_\lambda(z)
\end{equation}
be the power series expansion  of
\begin{equation}
  \label{eq:145}
  \frac{1}{a_k}   A_k(z_1,\ldots,z_k) \\ \prod_{j=1}^k X(\tfrac 1 2 +z_j,a)^{-\frac
    1 2} \prod_{1\leq i \leq j \leq k} (z_i+z_j) \zeta(1+z_i+z_j).
\end{equation}
Here, the coefficients $b^+_{\lambda}$ are associated to the $a=1$ case, and
$b^-_{\lambda}$ with $a=-1$. 

In \eqref{eq:powerseries}, the sum is over all partitions
$\lambda_1+\ldots+\lambda_k=i$, with
$\lambda_1\geq \lambda_2\geq \ldots \lambda_k \geq 0$.
We divide the expression by $a_k$ to ensure that the constant term in
the power series is $1$. We shall calculate the Taylor series of
\eqref{eq:145} by calculating the Taylor series of its logarithm. This
calculation is simpler if the constant term is  $1$, i.e.  $b^\pm_{\bf 0}(k)=1$ in
\eqref{eq:powerseries}.
So \reff{eq:Qk} becomes
\begin{multline}\label{eq:sumofintegrals}
    Q_{\pm}(k,x) = \frac{(-1)^{k(k-1)/2}}{k!} \frac {a_k}{(2\pi i)^k}
    \sum_{i=0}^\infty \sum_{|\lambda|=i}b^\pm_\lambda(k)\oint \cdots\oint m_\lambda
    (z_1,\dotsc, z_k) \\ \frac{\Delta(z_1,\dotsc,z_k)\Delta(z_1^2,\dotsc,z_k^2)}
    {\prod_{j=1}^k z_j^{2k}}  \exp{\left(\frac x 2 \sum_{j=1}^k z_j\right)}\,
    dz_1\dots dz_k.
\end{multline}
Only finitely many integrals in the sum \eqref{eq:sumofintegrals}
are nonzero.  Each of the integrals in \eqref{eq:sumofintegrals} picks up the
coefficient of $z_1^{2k-1}\dots z_k^{2k-1}$ in the Taylor expansion
of the numerator of the corresponding integrand. If $\deg m_\lambda(z_1,\dotsc,z_k)+\deg
\Delta(z_1,\dotsc,z_k) + \deg \Delta(z_1^2,\dotsc,z_k^2) > \deg
(z_1^{2k-1}\dots z_k^{2k-1})$, 
that is $|\lambda| > k(k+1)/2$, 
then in the Taylor expansion of the numerator of
\eqref{eq:sumofintegrals} the coefficient of $z_1^{2k-1}\dots
z_k^{2k-1}$ is $0$.

Given $k$, and a $\lambda$ in the sum
\eqref{eq:sumofintegrals}, the coefficient of the
monomial $z_1^{2k-1}\dots z_k^{2k-1}$ in the Taylor expansion of the
numerator of the integrand is a constant, depending on $\lambda$ and $k$,
times $x^{\frac{k(k+1)}{2}-|\lambda|}$.

\subsection{The leading term}
\label{sec:leading-term}

In this section, we shall calculate the leading coefficient of $Q_{\pm}(k,x)$,
i.e. the coefficient $c_\pm(0,k)$  of $x^{\frac{k(k+1)}{2}}$. The calculation will
also provide insight into how to calculate the lower order terms of $Q_{\pm}(k,x)$. 
The leading coefficient is the same for $Q_+(k,x)$ and for $Q_-(k,x)$, and is given
in the following proposition.

\begin{proposition}
  \label{prop:3}
  The leading coefficient $c_{\pm}(0,k)$ of $Q_{\pm}(k,x)$ in \reff{eq:Qexpansion} is
\begin{equation}
    \frac{  a_k}{ 2^{k} }
    \prod_{j=0}^{k-1}\frac{(2j)!}{(k+j)!}.
\end{equation}
\end{proposition}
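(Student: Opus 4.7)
My plan is to read off the coefficient $c_\pm(0,k)$ directly from \eqref{eq:sumofintegrals} and reduce everything to a single evaluable determinant of reciprocal factorials. Writing $M=k(k+1)/2$, I first observe that the residue $[\prod_j z_j^{2k-1}]$ applied to $m_\lambda(z)\Delta(z)\Delta(z^2)(\sum_j z_j)^n$ is nonzero only when the total degree matches, that is when $|\lambda|+n=M$. Combined with $e^{x\sum z_j/2}=\sum_{n\ge 0}\tfrac{x^n}{2^n n!}(\sum z_j)^n$, this forces the empty partition $\lambda=\emptyset$ to be the \emph{only} contributor to the $x^M$ term in \eqref{eq:sumofintegrals}, with $m_\emptyset=1$ and $b^\pm_\emptyset(k)=1$. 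Hence
\[
c_\pm(0,k)=\frac{(-1)^{k(k-1)/2}\,a_k}{k!\,2^{M}\,M!}\,\bigl[\textstyle\prod_j z_j^{2k-1}\bigr]\,\Delta(z)\Delta(z^2)\Bigl(\textstyle\sum_j z_j\Bigr)^{M}.
\]
In particular the leading coefficient is already independent of the sign $\pm$, consistent with $X(1/2,a)=1$ for $a\in\{0,1\}$.

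Next, expanding $\Delta(z)=\sum_\sigma\sgn(\sigma)\prod_j z_j^{\sigma(j)-1}$, $\Delta(z^2)=\sum_\tau\sgn(\tau)\prod_j z_j^{2\tau(j)-2}$ and using the multinomial expansion of $(\sum z_j)^M$, the bracketed coefficient becomes
\[
M!\sum_{\sigma,\tau\in S_k}\sgn(\sigma\tau)\prod_{j=1}^k\frac{1}{(2k+2-\sigma(j)-2\tau(j))!},
\]
with the convention $1/n!=0$ for $n<0$ (the constraint $\sum_j(2k+2-\sigma(j)-2\tau(j))=M$ follows from $\sum_j\sigma(j)=\sum_j\tau(j)=k(k+1)/2$ and is what forces only $n=M$ to survive). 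An Andreief-type identity,
\[
\sum_{\sigma,\tau\in S_k}\sgn(\sigma\tau)\prod_j f(\sigma(j),\tau(j))=k!\det\bigl(f(i,j)\bigr)_{1\le i,j\le k},
\]
obtained by fixing $\tau$, evaluating the $\sigma$-sum as $\sgn(\tau)\det(f(i,j))$ via a column permutation, and then summing $\sgn(\tau)^2=1$ over $\tau$, collapses the double sum to $M!\,k!\,D_k$ with $D_k=\det\bigl(1/(2k+2-i-2j)!\bigr)_{i,j=1}^k$.

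The final step is to evaluate $D_k$. A column reversal $j\mapsto k+1-j$, contributing a sign $(-1)^{k(k-1)/2}$, rewrites the entries as $1/(2n-i)!$. Pulling $1/(2n)!$ from column $n$ and $i!$ from row $i$ reduces the problem to $\det\bigl(\binom{2n}{i}\bigr)_{i,n=1}^k$. Since $\binom{2n}{i}$ is a polynomial in $n$ of degree $i$ with leading coefficient $2^i/i!$ and the degrees $1,2,\ldots,k$ are distinct, row reduction to leading monomials yields $\det\bigl(\binom{2n}{i}\bigr)=\prod_{i=1}^k(2^i/i!)\cdot\det(n^i)_{i,n}$, and a single Vandermonde computation gives $\det(n^i)_{i,n}=\prod_{i=1}^k i!$. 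Hence $\det\bigl(\binom{2n}{i}\bigr)=2^M$ and $D_k=(-1)^{k(k-1)/2}\,2^{M}\prod_{i=1}^k i!/(2i)!$. Substituting back yields $c_\pm(0,k)=a_k\prod_{i=1}^k i!/(2i)!$, and the identity \eqref{eq:comparison to KS} converts this to the stated form $(a_k/2^k)\prod_{j=0}^{k-1}(2j)!/(k+j)!$.

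The main obstacle is bookkeeping rather than any deep insight: one has to keep track of signs in both the column reversal and the Andreief collapse, manage the $1/n!=0$ convention when some $2k+2-\sigma(j)-2\tau(j)$ would be negative, and verify the Vandermonde reduction for $\det\bigl(\binom{2n}{i}\bigr)$. Every step is routine once the degree-matching argument has isolated $\lambda=\emptyset$, and the same bookkeeping framework should generalize to the lower-order coefficients treated later in the paper.
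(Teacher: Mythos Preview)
Your proof is correct, and it lands on exactly the same determinant as the paper does --- your $D_k=\det\bigl(1/(2k+2-i-2j)!\bigr)$ is precisely the determinant appearing in the paper's equation \eqref{eq:213} --- but you reach it and evaluate it by a different route.

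Where the paper introduces auxiliary variables $x_1,\ldots,x_k$, uses the differential-operator identity $P(\partial/\partial x_j)\,e^{\sum x_j z_j}=P(z_j)\,e^{\sum x_j z_j}$ to pull $q(z)=\Delta(z)\Delta(z^2)$ outside the integral, and then invokes Lemmas~\ref{lem:10} and~\ref{lem:9} to reduce to \eqref{eq:213}, you instead expand both Vandermondes as signed permutation sums and collapse the double sum with an Andreief-type identity. And where the paper converts \eqref{eq:213} to the binomial determinant $\bigl|\binom{2k-i}{2j-2}\bigr|$ and forward-references Corollary~\ref{cor:1} (i.e.\ $D_0(k)=2^{\binom{k}{2}}$, proved later with symmetric-function machinery), you column-reverse, factor out $i!/(2n)!$, and evaluate $\det\bigl(\binom{2n}{i}\bigr)=2^{M}$ directly by observing that $\binom{2n}{i}$ is a degree-$i$ polynomial in $n$ vanishing at $n=0$, so that the matrix factors as a lower-triangular times a Vandermonde. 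Your argument is therefore entirely self-contained for the leading term. The paper's route, on the other hand, is set up so that the same differential-operator and determinant framework (with the $D_\lambda(k)$ of \eqref{eq:i1}) handles the lower-order $m_\lambda$ contributions uniformly; your Andreief step would need an extra layer to accommodate the monomial $m_\lambda(z)$ when $\lambda\neq\emptyset$, though as you note this is plausible.
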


The leading term in ~\reff{eq:Qexpansion} corresponds to the $i=0$ term
of \eqref{eq:sumofintegrals}. In this case there is only one integral
within the inner summation sign, giving
\begin{equation}
    \label{eq:75}
    c_\pm(0,k)x^{k(k+1)/2} = \frac{(-1)^{\frac{k(k-1)}2}}{k!(2\pi i)^k}a_k \oint\cdots\oint
    \frac{\Delta(z_1,\ldots,z_k)\Delta(z_1^2,\ldots, z_k^2)}{
    \prod_{j=1}^{k}z_j^{2k}} \exp({\tfrac x 2 \sum_{j=1}^k z_j}) dz_1\ldots dz_k.
\end{equation}

Substituting  $u_j=x z_j/2$, simplifying, and then relabeling $u_j$ with $z_j$, we obtain
\begin{equation}
  \label{eq:183}
  c_\pm(0,k)x^{k(k+1)/2}= \frac{(-1)^{\frac{k(k-1)}{2}}}{k!(2\pi i)^k} a_k \left(\frac x 2
    \right)^{\frac {k(k+1)}{2}} \oint\cdots\oint
  \frac {\Delta(z_1,\ldots,z_k)\Delta(z_1^2,\ldots,z_k^2)} {\prod_{j=1}^k z_j^{2k}} \\ \exp({\sum_{j=1}^k z_j}) \,
  dz_1\cdots dz_k.
\end{equation}

The presence of the Vandermonde determinants prevents us from separating the integrals.
However, we apply the following trick to move the Vandermonde determinants outside the integral.
Introduce new variables $x_1,\dotsc,x_k$ and
consider the more general integral
\begin{align}
  I(x_1,\ldots,x_k):= 
    \frac{1}{(2\pi i)^k}
    \oint\cdots\oint
    \frac {\Delta(z_1,\ldots,z_k)\Delta(z_1^2,\ldots,z_k^2)}
    {\prod_{j=1}^k z_j^{2k}}  \label{eq:Ixdefinition}
    \exp({\sum_{j=1}^k x_j z_j}) \,
    dz_1\cdots dz_k.
\end{align}
Thus, the evaluation of $c_{\pm}(0,k)$ boils down to determining $I(1,\ldots,1)$.

Next, we introduce a partial
differential operator which will help us move the Vandermonde
determinants outside the integral.
Note that for a polynomial $P(x_1,\dotsc,x_k)$ in $k$ variables, we have
\begin{equation}
  \label{eq:185}
  P\left(\frac{\partial}{\partial x_1},\dotsc,
    \frac{\partial}{\partial x_k} \right) \exp\left( 
    \sum_{j=1}^kx_j z_j \right) = P(z_1,\dotsc,z_k) \exp\left(
    \sum_{j=1}^kx_j z_j \right).
\end{equation}
We set
\begin{equation}
    \label{eq:246}
    q(z_1,\ldots,z_k) := \Delta(z_1,\ldots,z_k)\Delta(z_1^2,\ldots,z_k^2).
\end{equation}
Then \eqref{eq:Ixdefinition} equals
\begin{equation}
    \label{eq:186}
    \frac{1}{(2\pi i)^k}
    \oint\cdots\oint
    q\left(\frac{\partial}{\partial
      x_1},\dotsc,\frac{\partial}{\partial x_k} \right)\frac{
    \exp({\sum_{j=1}^k x_j z_j}) } 
    {\prod_{j=1}^k z_j^{2k}} \,
    dz_1\cdots dz_k.
\end{equation}
Pulling the differential operator outside the integral (Leibniz's
rule) we conclude 
that \eqref{eq:186} equals
\begin{equation}
  \label{eq:187}
    q\left(\frac{\partial}{\partial x_1},\dotsc,\frac{\partial}{\partial x_k} \right)
    \frac{1}{(2\pi i)^k}
    \oint\cdots\oint
    \frac{
    \exp({\sum_{j=1}^k x_j z_j}) } 
    {\prod_{j=1}^k z_j^{2k}} \,
    dz_1\cdots dz_k.
\end{equation}
The integrand in~\eqref{eq:187} can be written as a product of integrals
in one variable, 
\begin{equation}
  \label{eq:188}
    q\left(\frac{\partial}{\partial x_1},\dotsc,\frac{\partial}{\partial x_k} \right)
    \frac{1}{(2\pi i)^k}
      \prod_{j=1}^k \oint \frac{\exp({x_jz_j})}{ z_j^{2k}} dz_j .
\end{equation}
Each integral in the above product can be
evaluated by expanding $\exp(x_jz_j)=\sum_{n=0}^\infty (x_jz_j)^n/n!$.
The coefficient of $z_j^{2k-1}$ is $x_j^{2k-1}/(2k-1)!$,
and thus \eqref{eq:188} equals
\begin{equation}
  \label{eq:189}
  q\left(\frac{\partial}{\partial x_1},\ldots,\frac{\partial}{\partial 
      x_k}\right) \prod_{i=1}^k  \frac{x_i^{2k-1}}{(2k-1)!}.
\end{equation}

We have turned our computation of $c_\pm(0,k)$ into the question of determining
the result of applying $q(\frac{\partial}{\partial
x_1},\dotsc,\frac{\partial}{\partial x_k})$ to $\prod_{i=1}^k
\frac{x_i^{2k-1}}{(2k-1)!}$, and finding the value of the resulting polynomial
at $(1,\dotsc,1)$. This calculation is done in Lemma \ref{lem:8}. The proof  of
Lemma \ref{lem:8} uses  Lemmas \ref{lem:10}, and \ref{lem:9}.

Lemma~\ref{lem:10}, a variant of Lemma 2.1 in \cite{CFKRS2}, gives a formula
for applying the differential operator $ \Delta\left(\frac{\partial^2}{\partial
x^2_1},\ldots,\frac{\partial^2}{\partial x^2_k}\right)$ to a product of
functions.

\begin{lemma}
\label{lem:10}
\begin{equation}
    \Delta\left(\frac{\partial^2}{\partial
    x_1^2},\ldots,\frac{\partial^2}{\partial x_k^2}\ \right) \prod_{i=1}^k f(x_i) =
    \left| f^{(2j-2)}(x_i) \right|_{k\times k}.
\end{equation}
\end{lemma}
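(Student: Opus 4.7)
The plan is to expand the Vandermonde-type differential operator explicitly using its determinantal form and then exploit the fact that partial derivatives in distinct variables act independently on the product $\prod_{i=1}^k f(x_i)$.

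First I would recall that $\Delta(w_1,\ldots,w_k) = \det(w_i^{j-1})_{1\leq i,j\leq k}$, so substituting $w_i = \partial^2/\partial x_i^2$ yields
\begin{equation*}
    \Delta\!\left(\tfrac{\partial^2}{\partial x_1^2},\ldots,\tfrac{\partial^2}{\partial x_k^2}\right)
    = \det\!\left(\tfrac{\partial^{2(j-1)}}{\partial x_i^{2(j-1)}}\right)_{1\leq i,j\leq k}.
\end{equation*}
Expanding this determinant via the Leibniz formula gives
\begin{equation*}
    \Delta\!\left(\tfrac{\partial^2}{\partial x_1^2},\ldots,\tfrac{\partial^2}{\partial x_k^2}\right)
    = \sum_{\sigma\in S_k} \sgn(\sigma)\,\prod_{i=1}^k \tfrac{\partial^{2\sigma(i)-2}}{\partial x_i^{2\sigma(i)-2}}.
\end{equation*}
Note that the operators inside the product commute because they act on pairwise distinct variables, so the order of the factors is immaterial.

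Next I would apply this operator to $\prod_{j=1}^k f(x_j)$. The key observation is that $\partial^{2\sigma(i)-2}/\partial x_i^{2\sigma(i)-2}$ acts nontrivially only on the factor $f(x_i)$, treating all other $f(x_j)$ with $j\neq i$ as constants. Hence
\begin{equation*}
    \prod_{i=1}^k \tfrac{\partial^{2\sigma(i)-2}}{\partial x_i^{2\sigma(i)-2}}\,
    \prod_{j=1}^k f(x_j)
    = \prod_{i=1}^k f^{(2\sigma(i)-2)}(x_i).
\end{equation*}
Summing over $\sigma\in S_k$ with the sign weights recovers precisely the determinant on the right-hand side:
\begin{equation*}
    \sum_{\sigma\in S_k}\sgn(\sigma)\prod_{i=1}^k f^{(2\sigma(i)-2)}(x_i)
    = \det\!\left( f^{(2j-2)}(x_i)\right)_{1\leq i,j\leq k}.
\end{equation*}

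There is no serious obstacle here: the proof is essentially a bookkeeping exercise once one combines the determinantal expansion of $\Delta$ with the independence of partial derivatives in distinct variables. The only point that warrants a brief comment is the commutativity of the operators $\partial^{2(j-1)}/\partial x_i^{2(j-1)}$ across different $i$, which justifies writing the product inside the Leibniz expansion without specifying an order.
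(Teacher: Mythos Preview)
Your proof is correct and follows essentially the same approach as the paper: both write the Vandermonde operator as $\det\bigl(\partial^{2(j-1)}/\partial x_i^{2(j-1)}\bigr)$ and use that the $i$-th row acts only on $f(x_i)$. The paper phrases this last step as ``moving $f(x_i)$ into the $i$-th row'' of the determinant, while you spell out the Leibniz expansion over $S_k$ explicitly, but the argument is the same.
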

\begin{proof}
Write
\begin{equation}
    \Delta\left(\frac{\partial^2}{\partial
    x^2_1},\ldots,\frac{\partial^2}{\partial x^2_k}\right) =
    \left| \frac{\partial^{2j-2}}{ x_i^{(2j-2)}} \right|_{k\times k}.
\end{equation}
Applying this to $\prod_{i=1}^k f(x_i)$, and noticing that $x_I$ only appears
in the $i$-th row of the determinant, we can move $f(x_i)$ into that row.
\end{proof}

Lemma \ref{lem:9} gives a formula for applying a product of
differentials to a determinant of functions.
\begin{lemma}  \label{lem:9}
Let $f_1(x),\dotsc,f_k(x)$ be smooth functions of one
variable. Then
\begin{equation}
    \label{eq:79}
      \frac{\partial^{n_1}}{\partial x_1^{n_1}} \dots
      \frac{\partial^{n_k}}{\partial x_k^{n_k}} 
  \begin{vmatrix}
    f_1(x_1)& \hdotsfor{2} & f_k(x_1)\\
    \vdots & \ddots & & \vdots \\
    \vdots &  &\ddots & \vdots \\
    f_1(x_k)&\hdotsfor{2}& f_k(x_k)
  \end{vmatrix} =  \begin{vmatrix}
    f_1^{(n_1)}(x_1)& \hdotsfor{2} & f_k^{(n_1)}(x_1)\\
    \vdots & \ddots & & \vdots \\
    \vdots &  &\ddots & \vdots \\
    f_1^{(n_k)}(x_k)&\hdotsfor{2}& f_k^{(n_k)}(x_k)
  \end{vmatrix}.
\end{equation}
\end{lemma}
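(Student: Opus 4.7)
The plan is to prove Lemma~\ref{lem:9} by expanding the determinant via the Leibniz formula and exploiting the key structural observation that in the matrix $(f_j(x_i))_{k\times k}$, the variable $x_i$ appears \emph{only} in the $i$-th row. Because of this separation, each partial derivative $\partial^{n_i}/\partial x_i^{n_i}$ interacts with exactly one factor in each term of the Leibniz expansion.

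Concretely, I would write
\begin{equation*}
    \det(f_j(x_i))_{k\times k} = \sum_{\sigma \in S_k} \sgn(\sigma) \prod_{i=1}^k f_{\sigma(i)}(x_i),
\end{equation*}
and then apply the composite differential operator $\prod_{i=1}^k \partial^{n_i}/\partial x_i^{n_i}$ termwise. Since the operators commute and since the product $\prod_{i=1}^k f_{\sigma(i)}(x_i)$ has $x_i$ occurring only in the single factor $f_{\sigma(i)}(x_i)$, the operator $\partial^{n_i}/\partial x_i^{n_i}$ treats every other factor as a constant and simply replaces $f_{\sigma(i)}(x_i)$ by $f_{\sigma(i)}^{(n_i)}(x_i)$. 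This gives
\begin{equation*}
    \frac{\partial^{n_1}}{\partial x_1^{n_1}} \dots \frac{\partial^{n_k}}{\partial x_k^{n_k}}
    \prod_{i=1}^k f_{\sigma(i)}(x_i) = \prod_{i=1}^k f_{\sigma(i)}^{(n_i)}(x_i).
\end{equation*}

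Summing back over $\sigma$ with the sign weights yields $\sum_{\sigma} \sgn(\sigma) \prod_i f_{\sigma(i)}^{(n_i)}(x_i)$, which is exactly the Leibniz expansion of the determinant on the right-hand side of~\eqref{eq:79}. Alternatively, one can phrase this row-by-row: multilinearity of the determinant in its rows together with the fact that the $i$-th row depends solely on $x_i$ allows $\partial^{n_i}/\partial x_i^{n_i}$ to be pushed inside the determinant and applied entrywise to that row.

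There is no real obstacle here; the lemma is essentially a bookkeeping statement. The only point that requires any care is ensuring that the operators are applied to the correct row (i.e.\ that the index $i$ in $\partial^{n_i}/\partial x_i^{n_i}$ matches the row in which $x_i$ lives), which the Leibniz expansion makes transparent. The proof will be short, about three or four lines once the Leibniz expansion is written down.
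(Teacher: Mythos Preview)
Your proof is correct and follows essentially the same approach as the paper: the paper's proof is a one-line remark that applying $\partial/\partial x_1$ to the determinant makes the claim evident, which is precisely your row-by-row multilinearity observation (that $x_i$ appears only in the $i$-th row). Your Leibniz-expansion version simply spells this out in full detail.
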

\begin{proof}
It is easy to see if we first look at a simple case, say
$\tfrac{\partial}{\partial x_1}$ applied to the determinant on the left hand
side of \eqref{eq:79}.
\end{proof}

\begin{lemma}
  \label{lem:8}Let $q(z_1,\ldots,z_k)= \Delta(z_1,\ldots,z_k)
\Delta(z_1^2,\ldots,z_k^2)$, then
\begin{equation}
    \label{eq:155}
    q\left(\frac{\partial}{\partial x_1},\dotsc,\frac{\partial}{\partial
      x_k}\right) \prod_{i=1}^k  \frac{x_j^{2k-1}}{(2k-1)!} 
\end{equation}
evaluated at $(x_1,\dotsc, x_k)=(1, \dotsc ,1)$ is
\begin{equation}
    \label{eq:190}
    (-1)^{\frac{k(k-1)}{2}} \times  k! \left(\prod_{j=0}^{k-1}
    \frac{(2j)!}{(k+j)!}\right) 2^{\frac{k(k-1)}{2}} .
\end{equation}
\end{lemma}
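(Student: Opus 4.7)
The plan is to factor
\begin{equation*}
    q(\partial/\partial x_1,\ldots,\partial/\partial x_k) = \Delta(\partial/\partial x_1,\ldots,\partial/\partial x_k)\cdot \Delta(\partial^2/\partial x_1^2,\ldots,\partial^2/\partial x_k^2)
\end{equation*}
and apply the two Vandermonde operators in succession to $\prod_{i=1}^{k}f(x_i)$, where $f(x)=x^{2k-1}/(2k-1)!$. Applying the second factor first, Lemma~\ref{lem:10} collapses the product into the determinant
\begin{equation*}
    \det\!\left(\frac{x_i^{2k-2j+1}}{(2k-2j+1)!}\right)_{\!1\leq i,j\leq k},
\end{equation*}
since $f^{(2j-2)}(x)=x^{2k-2j+1}/(2k-2j+1)!$.

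Next I expand
\begin{equation*}
    \Delta\!\left(\frac{\partial}{\partial x_1},\ldots,\frac{\partial}{\partial x_k}\right) = \sum_{\sigma\in S_k}\sgn(\sigma)\prod_{i=1}^{k}\frac{\partial^{\sigma(i)-1}}{\partial x_i^{\sigma(i)-1}},
\end{equation*}
pass each summand through the determinant via Lemma~\ref{lem:9} (which differentiates row $i$ an additional $\sigma(i)-1$ times), and evaluate at $(x_1,\ldots,x_k)=(1,\ldots,1)$ to arrive at
\begin{equation*}
    \sum_{\sigma\in S_k}\sgn(\sigma)\det\!\left(\frac{1}{(2k+2-2j-\sigma(i))!}\right)_{\!1\leq i,j\leq k}.
\end{equation*}
The key observation is that the matrix in each $\sigma$-summand is obtained from the fixed matrix $N_{n,j}=1/(2k+2-2j-n)!$ by the row permutation $i\mapsto\sigma(i)$, so its determinant equals $\sgn(\sigma)\det N$. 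The $\sgn(\sigma)$ factors therefore square and the $\sigma$-sum collapses to $k!\det N$. Reversing the $k$ columns of $N$ (the longest element of $S_k$, which has sign $(-1)^{\binom{k}{2}}$) rewrites this as
\begin{equation*}
    k!\,(-1)^{k(k-1)/2}\det\!\left(\frac{1}{(2m-n)!}\right)_{\!1\leq n,m\leq k}.
\end{equation*}

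The main obstacle is then evaluating $D:=\det(1/(2m-n)!)_{n,m\in[k]}$, which I handle by a row-reduction trick. Clearing column $m$ by a factor of $(2m)!$ gives $D=\prod_{m=1}^{k}(1/(2m)!)\cdot\det((2m)_n)_{n,m}$, where $(2m)_n:=(2m)(2m-1)\cdots(2m-n+1)$ is the falling factorial. Row $n$ of this new matrix, viewed as a polynomial in $m$, has degree $n$, leading coefficient $2^n$, and vanishes at $m=0$; since rows $1,\ldots,n-1$ span exactly the space of polynomials of degree $<n$ in $m$ vanishing at $0$, subtracting the appropriate linear combination of those rows replaces row $n$ by $2^n m^n$ without changing the determinant. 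Thus $\det((2m)_n)_{n,m}=2^{k(k+1)/2}\det(m^n)_{n,m}$; pulling one factor of $m$ out of each column then turns the last determinant into $k!$ times the transposed Vandermonde $\det(m^{n-1})_{n,m}=\prod_{j=0}^{k-1}j!$. Regrouping the factorials via the identity~\eqref{eq:comparison to KS} identifies $D$ with $2^{k(k-1)/2}\prod_{j=0}^{k-1}(2j)!/(k+j)!$, so the whole expression equals~\eqref{eq:190} as claimed.
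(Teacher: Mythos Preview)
Your proof is correct and follows the same overall route as the paper: apply Lemma~\ref{lem:10} to the $\Delta(\partial^2)$ factor, expand the remaining $\Delta(\partial)$ as a signed sum over $S_k$, push each summand through the determinant via Lemma~\ref{lem:9}, evaluate at $(1,\dots,1)$, and observe that all $k!$ summands coincide up to sign so that the sum collapses to $k!$ times a single determinant.

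The only genuine difference is in how that final determinant is evaluated. The paper converts it, by row- and column-scaling, into the binomial-coefficient determinant $\det\bigl(\binom{2k-i}{2j-2}\bigr)_{k\times k}$ and then \emph{forward-references} Corollary~\ref{cor:1} (the $\lambda=\emptyset$ case $D_0(k)=2^{\binom{k}{2}}$, proved in Section~\ref{sec:determinant} via Schur functions and the dual Cauchy identity). You instead scale columns to produce the falling-factorial matrix $\bigl((2m)_n\bigr)_{n,m}$, use the triangular change of basis from $\{(2m)_n\}$ to $\{2^n m^n\}$ in the space of polynomials vanishing at $0$, and reduce to a genuine Vandermonde. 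Your argument is entirely self-contained and elementary, which is a real advantage for this lemma in isolation; the paper's route, on the other hand, identifies the determinant as the base case of the family $D_\lambda(k)$ that is needed anyway for the lower-order terms in Section~\ref{sec:further-lower-order}, so the forward reference costs nothing in the global economy of the paper.
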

\begin{proof} To prove the Lemma, we relate the value of
  \eqref{eq:155} evaluated at $(x_1,\dotsc, x_k)=(1,\dotsc,1)$ to a
  determinant of a   matrix  whose entries are binomial
  coefficients. We then use an identity for binomial coefficients to
  rewrite the determinant as a product of two determinants, and
  evaluate each of them separately.

Applying Lemma \ref{lem:10}, we can deduce that
\begin{align}
  \Delta\left(\frac{\partial}{\partial
      x_1},\ldots,\frac{\partial}{\partial x_k}\right)
  \Delta\left(\frac{\partial^2}{\partial^2
      x_1^2},\ldots,\frac{\partial^2}{\partial
      x_k^2}\right)\prod_{j=1}^k f(x_j)
\end{align}
equals
\begin{equation}
  \label{eq:259}
  \Delta\left(\frac{\partial}{\partial x_1},\ldots,\frac{\partial}{\partial
      x_k}\right) \left| f^{(2(j-1))}(x_i) \right|_{k\times k}.
\end{equation}
Expanding the Vandermonde determinant of partial differential
operators, we obtain 
\begin{equation}
  \label{eq:74}
   \sum_{\mu \in S_k} \sgn( \mu)  \frac{\partial^{\mu_1-1}}{\partial
     x_1^{\mu_1-1}}\dots\frac{\partial^{\mu_k-1}}{\partial x_k^{\mu_k-1}}
  \begin{vmatrix}
    f(x_1) & f^{(2)}(x_1) &\cdots & f^{(2(k-1))}(x_1) \\
    f(x_2) & f^{(2)}(x_2) &\cdots & f^{(2(k-1))}(x_2) \\
    \vdots & \vdots & \ddots & \vdots \\
    f(x_k) & f^{(2)}(x_k) &\cdots & f^{(2(k-1))}(x_k)
  \end{vmatrix},
\end{equation}
where $\mu_1,\dotsc,\mu_k$ is the image of the permutation $\mu$  of $1,\dotsc,k$. Applying
Lemma \ref{lem:9},  we can see  that \eqref{eq:74} equals 
\begin{equation}
     \label{eq:73}
   \sum_{\mu \in S_k} \sgn( \mu) 
  \begin{vmatrix}
    f^{(\mu_1 -1)}(x_1) & f^{(\mu_1 +1)}(x_1) &\cdots & f^{(\mu_1 -1 +2(k-1))}(x_1) \\
    f^{(\mu_2 -1)}(x_2) & f^{(\mu_2 +1)}(x_2) &\cdots & f^{(\mu_2 -1 +2(k-1))}(x_2) \\
    \vdots & \vdots & \ddots & \vdots \\
    f^{(\mu_k - 1)}(x_k) & f^{(\mu_k + 1)}(x_k) &\cdots & f^{(\mu_k - 1 +2(k-1))}(x_k)
  \end{vmatrix}.
\end{equation}
Let \index{$f(x)$}$f(x)=\frac{x^{2k-1}}{(2k-1)!}$.
Expression \eqref{eq:73} evaluated at $(x_1,\dotsc,x_k)=(1,\dotsc,1)$ is
\begin{equation}
  \label{eq:239}
  \sum_{\mu \in S_n} \sgn( \mu) 
  \begin{vmatrix}
    \frac{1}{(2k-\mu_1)!} &\frac{1}{(2k-\mu_1-2)!}  &\cdots & \frac{1}{(-\mu_1+2)!}\\
    \frac{1}{(2k-\mu_2)!} &\frac{1}{(2k-\mu_2-2)!}  &\cdots & \frac{1}{(-\mu_2+2)!}\\
    \vdots & \vdots & \ddots & \vdots \\ 
    \frac{1}{(2k-\mu_k)!} &\frac{1}{(2k-\mu_k-2)!}  &\cdots & \frac{1}{(-\mu_k+2)!}
  \end{vmatrix}. 
\end{equation} 
Rearranging the rows to cancel the effect of $\mu$ (this introduces another $\sgn(\mu)$ in
front of the determinant) and evaluating at
$(x_1,\dotsc, x_k)=(1,\dotsc,1)$, we get \eqref{eq:239} equals
\begin{equation}
  \label{eq:213}
    k!
  \begin{vmatrix}
    \frac {1} {(2k-1)!} & \frac {1}{(2k-3)!} &\cdots  & \frac{1}{1!} \\
    \frac {1} {(2k-2)!} & \frac {1}{(2k-4)!} &\cdots  & \frac{1}{0!} \\
    \vdots & \vdots & \ddots & \vdots \\
    \frac{1}{k!} & \frac {1}{(k-1)!} &\cdots & 0
  \end{vmatrix}.
\end{equation}
We can convert the determinant \eqref{eq:213} into a determinant of
matrices whose entries are binomial coefficients.  Multiplying
the $j^\textrm{th}$ column  by $\frac 1{(2(j-1))!} $ and the
$i^\textrm{th}$ row  by $(2k-i)!$, we see that \eqref{eq:213} equals
\begin{equation}
  \label{eq:159}
    k! \frac{0! 2! \cdots (2k-2)!}{(2k-1)! (2k-2)! \cdots k!} 
  \begin{vmatrix}
    \binom{2k-1}{0} & \binom{2k-1}{2} & \cdots & \binom{2k-1}{2k-2}\\
    \binom{2k-2}{0} & \binom{2k-2}{2} & \cdots & \binom{2k-2}{2k-2}\\
    \vdots & \vdots & \ddots & \vdots \\
    \binom{k}{0} & \binom{k}{2} &\cdots &\binom{k}{2k-2}
  \end{vmatrix}.
\end{equation}

The determinant in  \eqref{eq:159} is
\begin{equation}
  \label{eq:143}
    \left|\binom{2k-i}{2j-2} \right|_{k\times k}.
\end{equation}
In Section~\ref{sec:determinant} we study this determinant. From \eqref{eq:i1}
and Corollary~\ref{cor:1}, the determinant of this matrix equals $(-2)^{\binom
k 2}$. The extra $(-1)^{\binom k 2}$ here comes about from the fact that the
$D_0(k)$ in~\eqref{eq:i1} has its columns reversed from the above
determinant.

\end{proof}

Applying Lemma \ref{lem:8} to (\ref{eq:189}), we find that the
leading term is:
\begin{align}
  \label{eq:161}
\notag &   \frac{a_k}{k!} \left(\frac x 2 \right)^{\frac{k(k+1)}{2}} \left(k!
    \frac{0! 2!\cdots (2k-2)!}{(2k-1)!\cdots k!} \right) 2^{\frac{k(k-1)}{2}}\\
    =&  \frac{a_k}{2^k}
    \prod_{j=0}^{k-1} \frac{(2j)!}{(k+j)!}
    x^{k(k+1)/2}
\end{align}
Hence the coefficient of the leading term is
\begin{equation}
\frac{  a_k}{ 2^{k} }
  \prod_{j=0}^{k-1}\frac{(2j)!}{(k+j)!}.
\end{equation}
This proves Proposition \ref{prop:3}.

\subsection{Further lower order terms}
\label{sec:further-lower-order}

In this section we consider a general term occurring in the
sum of integrals \eqref{eq:sumofintegrals}. Let $\lambda$ be
a partition. We shall calculate
\begin{multline}
    \label{eq:194}
    \frac{(-1)^{k(k-1)/2} 2^k}{k!} \frac {a_k}{(2\pi i)^k}
    b^\pm_\lambda(k)\oint \cdots\oint
    m_\lambda (z_1,\dotsc, z_k) \\
    \frac{\Delta(z_1,\dotsc,z_k)\Delta(z_1^2,\dotsc,z_k^2)}
    {2^k\prod_{j=1}^k z_j^{2k}}\exp\left(\frac x 2 \sum_{j=1}^kz_j \right) dz_1\dots dz_k.
\end{multline}

Modifying the approach of the previous section to incorporate
the extra monomial $m_\lambda(z_1,\dotsc,z_k)$, we define
\begin{equation}
  q_\lambda(z_1,\dotsc , z_k) =m_\lambda (z_1,\dotsc,z_k)
  \Delta(z_1,\dotsc,z_k) \Delta(z_1^2,\dotsc,z_k^2) .
\end{equation}
Following the same steps as in the evaluation of the leading term,
expression \eqref{eq:194} becomes 
\begin{equation}\label{generalpartition}
  \frac{(-1)^{\frac{k(k-1)}2} a_k b^\pm_\lambda(k)}{k!}\left(\frac x 
    2\right)^{\frac{k(k+1)}2-|\lambda|} \left( q_\lambda\left(\frac{\partial}{\partial
        x_1},\dotsc,\frac{\partial}{\partial x_k} \right) \prod_{j=1}^k
    \frac{x_j^{2k-1}} {(2k-1)!}\right)_{\textrm{evaluated at } {x_j=1.}}
\end{equation}
This section is devoted to calculating  \eqref{generalpartition}.

Let  $f(x)={x^{2k-1}}/{(2k-1)!} $. Let
\index{$\lvert\lambda\rvert$}$|\lambda|=\sum_i \lambda_i$,
and length $\index{$l(\lambda)$}l(\lambda)$. Thus, $l(\lambda)$ is
the number of non zero elements of the  partition $\lambda$, i.e.
$\lambda_j=0$ for $j>l(\lambda)$.
Let $m_j(\lambda)$ be the number of $j$'s in the partition $\lambda$, so that
$|\lambda| = m_1(\lambda)+2m_2(\lambda)+3m_3(\lambda)+\ldots$.  
There are
$\binom{k}{l(\lambda)}\binom{l(\lambda)}{m_1(\lambda),m_2(\lambda),\dotsc}$
monomials in $m_\lambda(x_1,\dotsc,x_k)$ (\!\!\cite{Sta99}, 7.8). Here
\index{$\binom{l(\lambda)}{m_1(\lambda),m_2(\lambda),\dotsc}$}
$\binom{l(\lambda)}{m_1(\lambda),m_2(\lambda),\dotsc}$
is the multinomial coefficient. Since we are working with symmetric functions,
it is enough to compute \eqref{eq:194}, i.e.
\eqref{generalpartition}, for one
monomial  of $m_\lambda\left(\frac{\partial}{\partial 
  x_1},\dots,\frac{\partial}{\partial x_k}\right)$.  Therefore,
\begin{equation}\label{eq:72}
  q_\lambda\left(\frac{\partial}{\partial x_1},\dotsc,\frac{\partial}{\partial
      x_k} \right) \prod_{j=1}^k f(x_j)
  \Bigg\vert_{\textrm{ evaluated at } x_j=1}
  \end{equation}
equals   
\begin{equation}
  \label{eq:254}
\binom{k}{l(\lambda)}
    {l(\lambda) \choose m_1(\lambda),m_2(\lambda),\dots} \frac{\partial^{|\lambda|}}{\partial
    x_1^{\lambda_1}\dots\partial x_{l(\lambda)}^{\lambda_{l(\lambda)}}}\Delta\left(
    \frac{\partial }{\partial x_1} \dots 
    \frac{\partial }{\partial x_k} \right)\Delta \left(
    \frac{\partial^2 }{\partial x_1^2} \dots 
    \frac{\partial^2 }{\partial x_k^2} \right)\prod_{j=1}^k f(x_j)
\end{equation}
evaluated at $(x_1,\dotsc,x_k)=(1,\dotsc,1)$. We already have the
expression for the effect of Vandermonde determinant operators in
\eqref{eq:73}. Therefore by Lemma~\ref{lem:9},  the expression
\eqref{eq:254} equals 
\begin{equation}
    \label{eq:156}\binom{k}{l(\lambda)}
   {l(\lambda) \choose m_1(\lambda),m_2(\lambda),\dots} \frac{\partial^l}{\partial
   x_1^{\lambda_1}\dots\partial x_{l(\lambda)}^{\lambda_{l(\lambda)}}} \sum_{\mu \in S_k}
  \sgn(\mu) \det \left(f^{(\mu_i -1) +2(j-1))}(x_i) \right).
\end{equation}
The expression \eqref{eq:156} is equal to
\begin{equation}
  \label{eq:201}
\binom{k}{l(\lambda)}
   {l(\lambda) \choose m_1(\lambda),m_2(\lambda),\dots}
  \sum_{\mu \in S_k} \sgn(\mu) \det \left(f^{(\mu_i -1 +2(j-1)+
      \lambda_i)}(1) \right).
\end{equation}
 In each  summand  of \eqref{eq:201}, rearrange the rows so as to
 reverse the effect of $\mu$. We get
\begin{equation}
  \label{eq:157}  \binom{k}{l(\lambda)} {l(\lambda) \choose m_1(\lambda),m_2(\lambda),\dots}
  \sum_{\nu \in S_k}  \det \left(f^{(i -1 +2(j-1)+
      \lambda_{\nu_i})}(1) \right).
\end{equation}
Here $\nu$ is $\mu^{-1}$. The expression \eqref{eq:157} is
\begin{equation}
  \label{eq:160}\binom{k}{l(\lambda)} {l(\lambda) \choose m_1(\lambda),m_2(\lambda),\dots}
\sum_{\nu \in S_k} \det \left(
    \frac{1}{\left(2k-1 -(i-1)-2(j-1)- \lambda_{\nu_i
        }\right)!} \right)_{ij}.
\end{equation}
Each determinant inside the sum is of the form 
\begin{equation}
  \label{eq:165}
  \det \left(     \frac{1}{\left(2k-1 -(i-1)-2(j-1)- d_i \right)!}
  \right)_{ij},
\end{equation}
and $\sum d_i =|\lambda|$. In Proposition~\ref{prop:5}, we determine a
necessary condition for the determinant \eqref{eq:165} to be non
zero. This condition will imply that a large portion of terms in
\eqref{eq:160} are zero.

\begin{prop}
  \label{prop:5}
Consider the determinant
\begin{equation}
  \label{eq:231}
  \det \left(     \frac{1}{\left(2k-1 -(i-1)-2(j-1)- d_i \right)!}
  \right)_{ij}.
\end{equation}
Assume that $\sum_{i=1}^k d_i =|\lambda|$, with $d_i \in \Z_{\geq 0}$.
The determinant \eqref{eq:231} is zero if any  of 
$d_1,\dotsc,d_{k-|\lambda|}$ is non zero.
\end{prop}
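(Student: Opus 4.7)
The plan is to reparametrize and reduce to a purely combinatorial statement. Set $v_i := i + d_i$, so $v_i \ge i$, and observe that
\begin{equation*}
  2k - 1 - (i-1) - 2(j-1) - d_i \;=\; 2k + 2 - v_i - 2j,
\end{equation*}
so the $(i,j)$-entry of the matrix depends on $i$ and $d_i$ only through $v_i$. Consequently, if $v_{i_1} = v_{i_2}$ for two distinct indices $i_1 \ne i_2$, rows $i_1$ and $i_2$ are identical and the determinant vanishes. The proposition therefore reduces to the combinatorial claim: if $d_{i_0} > 0$ for some $i_0 \le k - |\lambda|$, then two of the $v_i$ must coincide.

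I would prove the contrapositive: assuming the $v_i$ are all distinct, show that $d_i = 0$ for every $i \le k - |\lambda|$. Sort the $v_i$'s into an increasing sequence $w_1 < w_2 < \cdots < w_k$ of positive integers, so $w_j \ge j$. The sequence $(w_j - j)_{j=1}^{k}$ is then weakly increasing in $j$ (since $w_{j+1} \ge w_j + 1$), nonnegative, and sums to $\sum_i d_i = |\lambda|$.

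The key pigeonhole step is that if $w_{j_0} > j_0$ for some $j_0 \le k - |\lambda|$, monotonicity forces $w_j - j \ge 1$ for every $j \ge j_0$, contributing at least $k - j_0 + 1 \ge |\lambda| + 1$ to the sum, which contradicts $\sum (w_j - j) = |\lambda|$. Hence $w_j = j$ for $j \le k - |\lambda|$, so the values $\{1, 2, \ldots, k - |\lambda|\}$ all appear among the $v_i$. Since $v_i \ge i$, each value $v_i = j \le k - |\lambda|$ must come from some index $i \le j \le k - |\lambda|$; a cardinality count then gives $\{v_i : i \le k - |\lambda|\} = \{1, \ldots, k - |\lambda|\}$. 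The only permutation $\pi$ of $\{1, \ldots, n\}$ satisfying $\pi(i) \ge i$ is the identity, so $v_i = i$, i.e.\ $d_i = 0$, for all $i \le k - |\lambda|$.

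The main obstacle is really just spotting the change of variables $v_i = i + d_i$, which collapses the structured factorial expression into something depending on a single parameter per row; once that reduction is in place, the combinatorics of the sorted sequence and its diagonal offsets is entirely elementary.
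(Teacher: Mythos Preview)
Your proof is correct. Both your argument and the paper's hinge on the same initial observation: the $i$-th row depends on $i$ and $d_i$ only through the combination $v_i = i + d_i$, so the determinant vanishes whenever two of the $v_i$ coincide. Where the arguments diverge is in how they force such a collision. The paper runs a \emph{cascade}: starting from an index $u \le k - |\lambda|$ with $d_u \neq 0$, it jumps to $u + d_u$, then to $u + d_u + d_{u+d_u}$, and so on; since the total jump is bounded by $|\lambda|$ the chain stays inside $\{1,\dots,k\}$, and strict monotonicity forces it to land on an index with $d = 0$, yielding two equal rows. You instead take the contrapositive, sort the $v_i$, and use monotonicity of the offsets $w_j - j$ together with the sum constraint $\sum (w_j - j) = |\lambda|$ to pin down $w_j = j$ for $j \le k - |\lambda|$, then finish with the standard ``$\pi(i) \ge i$ implies $\pi = \mathrm{id}$'' fact. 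Your packaging is a bit more systematic and makes the pigeonhole structure transparent; the paper's cascade is more direct but slightly informal as written. Either way, the substance is the same elementary combinatorics once the $v_i = i + d_i$ reduction is in hand.
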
 
\begin{proof} Let $u$ be a  number between $1$ and $k$
such that $d_u$ is non zero.    The $u^\textrm{th}$ row in the matrix is
\begin{equation}
  \label{eq:166}
  \left(
    \frac{1}{\left(2k-1 -(u-1)-2(j-1)- d_u \right)!} \right)_{1\leq j
    \leq k}.
\end{equation}
Now look at the row which is $d_u$ rows below the row $u$ in the matrix
\eqref{eq:231}. Let this be row $v$ where  $v=u+d_u$.  Row  $v$,
\begin{equation}
  \label{eq:167}
    \left(
    \frac{1}{\left(2k-1 -(v-1)-2(j-1)- d_v \right)!} \right)_{1\leq j
    \leq k},
\end{equation}
is identical to row $u$ if $d_v$ is  zero.  We have a 
necessary condition for the matrix to have a non zero determinant;
for every $u$ such that $d_u\neq 0$, either $d_{u+d_u}$ is also non zero or $
u+d_u >k$. We look at this cascading process, and see that if we start at a
row above the row $k-|\lambda|$, that is if $d_u \neq 0$ for some $u\leq
k-|\lambda|$, then we cannot go  down beyond row $k$ since all $d_i$ add to
$|\lambda|$. Hence we will have two identical rows. We can then conclude that
we obtain non zero determinants in \eqref{eq:231} only when $d_u =0$ for
$1\leq u \leq  k-|\lambda|$.
\end{proof}

The above proposition tells us that, in~\eqref{eq:76} all the action takes
place in the last $|\lambda|$ rows or lower. Thus, let 
${\bf u}=(u_k,\dotsc,u_1) $ be a permutation of $(\lambda_1,\dotsc,\lambda_k)$.
Notice that we have reversed the order of the subscripts on the $u$'s, starting
at $k$ and ending at $1$. Applying the above proposition, we shall assume
$u_i=0$ for $i>|\lambda|$. Note, however, that some of the $u_i$'s, with $i\leq
|\lambda|$ can also equal 0. For a given permutation {\bf u}, let $i({\bf u})$ be the
smallest positive integer such that $u_i=0$ for all $i>i({\bf u})$. Thus,
$i({\bf u}) \leq |\lambda|$.

Next, any two permutations that have identical non-zero $u_i$'s, i.e.
that move around the $0$'s, produce the same determinant. For any given way of selecting
where the non-zero $\lambda_i$'s go, there are $(k-l(\lambda))!$ ways to move around the
remaining zero-valued $\lambda_i$'s. Furthermore, permuting identical
non-zero $\lambda_i$'s also produces the same determinant. For a given permutation,
there are $m_1(\lambda)! m_2(\lambda)! \ldots$ ways to move around the identical non-zero
$\lambda_i$'s. Using the fact that
\begin{equation}
    \label{eq:simplify multinomial}
    \binom{k}{l(\lambda)}{l(\lambda)\choose m_1(\lambda),m_2(\lambda),\dotsc}
    (k-l(\lambda))! m_1(\lambda)! m_2(\lambda)! \ldots = k!,
\end{equation}
and taking into account the above two paragraphs, expression
\eqref{eq:160} can thus be written as
\begin{equation}\label{eq:76}
    k!
    \sum_{\bf u}{^{'}} \left|
    \begin{array}{cccccc}
      \frac{1}{(2k-1)!} & \frac{1}{(2k-3)!} &\hdotsfor{3} & \frac{1}{1!} \\
      \frac{1}{(2k-2)!} & \frac{1}{(2k-4)!} & \hdotsfor{3} & \frac{1}{0!}\\
      \vdots            & \vdots            &   & & & \vdots \\
      \frac{1}{(k+i({\bf u}))!} & \frac{1}{(k-i({\bf u})-2)!}       && &        &\vdots\\
      &      && &        & \\
      \hdashline\\
      \frac{1}{(k+i({\bf u})-1-u_{i({\bf u})})!} & \frac{1}{(k+i({\bf u})-3-u_{i({\bf u})})!}  &     & &        &\vdots \\
      \vdots            & \vdots            &   &  && \vdots \\
      \frac{1}{(k-u_1)!} & \frac{1}{(k-u_1-2)!} &\hdotsfor{3}&0
    \end{array}
    \right|.
\end{equation}
There are $k-i({\bf u})$ rows above the
horizontal dashed line and $i({\bf u})$ rows below the dotted line. The sum is
over {\em distinct} permutations $(u_k,\dotsc,u_1)$ of
$(\lambda_1,\dotsc,\lambda_k)$, satisfying $u_i=0$ for $i>|\lambda|$. Note
that, in order for a given permutation $\bf u$ to appear in the sum, we require
that $k \geq i({\bf u})$.

We may also reduce the number of terms in the sum by excluding matrices where
two or more rows of the matrix are identical. The $'$ on the sum indicates that
such terms have been excluded from the sum.

Now consider one specific term in the sum \eqref{eq:76}. As in the calculation
of the leading coefficient, multiply its $i^{\mathrm{th}}$ row by $(2k-i)!$ and
its $j^{\mathrm{th}}$ column by $1/(2(j-1))!$. This enables us to write
the determinant in  a term of \eqref{eq:76} as a product of a known quantity
and a determinant of binomial coefficients,
\begin{multline}\label{eq:78}
  \frac{\prod_{j=1}^k (2(j-1))! }{\prod_{i=1}^k (2k-i)!}
  \times
  \begin{vmatrix}
    {2k-1 \choose 0} &  {2k-1\choose 2} & \hdotsfor{2} & {2k-1\choose 2k-2} \\
    {2k-2\choose 0} &   {2k-2\choose 2} & \hdotsfor{2} &  {2k-2\choose 2k-2} \\
    \vdots &    \vdots   &  \ddots      &    & \vdots \\
    {k+i({\bf u}) \choose 0} & {k+i({\bf u}) \choose 2}  &     & \ddots &  {k+i({\bf u})\choose 2k-2} \\
    {k+i({\bf u})-1-u_{i({\bf u})} \choose 0} & {k+i({\bf u})-1-u_{i({\bf u})} \choose 2} & \hdotsfor{2}&
    {k+i({\bf u})-1-u_{i({\bf u})}\choose 2k-2}\\
    \vdots &    \vdots   &  \ddots      &    & \vdots \\
    {k-u_1\choose 0} & {k-u_1\choose 2} &\hdotsfor{2} & {k-u_1\choose
      2k-2}
  \end{vmatrix}\\
  \times (k+i({\bf u})-1)_{u_{i({\bf u})}} (k+i({\bf u})-2)_{u_{i({\bf u})-1}}\dotsm(k)_{u_1}.
\end{multline}
Here \index{$(x)_n$}$(x)_n$ is the falling factorial
$x(x-1)\dots(x-n+1)$. The last factor, the product of falling
factorials, is a polynomial of degree $|\lambda|$ in $k$. Expressions~\eqref{eq:76}
and~\eqref{eq:78} for the lower terms are the analogue of \eqref{eq:159} for
the leading coefficient. The difference is the presence of
$u_1,\dotsc,u_{i({\bf u})}$ in the determinant and the appearance of the product
of falling factorials. The latter are accounted for by the fact that the
$(2k-i)!$ is not entirely cancelled by the numerator of the binomial
coefficients in the last $i({\bf u})$ rows.

We study the above determinant in the next section. To apply the formulas of
that section, we require $u_1 \geq u_2 \geq \ldots$, which does not typically
hold for the terms in the sum of~\eqref{eq:76}. However, by swapping adjacent
rows, we can arrange that these inequalities hold. More precisely, say that
$u_m < u_{m+1}$. We can assume that, in fact, $u_m +2 \leq u_{m+1}$ since if
$u_m+1 = u_{m+1}$ then the $m$-th and $m+1$-st rows from the bottom of the
matrix coincide, and such terms are excluded from~\eqref{eq:76} since the
determinant in such cases is $0$.

Consider what happens when we swap the $m$-th and $m+1$-st rows from the
bottom. The binomial coefficient $k+m-1-u_m \choose 2j-2$ gets switched with
$k+m-u_{m+1} \choose 2j-2$ at a cost of a sign change to the determinant. On
the other hand, the new determinant is of the same form, but
with $\bf u$ replaced by $\bf u'$,
where $u'_j=u_j$ for all $j$, except for $u'_m = u_{m+1}-1$ and $u'_{m+1} =
u_m+1$. Thus we have reversed the inequality, i.e. $u'_m \geq u'_{m+1}$. Notice also
that this swapping also satisfies $\sum u'_j = \sum u_j = |\lambda|$.

Therefore, continuing in this fashion, any given determinant in the sum
in~\eqref{eq:78} is equal, up to a power of $-1$, to the same
kind of determinant but with ${\bf u}$ replaced
by, say, $\alpha({\bf u})$,
where $\alpha$ is a partition of $|\lambda|$, i.e. with $\alpha_1 \geq \alpha_2
\geq \ldots \geq 0$. Let the power of $-1$ introduced by
the row swaps that take ${\bf u}$ to $\alpha({\bf u})$ be denoted by $n({\bf
a})$. Thus, a given determinant in~\eqref{eq:78} is equal, on performing the
row swaps, to
\begin{equation}
    (-1)^{n({\bf u})+ {k \choose 2}} D_{\alpha({\bf u})}(k),
    \label{eq:one term}
\end{equation}
where $D$ is the determinant defined in~\eqref{eq:i1}. The extra $(-1)^{\binom
k 2}$ arises because the columns of $D$ in~\eqref{eq:i1} are in the reverse
order from the determinants in~\eqref{eq:78}.

Therefore,
returning to equations~\eqref{eq:sumofintegrals},~\eqref{generalpartition},
and~\eqref{eq:194}, we have, on simplifying, that the coefficient $c_{\pm}(r,k)$ of
$x^{\frac{k(k+1)}2-r}$ in $Q_{\pm}(k,x)$ can be expressed as:
\begin{multline}
    \label{eq:formula for c(r,k)}
        \frac{a_k}{2^{\frac{k(k+1)}2-r}}
        \prod_{j=0}^{k-1}\frac{(2j)!}{(k+j)!}
    \\
    \times
    \sum_{|\lambda|=r}
    b^\pm_\lambda(k)
    \sum_{\bf u}{^{'}} (-1)^{n({\bf u})} D_{\alpha({\bf u})}(k)
  \times (k+i({\bf u})-1)_{u_{i({\bf u})}} (k+i({\bf u})-2)_{u_{i({\bf u})-1}}\dotsm(k)_{u_1}.
\end{multline}

In Theorem~\ref{thm:i2} and Corollary~\ref{cor:1} we show that, for $k\geq
\max(l(\alpha), \alpha_1)$,
\begin{equation}
    \label{eq: D alpha}
    D_\alpha(k) = 2^{ {k\choose 2}-r} P_{\alpha}(k),
\end{equation}
where $P_{\alpha}(k)$ is a polynomial in $k$ of degree $|\alpha|$. Theorem~\ref{thm:i2}
also gives a formula for determining the polynomials $P$.
Hence
\begin{equation}
    \label{eq:formula for c(r,k) b}
    c_\pm(r,k) = 
    \left(
        \frac{a_k}{2^k}
        \prod_{j=0}^{k-1}\frac{(2j)!}{(k+j)!}
    \right) \sum_{|\lambda|=r}
    b^\pm_\lambda(k) N_\lambda(k),
\end{equation}
where
\begin{equation}
    \label{eq:N}
    N_\lambda(k) =
    \sum_{\bf u}{^{'}} (-1)^{n({\bf u})} P_{\alpha({\bf u})}(k)
  \times (k+i({\bf u})-1)_{u_{i({\bf u})}} (k+i({\bf u})-2)_{u_{i({\bf u})-1}}\dotsm(k)_{u_1}.
\end{equation}
The sum is
over distinct permutations ${\bf u}=(u_k,\ldots,u_1)$ formed from the partition $\lambda$ by
appending $0$'s if necessary. Furthermore, we have restricted to permutations such
that $i({\bf u}) \leq |\lambda|$, and have also excluded $\bf u$ where the
corresponding matrix has any identical rows. Finally, for a given $\bf u$ to
appear in the sum we have assumed that $i({\bf u}) \leq k$, and to apply~\eqref{eq: D alpha},
we also required that $k\geq \max(l(\alpha({\bf u})), \alpha({\bf u})_1)$.

We show that the latter assumption can
be removed. First note that $i({\bf u})=l(\alpha({\bf u}))$, because our
swapping procedure that replaces a given $\bf u$ with $\bf u'$
has $i({\bf u'})=i({\bf u})$. Next, Corollary~\ref{cor:2} tells us that
$P_\alpha(k)$ vanishes for $\alpha_1 \leq k \leq l(\alpha)-1$. Furthermore,
\begin{equation}
    \label{eq:falling}
    (k+i({\bf u})-1)_{u_{i({\bf u})}} (k+i({\bf u})-2)_{u_{i({\bf u})-1}}\dotsm(k)_{u_1}
\end{equation}
vanishes if $0 \leq k < \alpha_1$ as can be seen by examining the factor associated
to $\alpha_1$: let $u_j$ be the term that, under our swapping procedure, gets swapped
down to $\alpha_1$. The corresponding falling factorial is $(k+j-1)_{u_j}$.
But $\alpha_1 = u_j - (j-1)$, because $u_j$ gets moved down $j-1$
rows to the bottom row. Therefore,
\begin{equation}
    (k+j-1)_{u_j} = (k+j-1)_{\alpha_1+j-1} = (k+j-1) (k+j-2) \dots (k-\alpha_1+1)
\end{equation}
which is divisible by
\begin{equation}
    \label{eq:divisibility part ii}
    k (k-1) \dots (k-\alpha_1+1).
\end{equation}
Thus, we have shown that
\begin{equation}
    \label{eq:N term}
    P_{\alpha({\bf u})}(k)
    \times (k+i({\bf u})-1)_{u_{i({\bf u})}} (k+i({\bf u})-2)_{u_{i({\bf u})-1}}\dotsm(k)_{u_1}
\end{equation}
vanishes for $0 \leq k < \max(l(\alpha({\bf u})),\alpha({\bf u})_1)$.
We can, therefore, ignore, in~\eqref{eq:N},
the condition that $k\geq \max(l(\alpha({\bf u})), \alpha({\bf u})_1)$, since, in including terms with
$k< \max(l(\alpha({\bf u})), \alpha({\bf u})_1)$,
the corresponding summand in~\eqref{eq:N} vanishes.

Hence, $N_\lambda(k)$ is given by a sum over a {\em fixed}, i.e. depending only on $\lambda$
but not on $k$, number of terms $\bf u$. Each term is a polynomial of degree $2|\lambda|$
in $k$, thus $N_\lambda(k)$ is a polynomial in $k$ of degree $\leq 2|\lambda|$.

This completes the proof of Theorem \ref{thm:maintheorem}.

As an example, We compute $N_{(2,1,1)}(k)$ using \eqref{eq:N}. In this case, the
sum \eqref{eq:N} is over the 12 distinct permutations of $(2,1,1,0)$. We can truncate
at 4 terms because $|\lambda|=4$, and $i({\bf u}) \leq |\lambda|$.
Of these 12 permutations, only $(2,1,1,0)$, $(0,2,1,1)$, $(1,0,2,1)$,
and $(1,1,0,2)$ give non zero determinants. The sign $(-1)^{n({\bf u})}$ is 1 for
$(2,1,1,0)$, and -1 for the rest. We have
\begin{align}
    \label{eq:3}
    &N_{(2,1,1)}(k)=P_{(2,1,1)}\, (k)_2 (k+1)_1 (k+2)_1 - P_{(1,1,1,1)}\,
    (k+1)_2 (k+2)_1 (k+3)_1 \notag \\
    \notag & {}\quad - P_{(1,1,1,1)}\,  (k)_1 (k+2)_2 (k+3)_1 - P_{(1,1,1,1)}\,  (k)_1
    (k+1)_1 (k+3)_2 \\
    =& \frac{1}{8} (k-2)(k+3)(k^2+k-4)\times k(k-1) (k+1)(k+2)
    -\frac{1}{24}(k-3)(k-2)(k-1)(k+4)\notag \\&\times(
        (k+1)k(k+2)(k+3)+k(k+2)(k+1)(k+3)+k(k+1)(k+3)(k+2)
    ) \notag \\
    = &k (k - 1) (k - 2) (k + 3) (k + 2) (k + 1)
\end{align}

Having shown that $N_\lambda(k)$ is a polynomial in $k$ of degree $\leq
2|\lambda|$, we can determine it either using formula~\eqref{eq:N} and
the formulas in Theorem~\ref{thm:i2} for the polynomials $P$,
or else by evaluating~\eqref{eq:76} for $2|\lambda|+1$ values of $k$
and applying polynomial interpolation. More specifically,
we can work back from~\eqref{eq:76} to~\eqref{eq:194}, and divide
by $\frac{a_k}{2^k} \prod_{j=0}^{k-1}\frac{(2j)!}{(k+j)!}$ to get the formula
\begin{equation}
    \label{eq:N direct}
    N_\lambda(k) =
    \left(\frac{-1}{2}\right)^{k(k-1)/2} 2^{|\lambda|}
    \prod_{j=0}^{k-1}\frac{(k+j)!}{(2j)!}
    \sum_{\bf u}{^{'}} \left|
    \begin{array}{cccccc}
      \frac{1}{(2k-1)!} & \frac{1}{(2k-3)!} &\hdotsfor{3} & \frac{1}{1!} \\
      \frac{1}{(2k-2)!} & \frac{1}{(2k-4)!} & \hdotsfor{3} & \frac{1}{0!}\\
      \vdots            & \vdots            &   & & & \vdots \\
      \frac{1}{(k+i({\bf u}))!} & \frac{1}{(k-i({\bf u})-2)!}       && &        &\vdots\\
      &      && &        & \\
      \hdashline\\
      \frac{1}{(k+i({\bf u})-1-u_{i({\bf u})})!} & \frac{1}{(k+i({\bf u})-3-u_{i({\bf u})})!}  &     & &        &\vdots \\
      \vdots            & \vdots            &   &  && \vdots \\
      \frac{1}{(k-u_1)!} & \frac{1}{(k-u_1-2)!} &\hdotsfor{3}&0
    \end{array}
    \right|.
\end{equation}
This formula can be used for a specific choice of $\lambda$ and several values of $k$ to create a table
of values of $N_\lambda(k)$ to which polynomial interpolation can be applied. Table~\ref{tab:N_lambda}
lists the polynomials $N_\lambda(k)$ for all $|\lambda|\leq 7$.

\begin{table}[h!]
\centerline{
\begin{tabular}{|c|c|c|}
\hline
$\lambda$ & $N_\lambda(k)/r_\lambda(k)$ & $r_\lambda(k)$ \\
\hline
$[1]$ & $k+1$ & $(k)_1$ \\ \hline
$[1, 1]$ & $(k+2)(k+1)$ & $(k)_2/2$ \\ \hline
$[2]$ & $0$ & $(k)_1$ \\ \hline
$[1, 1, 1]$ & $(k+3)(k+2)(k+1)$ & $(k)_3/6$ \\ \hline
$[2, 1]$ & $(k+2)(k+1)$ & $(k)_2$ \\ \hline
$[3]$ & $-(k-1)(k+2)(k+1)$ & $(k)_1$ \\ \hline
$[1, 1, 1, 1]$ & $(k+4)(k+3)(k+2)(k+1)$ & $(k)_4/24$ \\ \hline
$[2, 1, 1]$ & $2(k+3)(k+2)(k+1)$ & $(k)_3/2$ \\ \hline
$[2, 2]$ & $0$ & $(k)_2/2$ \\ \hline
$[3, 1]$ & $-(k-2)(k+3)(k+2)(k+1)$ & $(k)_2$ \\ \hline
$[4]$ & $0$ & $(k)_1$ \\ \hline
$[1, 1, 1, 1, 1]$ & $(k+5)(k+4)(k+3)(k+2)(k+1)$ & $(k)_5/120$ \\ \hline
$[2, 1, 1, 1]$ & $3(k+4)(k+3)(k+2)(k+1)$ & $(k)_4/6$ \\ \hline
$[2, 2, 1]$ & $4(k+3)(k+2)(k+1)$ & $(k)_3/2$ \\ \hline
$[3, 1, 1]$ & $-(k-3)(k+4)(k+3)(k+2)(k+1)$ & $(k)_3/2$ \\ \hline
$[3, 2]$ & $-2(k-2)(k+3)(k+2)(k+1)$ & $(k)_2$ \\ \hline
$[4, 1]$ & $-2(k-2)(k+3)(k+2)(k+1)$ & $(k)_2$ \\ \hline
$[5]$ & $2(k-1)(k-2)(k+3)(k+2)(k+1)$ & $(k)_1$ \\ \hline
$[1, 1, 1, 1, 1, 1]$ & $(k+6)(k+5)(k+4)(k+3)(k+2)(k+1)$ & $(k)_6/720$ \\ \hline
$[2, 1, 1, 1, 1]$ & $4(k+5)(k+4)(k+3)(k+2)(k+1)$ & $(k)_5/24$ \\ \hline
$[2, 2, 1, 1]$ & $10(k+4)(k+3)(k+2)(k+1)$ & $(k)_4/4$ \\ \hline
$[2, 2, 2]$ & $0$ & $(k)_3/6$ \\ \hline
$[3, 1, 1, 1]$ & $-(k-4)(k+5)(k+4)(k+3)(k+2)(k+1)$ & $(k)_4/6$ \\ \hline
$[3, 2, 1]$ & $-(k+3)(k+2)(k+1)(3k^2+3k-40)$ & $(k)_3$ \\ \hline
$[3, 3]$ & $(k-2)(k-4)(k+5)(k+3)(k+2)(k+1)$ & $(k)_2/2$ \\ \hline
$[4, 1, 1]$ & $-4(k+3)(k+2)(k+1)(k^2+k-10)$ & $(k)_3/2$ \\ \hline
$[4, 2]$ & $0$ & $(k)_2$ \\ \hline
$[5, 1]$ & $2(k-2)(k+3)(k+2)(k+1)(k^2+k-10)$ & $(k)_2$ \\ \hline
$[6]$ & $0$ & $(k)_1$ \\ \hline
$[1, 1, 1, 1, 1, 1, 1]$ & $(k+7)(k+6)(k+5)(k+4)(k+3)(k+2)(k+1)$ & $(k)_7/5040$ \\ \hline
$[2, 1, 1, 1, 1, 1]$ & $5(k+6)(k+5)(k+4)(k+3)(k+2)(k+1)$ & $(k)_6/120$ \\ \hline
$[2, 2, 1, 1, 1]$ & $18(k+5)(k+4)(k+3)(k+2)(k+1)$ & $(k)_5/12$ \\ \hline
$[2, 2, 2, 1]$ & $30(k+4)(k+3)(k+2)(k+1)$ & $(k)_4/6$ \\ \hline
$[3, 1, 1, 1, 1]$ & $-(k-5)(k+6)(k+5)(k+4)(k+3)(k+2)(k+1)$ & $(k)_5/24$ \\ \hline
$[3, 2, 1, 1]$ & $-2(k+4)(k+3)(k+2)(k+1)(2k^2+2k-45)$ & $(k)_4/2$ \\ \hline
$[3, 2, 2]$ & $-10(k-3)(k+4)(k+3)(k+2)(k+1)$ & $(k)_3/2$ \\ \hline
$[3, 3, 1]$ & $(k-3)(k-5)(k+6)(k+4)(k+3)(k+2)(k+1)$ & $(k)_3/2$ \\ \hline
$[4, 1, 1, 1]$ & $-6(k+4)(k+3)(k+2)(k+1)(k^2+k-15)$ & $(k)_4/6$ \\ \hline
$[4, 2, 1]$ & $-10(k-3)(k+4)(k+3)(k+2)(k+1)$ & $(k)_3$ \\ \hline
$[4, 3]$ & $5(k-2)(k-3)(k+4)(k+3)(k+2)(k+1)$ & $(k)_2$ \\ \hline
$[5, 1, 1]$ & $2(k-3)(k+4)(k+3)(k+2)(k+1)(k^2+k-15)$ & $(k)_3/2$ \\ \hline
$[5, 2]$ & $5(k-2)(k-3)(k+4)(k+3)(k+2)(k+1)$ & $(k)_2$ \\ \hline
$[6, 1]$ & $5(k-2)(k-3)(k+4)(k+3)(k+2)(k+1)$ & $(k)_2$ \\ \hline
$[7]$ & $-5(k-1)(k-2)(k-3)(k+4)(k+3)(k+2)(k+1)$ & $(k)_1$ \\ \hline
\end{tabular}
}
\caption[$N_\lambda(k)$]
{We display the polynomials $N_\lambda(k)$, for all $|\lambda| \leq 7$.
Because each monomial of $m_\lambda(z)$ contributes the same
to~\eqref{eq:sumofintegrals}, $N_\lambda(k)$ has, as a factor, the polynomial:
$r_\lambda(k):=\binom{k}{l(\lambda)}{l(\lambda)\choose
m_1(\lambda),m_2(\lambda),\dotsc} = (k)_{l(\lambda)}/(m_1(\lambda)!
m_2(\lambda)!\ldots)$, where $(k)_m = k(k-1)\ldots(k-m+1)$. Therefore, rather
than display $N_\lambda(k)$, here we list $N_\lambda(k)/r_\lambda(k)$.
}\label{tab:N_lambda}
\end{table}

\section{The coefficients $b^\pm_\lambda(k)$}
\label{sec:b}

In order to compute the multivariate Taylor expansion of~\eqref{eq:multivariateM},
we consider the series expansion of its logarithm. We first examine the
arithmetic product, and let
\begin{equation}
    \label{eq:log A}
    \log(A_k(z_1, \ldots, z_k)/a_k)
    =: \sum_{r=1}^\infty \sum_{|\lambda| = r} B_\lambda(k) m_\lambda(z).
\end{equation}
We start the sum at $r=1$ because the division by $a_k$ makes the constant term 0.
Now, the lhs is symmetric in the $z_i$'s, and we can find $B_\lambda(k)$ by
applying
\begin{equation}
    \label{eq:diff}
    \frac{1}{\lambda_1! \lambda_2! \ldots \lambda_l!}
    \frac{\partial^{\lambda_1}}{\partial z_1^{\lambda_1}}
    \frac{\partial^{\lambda_2}}{\partial z_2^{\lambda_2}}
    \ldots
    \frac{\partial^{\lambda_l}}{\partial z_l^{\lambda_l}},
\end{equation}
where $l=l(\lambda)$,
and setting $z_1 = \ldots = z_k = 0$. Since the partial derivatives do
not involve $z_{l+1},\ldots,z_k$ we can set these to 0 before the differentiation.
Thus, by~\eqref{eq:Ak}, $B_\lambda(k)$ is equal to~\eqref{eq:diff} applied to
\begin{multline}
    \label{eq:log A l}
    -\log(a_k)
    + \sum_p
    \sum_{1\leq i\leq j \leq l} \log \left(1-\frac{1}{p^{1+z_i+z_j}} \right)
    +
    \sum_{1\leq i \leq l} (k-l) \log \left(1-\frac{1}{p^{1+z_i}} \right) \\
    +
    \log
    \left(\frac 1 2
        \left(
            \prod_{j=1}^{l}
            \left(1-\frac 1 {p^{\frac 1 2 +z_j}} \right)^{-1}
            \left(1-\frac 1 {p^{\frac 1 2 }} \right)^{l-k}
             +
            \prod_{j=1}^{l}
            \left(1+\frac 1 {p^{\frac 1 2 +z_j}} \right)^{-1}
            \left(1+\frac 1 {p^{\frac 1 2 }} \right)^{l-k}
        \right)
        +\frac 1 p
    \right)
    - \log \left(1+\frac 1 p \right),
\end{multline}
evaluated at $z_1=\ldots=z_l=0$. Likewise, we can find the coefficients of
the expansions
\begin{equation}
    \label{eq:log X}
    -\frac{1}{2} \sum_{j=1}^k \log X(1/2+z_j,a)
    =: \sum_{r=1}^\infty \sum_{|\lambda| = r} f^{\pm}_\lambda(k) m_\lambda(z),
\end{equation}
where $a=1$ for $f^-$ and $0$ for $f^+$, and of
\begin{equation}
    \label{eq:log zeta}
    \sum_{1\leq i\leq j \leq k} \log \left(\zeta(1+z_i+z_j)(z_i+z_j)\right)
    =: \sum_{r=1}^\infty \sum_{|\lambda| = r} g_\lambda(k) m_\lambda(z),
\end{equation}
by applying~\eqref{eq:diff}, at $z_1=\ldots=z_l=0$, to
\begin{equation}
    \label{eq:log X l}
    -\frac{1}{2} \sum_{j=1}^l \log X(1/2+z_j,a),
\end{equation}
and to
\begin{equation}
    \label{eq:log zeta l}
    \sum_{1\leq i\leq j \leq l} \log \left(\zeta(1+z_i+z_j)(z_i+z_j)\right)
    +\sum_{1\leq i \leq l} (k-l)\log \left(\zeta(1+z_i)z_i\right),
\end{equation}
respectively.

Next, by composing the three series expansions~\eqref{eq:log A}, \eqref{eq:log
X}, \eqref{eq:log zeta} with the series for the exponential function, we can
derive formulas for the coefficients $b^{\pm}_\lambda(k)$. Example formulas,
for $b_{(1)}^\pm(k)$ and $b_{(1,1)}^\pm(k)$, are displayed in the introduction.

To obtain numerical approximations to $b^{\pm}_\lambda(k)$ for specific choices
of $k$ and $\lambda$ one needs to compute infinite sums over primes where the
summand is a rational function of $p^{1/2}$ times $\log(p)^{|\lambda|}$. This
can be achieved to high precision using Mobius inversion as described, in the
context of the moment polynomials of the Riemann zeta function, in Section 4.1
of~\cite{CFKRS2}. In this fashion, and using~\eqref{eq:147}, we computed the
values of $c_{\pm}(r,k)$, for $r\leq 10$ and $k\leq 9$, given in
Tables~\ref{tab:cminusrk} and \ref{tab:cplusrk}.

\section{Determinant of a matrix of binomial coefficients}
\label{sec:determinant}

\begin{proof}[Proof of Theorem \ref{thm:i2}]

We shall first prove \eqref{eq:i6}, and use it to prove \eqref{eq:i4}. \\

{\em Proof of \eqref{eq:i6}}:
\\

For a $k$-tuple  $(\alpha_1,\dotsc,\alpha_k)$ and $x = (x_1, \dots ,x_k)$, let
$x^\alpha$ denote the monomial $x_1^{\alpha_1}\dots x_k^{\alpha_k}$.  For a
partition $\lambda$ of length less than or equal to $k$, $x^\lambda$ can be
defined by appending zeros after the positive elements of $\lambda$ to make it
a $k$-tuple.

Reversing the rows of the matrix in \eqref{eq:i1}, we see that
\begin{equation}
  \label{eq:i5}
    D_\lambda(k)=(-1)^{\binom k 2}\det \left( \binom{k+i-1-\lambda_{i}}{2k-2j}\right)_{1\leq i,j\leq k}.
\end{equation}
The $(i,j)$th entry of the matrix in \eqref{eq:i5} can be written using the
coefficient operator defined in \eqref{eq:i55}. Let $x=(x_1,\dotsc,x_k)$. Then
\begin{equation}
    \label{eq:i2}
    D_\lambda(k) =(-1)^{\binom k 2} \det \left(
    [x_j^{2k-2j}](1+x_j)^{k+i-1-\lambda_{i}} \right)_{1\leq i,j \leq k}.
\end{equation}
Noticing that column $j$ only involves $x_j$, we can  move all the $[x_j^{2k-2j}]$
in front of the determinant to get
\begin{align}
    \notag & (-1)^{\binom k 2} [x^{2\delta_k}] \det \left(
    (1+x_j)^{k+i-1-\lambda_{i}}\right)_{1\leq i,j \leq k} \\ &=(-1)^{\binom k
    2}  [x^{2\delta_k}] \det \left( (1+x_j)^{-(k-i+\lambda_i)} \right)
    \prod_{l=1}^k (1+x_l)^{2k-1}.
\end{align}
The determinant in \eqref{eq:i2} can be written in terms of
$a_{\delta_k}$ and $s_\lambda$ defined in \eqref{eq:i51} and \eqref{eq:i52},
\begin{multline}
  \label{eq:i9}
 D_\lambda(k)=(-1)^{\binom k 2}
[x^{2\delta_k}]  a_{\lambda+\delta_k} (\tfrac
1{1+x_1}, \dotsc, \tfrac 1 {1+x_k} ) \prod_{l=1}^k (1+x_l)^{2k-1}\\
= (-1)^{\binom k 2}[x^{2\delta_k}]  a_{\delta_k} (\tfrac
1{1+x_1}, \dotsc, \tfrac 1 {1+x_k} )s_{\lambda} (\tfrac
1{1+x_1}, \dotsc, \tfrac 1 {1+x_k} ) \prod_{l=1}^k (1+x_l)^{2k-1}.
\end{multline}
But \eqref{eq:i64} gives $a_{\delta_k}(x_1,\dotsc,x_k)$ explicitly. Hence
\begin{align}
    \label{eq:i10}\nonumber
    a_{\delta_k} (\tfrac
    1{1+x_1}, \dotsc, \tfrac 1 {1+x_k} ) &= \prod_{1\leq i < j \leq k}
    \left( \frac{1}{1+x_i} -\frac{1}{1+x_j}\right)
    =\prod_{1\leq i < j \leq k } \frac{(1+x_j) - (1+x_i)}{(1+x_j)(1+x_i)} \notag \\
    & =\frac{\prod_{1\leq i < j \leq k } (x_j-x_i)}{\prod_{j=1}^{k}
    (1+x_j)^{k-1}}
    =  \frac{(-1)^{\binom k 2} a_{\delta_k}(x)}{
      \prod_{j=1}^{k}(1+x_j)^{k-1} }.
\end{align}
Using \eqref{eq:i10} in \eqref{eq:i9},  we have
\begin{equation}
  \label{eq:i13}
  D_\lambda(k) =  [x^{2\delta_k}] a_{\delta_k}(x_1,\dotsc,x_k)
  s_\lambda(\tfrac{1}{1+x_1},\dotsc,\tfrac{1}{1+x_k})
  \prod_{l=1}^k (1+x_l)^{k}.
\end{equation}

We shall now express $s_\lambda$
as a coefficient in a polynomial which is easier to work with. The
dual Jacobi-Trudi identity,
\eqref{eq:i54}, gives
\begin{equation}
  \label{eq:i15}
  s_\lambda = \det \left(e_{\mu_i-i +j}\right)_{1\leq  i,j\leq
    n},
\end{equation}
where $(\mu_1,\ldots,\mu_n)$ is the conjugate
partition of $\lambda$, and $n=l(\mu)$.

Expanding the determinant, we get
\begin{equation}
    \label{eq:i16}
    s_\lambda = \sum_{\sigma\in S_n}\sgn(\sigma) \prod_{i=1}^n
    e_{\mu_i - i +\sigma(i)}(x).
\end{equation}
From \eqref{eq:i44}, we have
$e_r=[t^r]E(t)$. We rewrite \eqref{eq:i16} using
this notation. Let $t=(t_1,\dotsc,t_n)$. Then
\begin{align}
    \label{eq:i20}
    \nonumber
     s_\lambda(x) &= \sum_{\sigma\in S_n} \sgn(\sigma) [t_1^{\mu_1-1+\sigma(1)}\dots
    t_n^{\mu_n -n +\sigma(n)}]\prod_{i=1}^nE(t_i)\\
    \nonumber &=  [t_1^{\mu_1}\dots
    t_n^{\mu_n}]\prod_{i=1}^nE(t_i)
    \sum_{\sigma\in S_n} \left(\sgn(\sigma)
    \prod_{i=1}^n t_i^{i-\sigma(i)}\right).
\end{align}
Next, pull out $\prod t_i^i$ from the sum, and multiply and divide by $\prod t_i^n$, to get
\begin{align}
    &[t_1^{\mu_1}\dots
    t_n^{\mu_n}]\prod_{i=1}^n E(t_i)\prod_{1\leq i < j \leq n}(t_i
    -t_j) \prod_{l=1}^n t_l^{l-n} \notag \\
    \nonumber  &= [t^{\mu+\delta_n}] \prod_{i=1}^n E(t_i)  \prod_{1\leq i < j \leq n }(t_i
    -t_j)
    \\
    & = [t^{\mu+\delta_n}]a_{\delta_n}(t) \prod_{i=1}^n E(t_i).
\end{align}
Here we have also used the Vandermonde determinant (up to $(-1)^{n\choose2}$):
\begin{equation}
    \sum_{\sigma\in S_n} \sgn(\sigma)
    \prod_{i=1}^n t_i^{n-\sigma(i)} = \prod_{1\leq i < j \leq n }(t_i -t_j).
\end{equation}

We have expressed $s_\lambda(x)$ as a coefficient in a
polynomial. Substituting \eqref{eq:i20} for $s_\lambda$  in
\eqref{eq:i13}, and using
the product form of $E(t)$, \eqref{eq:i44}, we have
\begin{align}
  \label{eq:i21}
\nonumber  D_\lambda(k)& = [t^{\mu+\delta_n} x^{2\delta_k}] a_{\delta_k}(x)
  a_{\delta_n}(t)
\left[\prod_{i=1}^n \left( \prod_{l=1}^k (1+
    \tfrac{t_i}{1+x_l})\right) 
 \right]\prod_{l=1}^k (1+x_l)^k\\
\nonumber &=[t^{\mu+\delta_n} x^{2\delta_k}] a_{\delta_k}(x)  a_{\delta_n}(t) 
\prod_{l=1}^k\left( (1+x_l)^{k-n} \prod_{i=1}^n (1+x_l+t_i)\right)
\\
 & =[t^{\mu+\delta_n} x^{2\delta_k}] a_{\delta_k}(x)  a_{\delta_n}(t) 
\prod_{l=1}^k\left( (1+x_l)^{k-n} \prod_{i=1}^n
  (1+\tfrac{x_l}{1+t_i})\right) \prod_{i=1}^n(1+t_i)^k
\end{align}
Applying the dual Cauchy identity  \eqref{eq:i56} to the double product on the rhs above
gives
\begin{equation}
  \label{eq:i12}
  \prod_{l=1}^k\left( (1+x_l)^{k-n} \prod_{i=1}^n
  (1+\tfrac{x_l}{1+t_i})\right) = \sum_\lambda
s_\lambda(x_1,\dotsc,x_k)s_{\lambda'}(1,\dotsc,1,\tfrac{1}
{1+t_1},\dotsc,\tfrac{1}{1+t_n}). 
\end{equation}
The number of $1$'s in the second factor on the right hand side of
\eqref{eq:i12} is $k-n$.
Recall from \eqref{eq:i52} that $a_{\delta}s_\lambda=a_{\delta+\lambda}$. Hence \eqref{eq:i21} equals
\begin{equation}
  \label{eq:i14}
  [t^{\mu+\delta_n}][x^{2\delta_k}] a_{\delta_n}(t) \prod_{i=1}^n
  (1+t_i)^k \sum_\lambda a_{\lambda\! +\! \delta_k}(x_1,\dotsc,x_k)
  s_{\lambda'}(1,\dotsc,1, \tfrac {1}{1+t_1},\dotsc, \tfrac
  {1}{1+t_n}).
\end{equation}
The monomial $x^{2\delta_k}$ occurs in the sum in \eqref{eq:i14} only
when $\lambda=\delta_k$. The coefficient of
$x^{2\delta_k}$ in $a_{2\delta_k}(x_1,\dotsc,x_k)$ is 1. Simplifying
\eqref{eq:i14}, we have
\begin{equation}
    \label{eq:i24}
    D_\lambda(k)=
    [t^{\mu+\delta_n}] a_{\delta_n}(t) \prod_{i=1}^n (1+t_i)^k
    s_{\delta_k}(1,\dotsc,1, \tfrac {1}{1+t_1},\dotsc, \tfrac
    {1}{1+t_n}).
\end{equation}
Note that we have used $\delta_k' =\delta_k$. Applying the formula
$s_{\delta_k}$ in~\eqref{eq:i68}, we have
\begin{equation}
    \label{eq:s explicit}
    s_{\delta_k}(1,\dotsc,1, \tfrac {1}{1+t_1},\dotsc, \tfrac
      {1}{1+t_n})=
    2^{\binom{k-n}2}
    \prod_{i=1}^n\left(  1+\frac 1{1+t_i} \right)^{k-n}
    \prod_{1\leq i < j \leq n}\left( \frac
    1 {1+t_i}+ \frac{1}{1+t_j}\right).
\end{equation}
The $2^{\binom{k-n}2}$ comes from pairing, in applying~\eqref{eq:i68}, the $k-n$ 1's. The middle factor arises
from matching each $1/(1+t_i)$ with $k-n$ 1's, and the last factor from matching all pairs of
distinct $1/(1+t_i), 1/(1+t_j)$. Substituting~\eqref{eq:s explicit}
into~\eqref{eq:i24}, and collecting the powers of $(1+t_i)$ gives
\begin{equation}
    D_\lambda(k) =[t^{\mu+\delta_n}] a_{\delta_n}(t)\ 2^{\binom{k-n}{2}}
    \left(\prod_{i=1}^n(1+t_i)(2+t_i)^{k-n} \right)
    \prod_{1\leq i < j \leq n}(2+t_i+t_j).
\end{equation}
Substituting $z_i=t_i/2$, and collecting powers of $2$ (note that
${k-n \choose 2} +(k-n)n + {n \choose 2}= {k \choose 2}$), we get
\begin{multline}
  \label{eq:i29}
    D_\lambda(k)=  [z^{\mu+\delta_n}] a_{\delta_n}(z) 2^{\binom k 2 + \binom n 2 -
    |\mu+\delta_n|} \prod_{i=1}^n(1+2z_i)(1+z_i)^{k-n} \\
    \times \prod_{1\leq i < j \leq n} (1+z_i+z_j).
\end{multline}
Here we have also used $a_{\delta_n}(t) = a_{\delta_n}(z) 2^{n \choose 2}$.
Since $|\delta_n|=\binom n 2$, and $|\mu|=|\lambda|$, this proves \eqref{eq:i6}.\\

{\em Proof of \eqref{eq:i4}}:\\

We now use~\eqref{eq:i6} to prove \eqref{eq:i4}. As above, let $z=(z_1,\dotsc,z_n)$.

Since Schur symmetric functions form a $\mathbb Z$-basis for the ring
of symmetric functions, the coefficient of $s_\gamma$ of a symmetric function
$F$ is well defined. We denote this coefficient by $[s_\gamma]F$.

For a symmetric polynomial  $F(z)$ in $n$-variables, and a partition $\gamma$
with length at most $n$, we have
\begin{equation}
  \label{eq:i7}
  [s_\gamma(z)] F(z) = [z^{\gamma+\delta_n}] a_{\delta_n}(z) F(z).
\end{equation}
This can be seen by writing $F(z)$ in terms of our Schur basis
\begin{equation}
    F(z) = \sum_\gamma v_\gamma s_\gamma(z).
\end{equation}
We wish to find the coefficient $v_\gamma$.
Multiplying by $a_{\delta_n}(z)$ and using~\eqref{eq:i52} gives
\begin{equation}
    a_{\delta_n}(z) F(z) = \sum_\gamma v_\gamma a_{ \gamma + \delta_n }(z).
\end{equation}
Now, the monomials in $a_{ \gamma + \delta_n }(z)$
are all distinct, and distinct from the monomials in
$a_{\gamma' + \delta_n }$ for any different partition $\gamma'$ of length at most
$n$. Furthermore, $z^{\gamma+\delta_n}$ appears in $a_{ \gamma + \delta_n }(z)$
with coefficient $1$, coming from the main diagonal of~\eqref{eq:i51}. Thus,
$v_\gamma$ is equal to the coefficient of $z^{\gamma+\delta_n}$ in
$a_{\delta_n}(z) F(z)$.

Therefore we can rewrite \eqref{eq:i6} as
\begin{equation}
    \label{eq:i59}
    D_\lambda(k)=  2^{\binom k 2 - |\lambda|}
    [s_{\mu}(z)] \prod_{1\leq i< j \leq n}
    \left(\frac{1+z_i+z_j}{(1+z_i)(1+z_j)}\right)
    \prod_{i=1}^n (1+2z_i)(1+z_i)^{k-1}.
\end{equation}
We shall work with the ring of symmetric functions $\Lambda$ instead of the ring
of symmetric polynomials in $n$ variables $\Lambda_n$. The right hand
side of \eqref{eq:i59} equals 
\begin{equation} 
  \label{eq:i60}
  2^{\binom k 2-|\lambda|}  [s_\mu(z)] \prod_{ 1\leq i < j}
  \left(\frac{1+z_i+z_j}{(1+z_i)(1+z_j)}\right)   \prod_{i\geq 1} (1+2z_i)(1+z_i)^{k-1}.
\end{equation}
Note that in \eqref{eq:i60}, we are looking at elements in the ring of
symmetric functions, $\Lambda$, i.e. as a product involving a countable number of
variables $z_1,z_2,\ldots$,
whereas in \eqref{eq:i59}, we were
considering the elements in the ring of symmetric polynomials in $n$
variables, $\Lambda_n$.

Applying $\omega$, and using \eqref{eq:i63} we obtain
\begin{equation}
  D_\lambda(k) =2^{\binom k 2 - |\lambda|} [s_\lambda(y)]\ \omega\! \left(
\prod_{1\leq i<j} \! \left( 1 -\frac{y_iy_j}{(1+y_i)(1+y_j)}\right)\!
\prod_{i\geq 1 }(1+2y_i)(1+y_i)^{k-1}
\right)
\end{equation}
We use the fact that $\exp (\log(1+u))=1+u$ to write the argument of
$\omega$ as formal power series:
\begin{multline}
\notag
D_\lambda(k)  =2^{\binom k 2 -|\lambda|}  [s_\lambda(y)]\ \omega\! \left(
\exp\sum_{a\geq 1}\frac 1 a\left( -\sum_{1\leq i<j} y_i^ay_j^a
  (1+y_i)^{-a}(1+y_j)^{-a} \right.\right.\\ \left. \left.
- (-2)^a \sum_{i\geq 1}y_i^a -(k-1)
(-1)^a\sum_{i\geq 1} y_i^a\right) \right)
\end{multline}
\begin{multline}
  \label{eq:i67}
\phantom{D_\lambda(k)}  =2^{\binom k 2 -|\lambda|}  [s_\lambda(y)]\
\omega\! \left(
\exp\sum_{a\geq 1}\frac 1 a\left( - \sum_{b,c\geq 0}
   \binom{-a}{b}\binom{-a}{c}
   \sum_{1\leq i<j} y_i^{a+b}y_j^{a+c} \right.\right.\\ \left.\left.
- (-2)^a \sum_{i\geq 1}y_i^a -(k-1)
(-1)^a\sum_{i\geq 1} y_i^a\right)\right).
\end{multline}
We can rewrite the argument of $\omega$ in \eqref{eq:i67} using power
symmetric functions;
\begin{multline}
D_\lambda(k)  =2^{\binom k 2 -|\lambda|}  [s_\lambda(y)]\ \omega\! \left(
\exp\sum_{a\geq 0}\frac 1 a\left( -\sum_{b,c\geq 0}
  \right.\right.\\  \left.\left.  \phantom{\sum_A}
   \binom{-a}{b}\binom{-a}{c}\tfrac 1 2 (p_{a+b}p_{a+c} -p_{2a+b+c})
- (-2)^a p_a -(k-1)
 (-1)^a p_a\right)\right).
\end{multline}
In \eqref{eq:i63} we have seen that $\omega(p_a) = (-1)^{a-1}p_a$. This gives
\begin{multline}
D_\lambda(k)  =2^{\binom k 2 -|\lambda|}  [s_\lambda(y)]
\exp\sum_{a\geq 0}\frac 1 a\left( -\sum_{b,c\geq 0}
   \binom{-a}{b}\binom{-a}{c}\right. \\ \left. \phantom{\sum_A}
   (-1)^{2a+b+c}\tfrac 1 2 (p_{a+b}p_{a+c} +p_{2a+b+c}) 
+2^a p_a +(k-1)
  p_a\right)
\end{multline}
\begin{equation}
  \label{eq:i72}
\phantom{D_\lambda(k) a }  =2^{\binom k 2 -|\lambda|}
[s_\lambda(y)]\prod_{1\leq i\leq j} \left( 
    1-\frac{y_iy_j}{(1-y_i)(1-y_j)} \right) \prod_{i\geq 1}
  (1-2y_i)^{-1} (1-y_i)^{-k+1}.
\end{equation}
If we isolate factors corresponding to $i=j$ in the first product, we
are able to cancel some factors in the second product. Simplifying, we
get
\begin{equation}
  \label{eq:i75}
  D_\lambda(k) =   2^{\binom k 2 -|\lambda|}
  [s_\lambda(y)]\prod_{1\leq i< j} \left(
    1-\frac{y_iy_j}{(1-y_i)(1-y_j)} \right) \prod_{i\geq 1} (1-y_i)^{-k-1}.
\end{equation}

To calculate the coefficient of $s_\lambda$ in \eqref{eq:i75}, we only
have to look at the projection in any $\Lambda_m$ such
that $m$ is greater than or equal to $l(\lambda)$. We can choose it to
be equal to $l(\lambda)$ (also equals $\mu_1$). Let $m=l(\lambda)$. Then
\begin{equation}
    D_\lambda(k)
    =
    2^{\binom k 2 -|\lambda|}  [s_\lambda(y)]\prod_{1\leq i < j\leq m}
    \left( 1-\frac{y_iy_j}{(1-y_i)(1-y_j)} \right)
    \prod_{i=1}^m (1-y_i)^{-k-1},
\end{equation}
which is equal to
\begin{equation}
  \label{eq:i74}
  2^{\binom k 2 -|\lambda|}  [s_\lambda(y)]\prod_{1\leq i < j\leq m} 
    (1-y_i-y_j) \prod_{i=1}^m
 (1-y_i)^{-k-m}.
\end{equation}
Another application of \eqref{eq:i7} proves \eqref{eq:i4}.

\end{proof}

\begin{proof}[Proof of Corollary~\ref{cor:1}]
It is immediate from  \eqref{eq:i4} or \eqref{eq:i6} that $P_\lambda(k)$ is a
polynomial in $k$ with integer values at integers of degree at most
$|\lambda|$. We will show that it is in fact of degree $|\lambda|$ and determine
its leading coefficient.

From \eqref{eq:i4}, the highest power of $k$ occurs when we pick as many
powers of $y_i$ as possible from the last product. This happens when none
of the $y_i$ are picked from $(1-y_i-y_j)$.  Note that
\begin{equation}
    \label{eq:binomial -}
    (1-y)^{-k-m} = 1 +(k+m) y +\frac{(k+m)(k+m+1)}{2!} y^2 + \ldots
    = \sum_{j=0}^\infty (k+m+j-1)_{j}\, y^j/j!
\end{equation}
The coefficient of the highest power of $k$ that appears in
the $j$-th term of this Taylor series is $1/j!$. Thus, the coefficient of $k^{|\lambda|}$
in $P_\lambda(k)$ equals
\begin{align}
    \label{eq:leading}
    \nonumber &  [y^{\lambda_{1+m-1}}_1\dots y_m^{\lambda_m}] \left( \prod_{1\leq i <
    j \leq m}(y_i -y_j)\right) \exp(y_1+\dots+y_m)\\
    \nonumber =& [y^{\lambda_{1+m-1}}_1\dots y_m^{\lambda_m}] \sum_{\sigma\in
    S_m}\sgn(\sigma) \left(\prod_{i=1}^m y_i^{m-\sigma(i)}
    e^{y_i}\right)\\
    \nonumber
    =& \sum_{\sigma\in S_m}\sgn(\sigma) \frac{1}{(\lambda_i-i+\sigma(i))!}
    = \det(1/(\lambda_i-i+j)!)_{m\times m} \\
    =&
    \frac{
        \prod_{1\leq i < j \leq m } (\lambda_i-\lambda_j-i+j)
    }
    {
        \prod_{1\leq i \leq m }(\lambda_i + m - i)!
    } = \frac{\chi^\lambda(1)}{|\lambda|!},
\end{align}
where $\chi^\lambda(1)$ is the degree of the irreducible representation
of $S_{|\lambda|}$ indexed by $\lambda$.
See example $6$ in Chapter I.7 of~\cite{macdonald_symmetric_1995} for
the last two equalities.


\end{proof}

\begin{proof}[Proof of Corollary~\ref{cor:2}]

We use equation~\eqref{eq:i6} which gives a formula for $P_\lambda(k)$. As part
of the process of identifying the coefficient of $z_1^{\mu_1+n-1}\dots z_n^{\mu_n}$ in
that formula, we focus on the coefficient of $z_1^{\mu_1+n-1}$. Now, $\mu_1 = l(\lambda)$,
and $n=\lambda_1$, hence $\mu_1+n-1 = l(\lambda)+\lambda_1-1$.
When we expand~\eqref{eq:i6}, some of the powers of $z_1$
come from the factor $(1+z_1)^{k-\lambda_1}$, and the rest from
\begin{equation}
    \label{eq:z_1 poly}
    \prod_{1 < j \leq \lambda_1} (z_1 -z_j)(1+z_1+z_j) (1+2z_1).
\end{equation}
Consider the terms arising from taking a $z_1^j$ from the above. Notice
that~\eqref{eq:z_1 poly} is a polynomial in $z_1$ of degree $2\lambda_1-1$, and thus
$0 \leq j \leq 2\lambda_1-1$. The remaining $l(\lambda)+\lambda_1-1-j$
powers of $z_1$ come from expanding $(1+z_1)^{k-\lambda_1}$ using the binomial theorem,
so that the term associated with a particular choice of $j$ is divisible by
\begin{equation}
    {k-\lambda_1 \choose l(\lambda)+\lambda_1-1-j}
    = \frac{ (k-\lambda_1)(k-\lambda_1-1)\ldots(k-2 \lambda_1-l(\lambda)+2+j) }
    {(l(\lambda)+\lambda_1-1-j)!}.
\end{equation}
For all $0 \leq j \leq 2\lambda_1-1$, this is divisible by
\begin{equation}
    \label{eq:z_1 coeff}
    (k-\lambda_1)(k-\lambda_1-1)\ldots(k-l(\lambda)+1).
\end{equation}
The coefficient of $z_1^{l(\lambda)+\lambda_1-1}=z_1^{\mu_1+n-1}$ in the
expression in~\eqref{eq:i6} is therefore divisible by~\eqref{eq:z_1 coeff}.
Thus, so is the coefficient of $z_1^{\mu_1+n-1}\dots z_n^{\mu_n}$.

The same analysis applied to~\eqref{eq:i4}, and using~\eqref{eq:binomial -} gives
that $P_\lambda(k)$ is divisible, for $l(\lambda)\leq \lambda_1$, by
\begin{equation}
    (k + l( \lambda )) \ldots (k + \lambda_1-1)(k + \lambda_1).
\end{equation}

\end{proof}


\section{Family of quadratic twists of elliptic curve L-functions}
\label{sec:elliptic}

Here we modify our techniques to the family of $L$-functions associated to the
quadratic twists of an elliptic curve over $\Q$. To keep things explicit,
we focus on the elliptic curve of conductor 11:
\begin{equation}
    \label{eq:E11}
    E_{11a}: y^2+y =x^3-x.
\end{equation}
The $L$-function of $E_{11a}$ is given by an Euler product of the form
\begin{equation}
    L_{11}(s)=
    \frac{1}{1-11^{-s-1/2 }}
    \prod_{p \neq 11}
    \frac{1}{1-a(p)p^{-s-1/2 }+p^{-2s}},
\end{equation}
which can be expanded into the Dirichlet series
\begin{equation}
    \sum_{n=1}^\infty \frac{a(n)}{n^{1/2 +s}}.
\end{equation}
The Dirichlet series above is absolutely convergent in $\Re{s}>1$.
The coefficients $a(n)$ can be obtained from the Fourier expansion of
the cusp form of weight two and level 11 given by
\begin{equation}
   \sum_{n=1}^\infty a(n) q^n=
    q \prod_{n=1}^\infty (1-q^n)^2 (1-q^{11n})^2,
\end{equation}
or, alternatively, by counting points on $E_{11a}$ over the fields $F_p$, $p$ prime.

The function $L_{11}(s)$ has analytic continuation to all of $\C$ and
satisfies the functional equation
\begin{equation}
   L_{11}(s) = X(s) L_{11}(1-s),
\end{equation}
where
\begin{equation}
   X(s) = \frac{\Gamma(3/2-s)}{\Gamma(s+1/2)}
                       \left(\frac{2\pi}{11^{1/2}}\right)^{2s-1}.
\end{equation}

The $L$-function associated to a quadratic twist of $E_{11a}$ has
a Dirichlet series of the form
\begin{equation}
    L_{11}(s,\chi_d)=\sum_{n=1}^\infty \frac{a(n)}{n^{1/2 +s}} \chi_d(n),
\end{equation}
where $d$ is a fundamental discriminant which we further assume satisfies $(d,11)=1$.
$L_{11}(s,\chi_d)$ satisfies the functional equation
\begin{equation}
    L_{11}(s,\chi_d) = \chi_d(-11)
                       |d|^{1-2s} X(s)
                       L_{11}(1-s,\chi_d).
\end{equation}
When considering the moments of $L_{11}(1/2 ,\chi_d)$ we should restrict
$L(s,\chi_d)$ to have an even functional equation, i.e. $\chi_d(-11)=1$, otherwise
$L(1/2,\chi_d)$ is trivially equal to $0$.
In~\cite{CFKRS}, $d$ was also restricted to being negative
since it allowed them to exploit a theorem of Kohnen and Zagier~\cite{KZ}
to easily gather numerical data for $L_{11}(1/2 ,\chi_d)$ with which to check their
conjecture.

When $d<0$, $\chi_d(-1)=-1$, hence, in order to have an even
functional equation, we require $\chi_d(11)=-1$, i.e.
$d=2,6,7,8,10 \mod 11$. CFKRS conjectured, see Section~5.3 of~\cite{CFKRS}, the
asymptotic expansion:
\begin{equation}
    \sum_{d \in S_-(X) \atop d =  2,6,7,8,10 \mod 11}
    L_{11}(1/2,\chi_d)^k \sim \frac{15}{11\pi^2}
      \frac{1}{X} \int_1^X \Upsilon_k(\log t) dt.
    \label{eqn:L11 CFKRS}
\end{equation}
The extra factor of $5/11$ on the rhs, compared to~\eqref{eq: moment asympt},
reflects the fact that the sum on the left
is over 5 out of 11 possible residue classes mod 11.
Here, $\Upsilon_k$ is the polynomial of degree $k(k-1)/2$ given by
the $k$-fold residue
\begin{equation}
    \label{eq:Upsilon}
    \Upsilon_k(x)=\frac{(-1)^{k(k-1)/2}2^k}{k!} \frac{1}{(2\pi i)^{k}}
    \oint \cdots \oint \frac{R_{11}(z_1,
    \dots,z_{k})\Delta(z_1^2,\dots,z_{k}^2)^2} {\displaystyle
    \prod_{j=1}^{k} z_j^{2k-1}} e^{x \sum_{j=1}^{k}z_j}\,dz_1\dots
    dz_{k} ,
\end{equation}
where
\begin{equation}
    \label{eq:R}
    R_{11}(z_1,\dots,z_k)=A_k(z_1,\dots,z_k)
    \prod_{j=1}^k X(1/2+z_j)^{-1/2}
    \prod_{1\le i < j\le k}\zeta(1+z_i+z_j),
\end{equation}
and, overloading the notation of Section~\ref{sec:CFKRS quadratic},
$A_k$ is the Euler product which is absolutely convergent for
$\sum_{j=1}^k |z_j|<\frac12 $,
\begin{equation}
    A_k(z_1,\dots,z_k) =
    \prod_p R_{11,p}(z_1,\ldots,z_k)
    \prod_{1\le i < j \le k}
    \left(1-\frac{1}{p^{1+z_i+z_j}}\right)
\end{equation}
with, for $p \neq 11$,
\begin{equation}
    R_{11,p} =
           \left(1+\frac 1 p\right)^{-1}
           \left(\frac 1 p +\frac{1}{2}
              \left(
                 \prod_{j=1}^k
                 \frac{1}{1-a(p)p^{-1-z_j}+p^{-1-2z_j}}
               + \prod_{j=1}^k
                 \frac{1}{1+a(p)p^{-1-z_j}+p^{-1-2z_j}}
              \right)
           \right)
\end{equation}
and
\begin{equation}
    R_{11,11} =
    \prod_{j=1}^k  \frac{1}{1+11^{-1-z_j}}.
\end{equation}
Note that, although here we are working with the specific elliptic curve
$E_{11a}$, CFKRS' recipe provides a similar conjecture for the quadratic twists of any elliptic
curve over $\Q$. For many examples, see the paper~\cite{CPRW}. The only difference
is in the conductor, in the local factors of $A_k$ for the primes dividing the
conductor, and in the allowed residue classes (and modulus) for $d$.

Next, $\Upsilon_k(x)$ is a polynomial of degree of $k(k-1)/2$ given by the
$k$-fold residue \reff{eq:Upsilon}. The degree works out smaller compared to
$Q_{\pm}(k,x)$ because the product of zetas in~\eqref{eq:R} involves fewer
zetas, i.e. the product over $i<j$ has $k \choose 2$ factors. Therefore,
we can write
\begin{equation} \label{eq:Upsilonexpansion}
    \Upsilon_k(x) = c_0(k) x^{k(k-1)/2} + c_1(k) x^{k(k-1)/2 - 1} + \ldots + c_{k(k-1)/2}(k).
\end{equation}
Also note that the exponential in~\eqref{eq:Upsilon} has an $x$ rather than $x/2$. This will impact
the powers of 2 that enter into our formulas for the coefficients $c_r(k)$.

To address the poles coming from the zeta-product $\prod \zeta(1+z_i+z_j)$ we absorb some
of the factors of $\Delta(z_1^2, \ldots, z_k^2) = \prod_{1 \leq i < j \leq k} (z_j - z_i)(z_j+z_i)$.
Thus,
\begin{align} \nonumber
    \Upsilon_k (x) & = \frac{(-1)^{k(k-1)/2}2^k}{k!} \frac{1}{(2\pi i)^k} \oint
    \ldots \oint A_k(z_1, \ldots, z_k) \prod_{j=1}^k X(1/2+z_j)^{-1/2}\\ & \times
    \prod_{1 \leq i < j \leq k} \zeta(1 + z_i + z_j) (z_i + z_j)
    \label{eq:NewUpsilonOld} \frac{\Delta(z_1, \ldots, z_k) \Delta(z_1^2, \ldots,
    z_k^2)}{\prod_{j=1}^kz_j^{2k - 1}} \exp(x \sum_{j=1}^k z_j)dz_1 \ldots dz_k. 
\end{align}

We overload notation again and set
\begin{equation}
    a_k:= A_k(0, \ldots, 0)
\end{equation}
and expand
\begin{equation}
    \label{eq:multivariateM} \frac{1}{a_k} A_k(z_1, \ldots, z_k) \prod_{j=1}^k
    X(1/2+z_j)^{-1/2}\prod_{1 \leq i < j \leq k} \zeta(1 + z_i + z_j) (z_i + z_j)
    =: \sum_{j=0}^\infty \sum_{|\lambda| = j} b_\lambda(k) m_\lambda(z),
\end{equation}
where, as before, $m_\lambda(z)$ is the monomial symmetric function for the partition
$\lambda$. The lhs above is holomorphic in a neighbourhood of $z_1=\ldots=z_k=0$,
because the poles from the zeta-product $\prod \zeta(1 + z_i + z_j)$ are
canceled by the product $\prod (z_i + z_j)$. We normalize by $a_k$ so that
the first coefficient is 1.

So \reff{eq:NewUpsilonOld} becomes
\begin{align} \nonumber
    \Upsilon_k (x) &= \frac{(-1)^{k(k-1)/2}2^k}{k!} \frac{a_k}{(2\pi i)^k} \oint
    \ldots \oint \sum_{j=0}^\infty \sum_{|\lambda| = j} b_\lambda(k) m_\lambda(z) \\
    & \times \frac{\Delta(z_1, \ldots, z_k) \Delta(z_1^2, \ldots,
    z_k^2)}{\prod_{j=1}^kz_j^{2k - 1}} \exp(x \sum_{j=1}^k z_j)dz_1 \ldots dz_k.
    \label{eq:NewUpsilon}
\end{align}
Comparing to equation~\eqref{eq:sumofintegrals} we notice three differences:
the extra $2^k$ in front of the integral, the $2k-1$ powers of each $z_j$,
rather than $2k$ powers, in the denominator, and the $x$ rather than $x/2$ in
the exponential. The first two differences are accounted for by the fact that the
product over zetas in~\eqref{eq:Qk} includes $i=j$, and this introduces,
from~\eqref{eq:233}, an extra $2z_j$, for each $j$. Therefore, proceeding as
in Section~\ref{sec:further-lower-order}, we get:
\begin{align} \nonumber
     c_r(k) &= \frac{(-1)^{k(k-1)/2}2^k}{k!} \frac{a_k}{(2\pi i)^k} \oint
     \ldots \oint \sum_{|\lambda| = r} b_\lambda(k) m_\lambda(z) \\
     \label{eq:beforepullingq} & \times \frac{\Delta(z_1, \ldots, z_k)
     \Delta(z_1^2, \ldots, z_k^2)}{\prod_{j=1}^k z_j^{2k - 1}}
     \exp(\sum_{j=1}^k z_j)dz_1 \ldots dz_k.
\end{align}
Analogously to~\eqref{eq:formula for c(r,k)}, we have
\begin{multline}
    \label{eq:formula for c(r,k) E}
    c_r(k) =
        2^k a_k
        \prod_{j=0}^{k-1}\frac{(2j)!}{(k+j-1)!}
    \\
    \times
    \sum_{|\lambda|=r}
    b_\lambda(k)
    \sum_{\bf u}{^{'}} (-1)^{n({\bf u})} E_{\alpha({\bf u})}(k)
    \times (k+i({\bf u})-2)_{u_{i({\bf u})}} (k+i({\bf u})-3)_{u_{i({\bf u})-1}}\dotsm(k-1)_{u_1}.
\end{multline}
where, for a partition $\alpha$,
\begin{equation}
    \label{eq:E_alpha}
     E_\alpha(k)=\det \left( \binom{2k-i-1-\alpha_{k-i+1}}{2k-2j}\right)_{1\leq i,j\leq k}.
\end{equation}
The equation for $c_r(k)$ differs from~\eqref{eq:formula for c(r,k)} in the power of $2$
that appears, and also some of the factorials have an extra $-1$ in them. The latter
comes from the one missing $z_j$ in the denominator of~\eqref{eq:beforepullingq} compared
to~\eqref{eq:Qk}.

Notice that $E_\alpha(k)$ is very similar to $D_\alpha(k)$. The only difference is the
extra $-1$ in the binomial coefficient. We can relate the two determinants by taking advantage
of the entries in the first column of the matrix for $E_\alpha(k)$, which are all 0 except
for the $1,1$ entry. Assume, for now, that $k\geq \max(l(\alpha)+1,\alpha_1)$
(so, in particular, $\alpha_k=0$).
Expanding along the first column, and then reindexing $i,j$ with $i+1,j+1$:
\begin{eqnarray}
    \label{eq:E vs D}
    E_\alpha(k) &=&
    \det \left( \binom{2k-i-1-\alpha_{k-i+1}}{2k-2j}\right)_{2\leq i,j\leq k} \\
    &=&
    \det \left( \binom{2(k-1)-i-\alpha_{(k-1)-i+1}}{2(k-1)-2j}\right)_{1\leq i,j\leq k-1}\\
    &=& D_\alpha(k-1) = 2^{ {k-1 \choose 2} - |\alpha|} \times \ P_\alpha(k-1)
\end{eqnarray}
Also note, while we have assumed that $k>l(\alpha)$, Corollary~\ref{cor:2} tells us
that $P_\alpha(k-1)$, and hence the rhs of~\eqref{eq:E vs D}, vanishes for
$\alpha_1+1 \leq k \leq l(\alpha)$. Furthermore, by the same
method as was used around~\eqref{eq:divisibility part ii},
\begin{equation}
    (k+i({\bf u})-2)_{u_{i({\bf u})}} (k+i({\bf u})-3)_{u_{i({\bf u})-1}}\dotsm(k-1)_{u_1}
\end{equation}
is divisible by $(k-1)\ldots (k-\alpha_1)$, and thus vanishes for $1 \leq k \leq  \alpha_1$.
Therefore, writing
\begin{multline}
    \label{eq:formula for c(r,k) E b}
    c_r(k) =
    2^{k+{k-1\choose 2}-r} a_k \prod_{j=0}^{k-1}\frac{(2j)!}{(k+j-1)!}
    \\
    \times
    \sum_{|\lambda|=r}
    b_\lambda(k)
    \sum_{\bf u}{^{'}} (-1)^{n({\bf u})} P_{\alpha({\bf u})}(k-1)
    \times (k+i({\bf u})-2)_{u_{i({\bf u})}} (k+i({\bf u})-3)_{u_{i({\bf u})-1}}\dotsm(k-1)_{u_1},
\end{multline}
we can replace the requirement that $k\geq \max(l(\alpha)+1,\alpha_1)$ with $k>0$. Finally,
using~\eqref{eq:N}, we get, for $k>0$,
\begin{equation}
    \label{eq:formula for c(r,k) E c}
    c_r(k) =
    2^{k+{k-1\choose 2}-r} a_k
        \prod_{j=0}^{k-1}\frac{(2j)!}{(k+j-1)!}
    \sum_{|\lambda|=r}
    b_\lambda(k) N_\lambda(k-1).
\end{equation}

For example, the $r=0$ term equals
\begin{equation}
    \label{eq:c0 E}
    c_0(k) =
    2^{k+{k-1 \choose 2}} a_k \prod_{j=0}^{k-1}\frac{(2j)!}{(k+j-1)!}.
\end{equation}
One can verify inductively that this matches the leading term as described in (1.5.26)
of~\cite{CFKRS}:
\begin{equation}
    \label{eq:check c0}
    2^{k+{k-1 \choose 2}} \prod_{j=0}^{k-1}\frac{(2j)!}{(k+j-1)!} =
    2^{(k+1)k/2} \prod_{j=0}^{k-1}\frac{j!}{(2j)!}.
\end{equation}

One should also pay attention here that the Taylor coefficients $b_\lambda(k)$, and also $a_k$,
depend on the underlying elliptic curve $E_{11a}$ and its $a(p)$'s. While we
can derive similar formulas for $b_\lambda(k)$ as for quadratic Dirichlet $L$-functions (see the
examples~\eqref{eq:b1},~\eqref{eq:b11}), in order to
accelerate their numerical evaluation we would need to use the symmetric power
$L$-functions associated to the $L$-function $L_{11}(s)$. Acceleration for
$b_\lambda(k)$ will be discussed in a forthcoming paper of Alderson and
Rubinstein~\cite{AR2}.

\begin{table}
  \centering
  \begin{tabular}{|r|c|}
    
\hline
Partition & $P_\lambda(k)$ \\
\hline       	
$ [1] $   &  $ k + 1 $   \\
$ [2] $   &  $ (1/2)  (k + 1)  (k + 2) $   \\
$ [1, 1] $   &  $ (1/2)  (k - 1)  (k + 2) $   \\
$ [3] $   &  $ (1/6)  (k + 1)  (k + 2)  (k + 3) $   \\
$ [2, 1] $   &  $ (1/3)  (k + 2)  (k^2 + k - 3) $   \\
$ [1, 1, 1] $   &  $ (1/6)  (k - 2)  (k - 1)  (k + 3) $   \\
$ [4] $   &  $ (1/24)  (k + 1)  (k + 2)  (k + 3)  (k + 4) $   \\
$ [3, 1] $   &  $ (1/8)  (k + 2)  (k + 3)  (k^2 + k - 4) $   \\
$ [2, 2] $   &  $ (1/12)  (k - 2)  (k + 1)  (k + 2)  (k + 3) $  \\
$ [2, 1, 1] $   &  $ (1/8)  (k - 2)  (k + 3)  (k^2 + k - 4) $  \\
$ [1, 1, 1, 1] $   &  $ (1/24)  (k - 3)  (k - 2)  (k - 1)  (k + 4) $   \\
$ [5] $   &  $ (1/120)  (k + 1)  (k + 2)  (k + 3)  (k + 4)  (k + 5) $   \\
$ [4, 1] $   &  $ (1/30)  (k + 2)  (k + 3)  (k + 4)  (k^2 + k - 5) $   \\
$ [3, 2] $   &  $ (1/24)  (k + 1)  (k + 2)  (k + 3)  (k^2 + k - 8) $   \\
$ [3, 1, 1] $   &  $ (1/20)  (k + 3)  (k^4 + 2k^3 - 11k^2 - 12k + 40) $   \\
$ [2, 2, 1] $   &  $ (1/24)  (k - 2)  (k + 1)  (k + 3)  (k^2 + k - 8) $   \\
$ [2, 1, 1, 1] $   &  $ (1/30)  (k - 3)  (k - 2)  (k + 4)  (k^2 + k - 5) $   \\
$ [1, 1, 1, 1, 1] $   &  $ (1/120)  (k - 4)  (k - 3)  (k - 2) (k - 1)  (k + 5) $   \\
$ [6] $   &  $ (1/720)  (k + 1)  (k + 2)  (k + 3)  (k + 4)  (k + 5)  (k + 6) $   \\
$ [5, 1] $   &  $ (1/144)  (k - 2)  (k + 2)  (k + 4)  (k + 5) (k + 3)^2 $   \\
$ [4, 2] $   &  $ (1/80)  (k + 1)  (k + 2)  (k + 3)  (k + 4) (k^2 + k - 10) $   \\
$ [4, 1, 1] $   &  $ (1/72)  (k + 3)  (k + 4)  (k^4 + 2k^3 - 13k^2 - 14k + 60) $   \\
$ [3, 3] $   &  $ (1/144)  (k - 3)  (k + 1)  (k + 3)  (k + 4) (k + 2)^2 $   \\
$ [3, 2, 1] $   &  $ (1/45)  (k + 1)  (k + 3)  (k^4 + 2k^3 - 16k^2 - 17k + 75) $   \\
$ [3, 1, 1, 1] $   &  $ (1/72)  (k - 3)  (k + 4)  (k^4 + 2k^3 - 13k^2 - 14k + 60) $   \\
$ [2, 2, 2] $   &  $ (1/144)  (k - 3)  (k - 2)  (k - 1)  (k + 2) (k + 3)  (k + 4) $   \\
$ [2, 2, 1, 1] $   &  $ (1/80)  (k - 3)  (k - 2)  (k + 1)  (k + 4)  (k^2 + k - 10) $   \\
$ [2, 1, 1, 1, 1] $   &  $ (1/144)  (k - 4)  (k - 3)  (k + 3) (k + 5)  (k - 2)^2 $   \\
$ [1, 1, 1, 1, 1, 1] $   &  $ (1/720)  (k - 5)  (k - 4)  (k - 3) (k - 2)  (k - 1)  (k + 6) $   \\
$ [7] $ & $ (1/5040)  (k + 1)  (k + 2)  (k + 3)  (k + 4)  (k + 5)  (k + 6)  (k + 7) $ \\
$ [6, 1] $ & $ (1/840)  (k + 2)  (k + 3)  (k + 4)  (k + 5)  (k + 6)  (k^2 + k - 7) $ \\
$ [5, 2] $ & $ (1/360)  (k - 3)  (k + 1)  (k + 2)  (k + 3)  (k + 5)  (k + 4)^2 $ \\
$ [5, 1, 1] $ & $ (1/336)  (k + 3)  (k + 4)  (k + 5)  (k^4 + 2k^3 - 15k^2 - 16k + 84) $ \\
$ [4, 3] $ & $ (1/360)  (k + 1)  (k + 3)  (k + 4)  (k + 2)^2 (k^2 + k - 15) $ \\
$ [4, 2, 1] $ & $ (1/144)  (k + 1)  (k + 3)  (k + 4)  (k^4 + 2k^3 - 19k^2 - 20k + 108) $ \\
$ [4, 1, 1, 1] $ & $ (1/252)  (k + 4)  (k^6 + 3k^5 - 26k^4 - 57k^3 + 277k^2 + 306k - 1260) $ \\
$ [3, 3, 1] $ & $ (1/240)  (k - 3)  (k + 1)  (k + 2)  (k + 3) (k + 4)  (k^2 + k - 10) $ \\
$ [3, 2, 2] $ & $ (1/240)  (k - 3)  (k - 1)  (k + 2)  (k + 3) (k + 4)  (k^2 + k - 10) $ \\
$ [3, 2, 1, 1] $ & $ (1/144)  (k - 3)  (k + 1)  (k + 4)  (k^4 + 2k^3 - 19k^2 - 20k + 108) $ \\
$ [3, 1, 1, 1, 1] $ & $ (1/336)  (k - 4)  (k - 3)  (k + 5)  (k^4 + 2k^3 - 15k^2 - 16k + 84) $ \\
$ [2, 2, 2, 1] $ & $ (1/360)  (k - 3)  (k - 2)  (k - 1)  (k + 2) (k + 4)  (k^2 + k - 15) $ \\
$ [2, 2, 1, 1, 1] $ & $ (1/360)  (k - 4)  (k - 2)  (k + 1)  (k + 4)  (k + 5)  (k - 3)^2 $ \\
$ [2, 1, 1, 1, 1, 1] $ & $ (1/840)  (k - 5)  (k - 4)  (k - 3) (k - 2)  (k + 6)  (k^2 + k - 7) $ \\
$ [1, 1, 1, 1, 1, 1, 1] $ & $ (1/5040)  (k - 6)  (k - 5)  (k - 4) (k - 3)  (k - 2)  (k - 1)  (k + 7) $ \\
\hline
  \end{tabular}
  \caption{Table of $P_\lambda(k)$}
  \label{tab:plambda}
\end{table}

\begin{table}

\begin{tabular}[t]{cc c c}
\toprule[2pt]
$r$ & $ c_-(r,1)$ & $ c_-(r,2)$& $ c_-(r,3)$ \\
\midrule
\begin{tabular}[t]{r}
  0\\ 1\\ 2\\ 3\\ 4\\ 5\\ 6
\end{tabular} &
  \begin{tabular}[t]{c}
3.522211004995827732e-01
 \\
6.175500336140218316e-01
  \end{tabular} &
  \begin{tabular}[t]{c}
  1.238375103096108452e-02
 \\
1.807468351186638511e-01
 \\
3.658991414081511628e-01
 \\
-1.398953902867718369e-01
  \end{tabular} &
  \begin{tabular}[t]{c}
1.528376099282021425e-05
 \\
8.968276397996084726e-04
 \\
1.701420175947633562e-02
 \\
1.093281830681910732e-01
 \\
1.358556940901993748e-01
 \\
-2.329509111366616925e-01
 \\
4.735303837788046866e-01
  \end{tabular}\\
\bottomrule[2pt]\\
$r$& $ c_-(r,4)$ & $ c_-(r,5)$& $ c_-(r,6)$ \\
\midrule \\
\begin{tabular}[t]{r}
  0\\ 1\\ 2\\ 3\\ 4\\ 5\\ 6\\7 \\ 8 \\ 9 \\ 10
\end{tabular} &
\begin{tabular}[t]{c}
3.158268332443340154e-10
 \\
5.062201340608140133e-08
 \\
3.252070477914552180e-06
 \\
1.065078255299183117e-04
 \\
1.865791348720969960e-03
 \\
1.658674128885722146e-02
 \\
5.985999910494527870e-02
 \\
5.231179842747744717e-03
 \\
-1.097356193524353096e-01
 \\
5.581253300381869842e-01
 \\
1.918594095122517496e-01
\end{tabular}
&
\begin{tabular}[t]{c}
6.712517611066278238e-17
 \\
2.341233253582258184e-14
 \\
3.571169234103129887e-12
 \\
3.127118490785452708e-10
 \\
1.734617312939144360e-08
 \\
6.342941105701246722e-07
 \\
1.541064437383931078e-05
 \\
2.441498848686470880e-04
 \\
2.390928284573956911e-03
 \\
1.275610736275904766e-02
 \\
2.430382016767882944e-02
\end{tabular} &
\begin{tabular}[t]{c}
1.036004645427003276e-25
 \\
6.796814066740219201e-23
 \\
2.037808336505920108e-20
 \\
3.698051408075659748e-18
 \\
4.534838798273249707e-16
 \\
3.972866885083416336e-14
 \\
2.563279107875100164e-12
 \\
1.237229229636910631e-10
 \\
4.491515829566301398e-09
 \\
1.222154548508955419e-07
 \\
2.461203700713661380e-06
\end{tabular}\\
\bottomrule[2pt]
$r$& $ c_-(r,7)$ & $c_-(r,8)$& $ c_-(r,9)$ \\
\midrule\\
\begin{tabular}[t]{r}
  0\\1\\2\\3\\4\\5\\6\\7\\8\\9\\10
\end{tabular}
&
\begin{tabular}[t]{c}
8.864927187204894781e-37
 \\
9.894437508330137269e-34
 \\
5.176293026015439716e-31
 \\
1.686724585610585967e-28
 \\
3.837267516078630273e-26
 \\
6.474635477336820480e-24
 \\
8.402114103039537077e-22
 \\
8.581764459399681586e-20
 \\
7.002464589632248733e-18
 \\
4.607034349981096374e-16
 \\
2.455973970379903840e-14
\end{tabular} &
\begin{tabular}[t]{c}
3.372009502181036150e-50
 \\
5.951191608649093822e-47
 \\
5.002043249634522587e-44
 \\
2.664702289380503418e-41
 \\
1.010164553397544484e-38
 \\
2.900498887294046119e-36
 \\
6.555588245821587108e-34
 \\
1.196609980002393296e-31
 \\
1.795828629692653400e-29
 \\
2.244368542496810519e-27
 \\
2.357312576663548340e-25
\end{tabular} &
\begin{tabular}[t]{c}       	
4.727735796587526113e-66
 \\
1.248019487993274422e-62
 \\
1.585820955757896443e-59
 \\
1.291823649274241834e-56
 \\
7.580660624239738211e-54
 \\
3.413900516458523702e-51
 \\
1.227404779731471396e-48
 \\
3.618608212113140382e-46
 \\
8.916974338520402569e-44
 \\
1.862786263819570034e-41
 \\
3.334524507937658586e-39
\end{tabular}\\
\bottomrule[2pt]
\end{tabular}

\caption[$c_-(r,k)$]
{The coefficients $ c_-(r,k)$ of $Q_-(k)$.
}\label{tab:cminusrk}
\end{table}

\begin{table}
  \centering

  \begin{tabular}{cccc}

\toprule[2pt]\\
$r$ & $ c_+(r,1)$ & $ c_+(r,2)$& $ c_+(r,3)$ \\
\midrule\\
\begin{tabular}[t]{r}
0\\  1\\ 2\\ 3\\ 4\\ 5\\ 6\
\end{tabular}
&
    \begin{tabular}[t]{c}	
3.522211004995827732e-01
 \\
-4.889851881547797041e-01
    \end{tabular} &
    \begin{tabular}[t]{c}
  1.238375103096108452e-02
 \\
6.403273133040673915e-02
 \\
-4.030985462971436450e-01
 \\
8.784723252866324383e-01       	
   \end{tabular} &
    \begin{tabular}[t]{c}
 1.528376099282021425e-05
 \\
6.087355322740111135e-04
 \\
5.189536257221761054e-03
 \\
-2.070416696161206729e-02
 \\
-4.836560144295628388e-02
 \\
6.305676273169569246e-01
 \\
-1.231149543676485214
  \end{tabular}\\

\bottomrule[2pt]\\
$r$ & $c_+(r,4)$ & $ c_+(r,5)$& $c_+(r,6)$ \\
\toprule\\

\begin{tabular}[t]{r}
0\\  1\\ 2\\ 3\\ 4\\ 5\\ 6\\ 7 \\ 8 \\ 9 \\ 10
\end{tabular}
&

\begin{tabular}[t]{c}
 3.158268332443340154e-10
 \\
4.070002081481211197e-08
 \\
1.961035634727995841e-06
 \\
4.187933734218812260e-05
 \\
3.233832982317403053e-04
 \\
-7.264209058002128044e-04
 \\
-9.741303115420443803e-03
 \\
6.254058547607513341e-02
 \\
5.338039400180279170e-02
 \\
-1.125787514381924481e+00
 \\
2.125417457224375362
\end{tabular} &

\begin{tabular}[t]{c}
6.712517611066278238e-17
 \\
2.024913313371989448e-14
 \\
2.611003455556346309e-12
 \\
1.870888923760240058e-10
 \\
8.086250862410257040e-09
 \\
2.126496335543600159e-07
 \\
3.194157049041922835e-06
 \\
2.120198748289444789e-05
 \\
-3.390055513847315853e-05
 \\
-7.750613901748660065e-04
 \\
3.339978554290242568e-03
\end{tabular} &
\begin{tabular}[t]{c}
1.036004645427003276e-25
 \\
6.113326104276961713e-23
 \\
1.632224321325099403e-20
 \\
2.605311255686981285e-18
 \\
2.766415183453526818e-16
 \\
2.056437432501927988e-14
 \\
1.095709499896029594e-12
 \\
4.206172871179562219e-11
 \\
1.149109718292255815e-09
 \\
2.154509460431619112e-08
 \\
2.543371224701971233e-07
\end{tabular} \\

\bottomrule[2pt]\\
$r$ & $ c_+(r,7)$ & $ c_+(r,8)$& $ c_+(r,9)$ \\
\midrule\\
\begin{tabular}[t]{r}
0  \\1\\ 2\\ 3\\ 4\\ 5\\ 6\\ 7 \\ 8 \\ 9 \\ 10
\end{tabular}
&
\begin{tabular}[t]{c}
 8.864927187204894781e-37
 \\
9.114637784804059894e-34
 \\
4.370089613567423486e-31
 \\
1.297363094463138851e-28
 \\
2.670392092372496088e-26
 \\
4.043466811338890795e-24
 \\
4.663148139710778893e-22
 \\
4.183154331210266578e-20
 \\
2.954857264190019988e-18
 \\
1.652770327042906306e-16
 \\
7.319238365079051443e-15
\end{tabular} &
\begin{tabular}[t]{c}
3.372009502181036150e-50
 \\
5.569826318573164385e-47
 \\
4.368642207198861832e-44
 \\
2.164658555649376388e-41
 \\
7.604817314362535383e-39
 \\
2.015327809331532264e-36
 \\
4.184593239584908611e-34
 \\
6.980465161514108456e-32
 \\
9.516651650236242059e-30
 \\
1.073015400698217206e-27
 \\
1.008662233782716849e-25
\end{tabular} &
\begin{tabular}[t]{c}
 4.727735796587526113e-66
 \\
1.181182697783246367e-62
 \\
1.417926553457661234e-59
 \\
1.089051480593133551e-56
 \\
6.012641112088390226e-54
 \\
2.541594397695401893e-51
 \\
8.555207141044511720e-49
 \\
2.354807833463352272e-46
 \\
5.400892227120418237e-44
 \\
1.046573394851932219e-41
 \\
1.731269798305270612e-39
\end{tabular}\\
\bottomrule[2pt]

  \end{tabular}
  \caption{The coefficients $c_+(r,k)$ of $Q_+(r,k)$.}
  \label{tab:cplusrk}
\end{table}

\vspace*{20mm}
\bibliography{mybibliography}{}

\newcommand{\etalchar}[1]{$^{#1}$}
\newcommand{\noopsort}[1]{} \newcommand{\singleletter}[1]{#1}
\providecommand{\bysame}{\leavevmode\hbox to3em{\hrulefill}\thinspace}
\providecommand{\MR}{\relax\ifhmode\unskip\space\fi MR }
\providecommand{\MRhref}[2]{%
  \href{http://www.ams.org/mathscinet-getitem?mr=#1}{#2}
}
\providecommand{\href}[2]{#2}
\begin{thebibliography}{CPRW07}

\bibitem[AR]{AR}
M.~Alderson and M.O. Rubinstein, \emph{Conjectures and experiments concerning
  the moments of $L(1/2,\chi_d)$}, to appear in Experimental Mathematics.

\bibitem[AR2]{AR2}
\bysame, \emph{Moments of quadratic twists of elliptic curve $L$-functions},
  (draft).

\bibitem[BK]{BK}
H.M. Bui and J.P. Keating, \emph{On the mean values of $L$-functions in
  orthogonal and symplectic families}, Proc. London. Math. Soc. \textbf{96}
  (2008), 335--366.

\bibitem[CF]{CF}
J.B. Conrey and D.W. Farmer, \emph{Mean values of ${L}$--functions and
  symmetry}, Internat. Math. Res. Notices \textbf{17} (2000), 883--908.

\bibitem[CFKRS]{CFKRS}
J.B. Conrey, D.W. Farmer, J.P. Keating, M.~O. Rubinstein, and N.C. Snaith,
  \emph{Integral moments of {$L$}-functions}, Proc. Lond. Math. Soc.
  \textbf{91} (2005), 33--104.

\bibitem[CFKRS2]{CFKRS2}
J.~B. Conrey, D.~Farmer, J.~P. Keating, M.~O. Rubinstein, and N.~C. Snaith,
  \emph{{L}ower order terms in the full moment conjecture for the {R}iemann
  zeta function}, Journal of Number Theory \textbf{128} (2008), 1516--1554.

\bibitem[CPRW]{CPRW}
J.~Brian Conrey, Atul Pokharel, Michael~O. Rubinstein, and Mark Watkins,
  \emph{Secondary terms in the number of vanishings of quadratic twists of
  elliptic curve {$L$}-functions}, Ranks of elliptic curves and random matrix
  theory, London Math. Soc. Lecture Note Ser., vol. 341, Cambridge Univ. Press,
  Cambridge, 2007, pp.~215--232.

\bibitem[D]{D}
P.-O. Dehaye, \emph{Combinatorics of lower order terms in the moment
  conjectures for the Riemann zeta function}, (arxiv preprint).

\bibitem[DGH]{DGH}
Adrian Diaconu, Dorian Goldfeld, and Jeffrey Hoffstein, \emph{Multiple
  {D}irichlet series and moments of zeta and {$L$}-functions}, Compositio Math.
  \textbf{139} (2003), 297--360.

\bibitem[Dy]{Dy}
F.J. Dyson, \emph{Statistical theory of energy levels of complex systems, {I},
  {II} and {III}}, J. Math. Phys. \textbf{3} (1962), 140--156, 157--165,
  166--175.

\bibitem[GH]{GH}
Dorian Goldfeld and Jeffrey Hoffstein, \emph{Eisenstein series of {${1\over
  2}$}-integral weight and the mean value of real {D}irichlet {$L$}-series},
  Invent. Math. \textbf{80} (1985), no.~2, 185--208.

\bibitem[GHK]{GHK}
S.M. Gonek, C.P. Hughes, and J.P. Keating, \emph{A hybrid {E}uler-{H}adamard
  product formula for the {R}iemann zeta function}, Duke Math. J. \textbf{136}
  (2007), no.~3, 507--550.

\bibitem[J]{J}
M.~Jutila, \emph{On the mean value of {$L({1\over 2},\,\chi )$} for real
  characters}, Analysis \textbf{1} (1981), no.~2, 149--161.

\bibitem[KS]{KS}
N.M. Katz and P.~Sarnak, \emph{Random {M}atrices, {F}robenius {E}igenvalues,
  and {M}onodromy}, AMS Colloquium Publications, 1999.

\bibitem[KS2]{KS2}
\bysame, \emph{Zeros of zeta functions and symmetry}, Bull. Amer. Math. Soc.
  \textbf{36} (1999), 1--26.

\bibitem[KeS]{KeS}
J.P. Keating and N.C. Snaith, \emph{Random matrix theory and $\zeta(1/2+it)$},
  Comm. Math. Phys. \textbf{214} (2000), 57--89.

\bibitem[KeS2]{KeS2}
\bysame, \emph{{\noopsort{z}}{R}andom matrix theory and ${L}$--functions at
  $s=1/2$}, Comm. Math. Phys. \textbf{214} (2000), 91--110.

\bibitem[KZ]{KZ}
W.~Kohnen and D.~Zagier, \emph{Values of {$L$}-series of modular forms at the
  center of the critical strip}, Invent. Math. \textbf{64} (1981), no.~2,
  175--198.

\bibitem[M]{macdonald_symmetric_1995}
I.~G. Macdonald, \emph{{Symmetric functions and Hall polynomials}}, second ed.,
  Oxford Mathematical Monographs, The Clarendon Press Oxford University Press,
  New York, 1995, With contributions by A. Zelevinsky, Oxford Science
  Publications.

\bibitem[Mo]{M}
H.L. Montgomery, \emph{The pair correlation of zeros of the zeta function},
  Analytic Number Theory: Proceedings of Symposia in Pure Mathematics {\bf 24},
  Amer. Math. Soc., 1973, pp.~181--193.

\bibitem[Mot]{Mot}
Yoichi Motohashi, \emph{Spectral theory of the {R}iemann zeta-function},
  Cambridge Tracts in Mathematics, vol. 127, Cambridge University Press,
  Cambridge, 1997. \MR{1489236 (99f:11109)}

\bibitem[O]{O}
A.M. Odlyzko, \emph{The $10^{20}$-th zero of the {R}iemann zeta function and
  175 million of its neighbors}, (preprint), 1997.

\bibitem[RS]{RS}
Z.~Rudnick and P.~Sarnak, \emph{Zeros of principal ${L}$--functions and random
  matrix theory}, Duke Math. J. \textbf{81} (1996), 269--322.

\bibitem[R]{R}
M.O. Rubinstein, \emph{Low-lying zeros of {$L$}-functions and random matrix
  theory}, Duke Math. J. \textbf{109} (2001), 147--181.

\bibitem[R2]{R2}
M.~Rubinstein, \emph{Computational methods and experiments in analytic number
  theory}, Recent Perspectives in Random Matrix Theory and Number Theory
  (F.~Mezzadri and N.C. Snaith, eds.), London Mathematical Society Lecture Note
  Series, vol. 322, 2005, pp.~425--506.

\bibitem[RY]{RY}
M.O. Rubinstein and S.~Yamagishi, \emph{Computing the moment polynomials of the
  zeta function}, (arxiv preprint).

\bibitem[S]{Sta99}
Richard~P. Stanley, \emph{{E}numerative {C}ombinatorics}, vol.~2, Cambridge
  University Press, 1999.

\bibitem[T]{T}
E.C. Titchmarsh, \emph{The {T}heory of the {R}iemann {Z}eta-{F}unction}, 2nd
  ed., Oxford Science Publications, 1986, Revised by D.R. Heath-Brown.

\bibitem[Z]{Z}
Qiao Zhang, \emph{On the cubic moment of quadratic {D}irichlet
  {$L$}-functions}, Math. Res. Lett. \textbf{12} (2005), 413--424.

\end{thebibliography}
\bibliographystyle{amsalpha}
\vspace*{20mm}

\end{document}